\newtheorem{theorem}{Theorem}
\newtheorem*{theorem*}{Theorem}
\newtheorem{definition}[theorem]{Definition}
\newtheorem{lemma}[theorem]{Lemma}
\newtheorem{proposition}[theorem]{Proposition}
\newtheorem{corollary}[theorem]{Corollary}
\newtheorem{example}[theorem]{Example}
\theoremstyle{definition}
\newtheorem{remark}[theorem]{Remark}
\newcommand{\oo}{{\mathbb{O}}}
\newcommand{\hh}{{\mathbb{H}}}
\newcommand{\HH}{{\mathbb{H}}}
\newcommand{\cc}{{\mathbb{C}}}
\newcommand{\rr}{{\mathbb{R}}}
\newcommand{\nn}{{\mathbb{N}}}
\newcommand{\qq}{{\mathbb{Q}}}
\newcommand{\s}{{\mathbb{S}}}
\newcommand{\dB}{\overline\partial_{\B}}
\newcommand{\GB}{\Gamma_\B}
\newcommand{\B}{\mathcal{B}}
\newcommand{\sr}{\mathcal{SR}}
\newcommand{\srl}{\mathcal{SR}_{loc}}
\newcommand{\SC}{\mathcal{SC}}
\newcommand{\SL}{\mathcal{S}}
\newcommand{\I}{\mathcal{I}}
\newcommand{\dibar}{\overline\partial}
\newcommand{\dcf}{\dibar}
\newcommand{\dif}{\vartheta_\B}
\newcommand{\difbar}{\overline{\vartheta}_\B}
\newcommand\IM{\operatorname{Im}}
\newcommand\RE{\operatorname{Re}}
\newcommand\dds[1]{\partial_{#1}}
\newcommand\vs[1]{{#1}_s^\circ}
\newcommand\sd[1]{{#1}'_s}
\newcommand\Span{\operatorname{span}}
\newcommand{\ui}{\imath}
\newcommand{\OO}{\Omega}
\newcommand{\mbb}{\mathbb}
\newcommand{\R}{\mbb{R}}
\newcommand{\mc}{\mathcal}
\newcommand\ddd[2]{\frac{\partial#1}{\partial#2}} 
 \newcommand{\C}{\mbb{C}}
 \newcommand{\Q}{\mc{Q}}
 \def\SS{\mathbb S}
 \newcommand\BB{\mathbb B}
 \newcommand{\bc}{\begin{center}}
 \newcommand{\ec}{\end{center}}
\newcommand{\Rn}{\mathbb R_n}
\newcommand{\cS}{c_m}
\newcommand{\Tdd}{\mathcal D_{\,\dibar\Delta}}
\newcommand{\Fdd}{\mathcal F_{\,\dibar\Delta}}
\newcommand{\AM}{\mathcal{AM}}
\begin{document}

\title{Cauchy-Riemann operators and local slice analysis over real alternative algebras}

\author{Alessandro Perotti}
\email{alessandro.perotti@unitn.it}
\address{Department of Mathematics, University of Trento, Via Sommarive 14, Trento Italy}

\begin{abstract}
We prove some formulas relating Cauchy-Riemann operators defined on hypercomplex subspaces of an alternative *-algebra to a differential operator associated with the concept of slice-regularity and to the spherical Dirac operator. These results in particular allow to introduce a definition of locally slice-regular function and open the path for local slice analysis. Since Cauchy-Riemann operators factor the corresponding Laplacian operators, the proven formulas let us also obtain several results about the harmonicity and polyharmonicity properties of slice-regular functions. 
\end{abstract}

\keywords{
Cauchy-Riemann operator; Slice-regular functions; Functions of  hypercomplex variable; Quaternions; Clifford algebras; Octonions; Real alternative algebras}
\subjclass[2010]{Primary 30G35; Secondary 35J91, 31B05, 31B30}

\maketitle

\section{Introduction}
The first aim of this work is to develop a set of relations between two higher dimensional function theories (or better two families of function theories). These theories were born as different generalizations of that fundamental part of modern mathematics represented by the theory of holomorphic functions of one complex variable.  One function theory, which is well developed and dates back to the 1930’s, is based on a system of linear first-order constant coefficients differential equations, known as the Cauchy–Fueter system in the case of quaternions, and as the Dirac (or Cauchy-Riemann) system in the case of Clifford algebras. The functions in the kernel of these systems are called, respectively, Fueter-regular and monogenic functions. We refer the reader to the monographs \cite{BDS,GHS} for extended accounts of this theory. 

Over the last fifteen years, another higher dimensional function theory, better adapted to algebraic requirements, in particular the inclusion of polynomials, was developed. 
Born in the quaternionic setting \cite{GeSt2006CR,GeSt2007Adv}, the theory of \emph{slice-regular functions}, also called \emph{slice analysis}, has then been  extended to the octonions,  to Clifford algebras, and more generally to real alternative *-algebras (see, e.g., \cite{CoSaSt2009Israel,GeStRocky,AIM2011}). See also \cite{GeStoSt2013,Struppa2015} for reviews of this function theory and extended references. 

The relations between the two theories we are presenting here are based on some fundamental formulas linking  Cauchy-Riemann operators, 
which are at the core of the first function theory, and two other differential operators related to slice-regularity and to the spherical derivative of a slice function (Theorem \ref{teo:difference} and Proposition \ref{pro:gamma_slice}). 
A natural setting where Cauchy-Riemann operators can be defined are \emph{hypercomplex subspaces} of the algebra. 
Relevant examples of hypercomplex subspaces of an alternative *-algebra are: the quaternionic space $\hh$, the space $\oo$ of octonions, 
the reduced quaternions and more generally the paravector subspace of the real Clifford algebra $\R_n$ of signature $(0,n)$.

One of the peculiar aspects of slice analysis 
is the non-local character of the existing definitions of slice-regular function. Both in the original approach \cite{GeSt2007Adv}, with functions defined on \emph{slice domains}, 
and then on open sets intersecting the real axis, and in the \emph{stem function} approach \cite{AIM2011}, where functions are defined on domains that are axially symmetric around the real axis, a local definition of slice-regular function is not feasible. The above-mentioned relations with Cauchy-Riemann operators permit to refine the original definition of slice-regularity and find a formulation that has a natural local version. 
This is a second major aim of this work. We are able to give a definition of \emph{locally slice-regular function} (Definition\ \ref{def:ssr}) on any open subset of an hypercomplex subspace of the algebra that is compatible with the existing definitions. In the setting of slice analysis on hypercomplex subspaces, local slice-regularity can be seen as the most faithful generalization of the classic concept of holomorphy.

As an application of this local approach, we present some results of quaternionic \emph{local slice analysis}. In particular we give local versions of some differential topological properties of slice-regular functions. 
Of course this type of results is not restricted to the quaternionic setting. Thanks to a local extendibility theorem (Theorem \ref{thm:localext}), every local property satisfied by slice-regular functions, originally proved on axially symmetric domains, remains valid for locally slice-regular functions defined on any open subset of an hypercomplex subspace of the algebra. 

We also investigate harmonicity and polyharmonicity properties of slice-regular functions (Theorem \ref{thm:n_odd_even}), extending to any alternative algebra what already known for quaternions and Clifford algebras. 
We apply the results proved here to obtain a decomposition of any slice-regular function in terms of components in the kernel of the third order operator $\dibar\Delta$, where $\dibar$ is the Cauchy-Riemann operator of the hypercomplex subspace $M$ and $\Delta$ is the corresponding Laplacian (Theorem \ref{thm:dDeltaDec}). If the dimension $m+1$ of $M$ is even, this decomposition  defines a real-linear map that associates to any slice-regular function a $(m-1)/2$-tuple of axially monogenic functions (\emph{$\dibar\Delta$-Fueter mapping}). This map represents a new generalization of the quaternionic Fueter's Theorem (corresponding to the case $m=3$), valid on any hypercomplex subspace and gives another strong link between the function theories. We recall that when $M$ is the paravector subspace of $\R_m$, then the Sce's generalization \cite{Sce} of Fueter's Theorem  provides a mapping, namely the power $\Delta^{(m-1)/2}$ of the Laplacian, that associates to any slice-regular function a single axially monogenic function.  We prove that, differently from higher power of the $\Delta$ operator, the $\dibar\Delta$-Fueter mapping has a small kernel for every $m\ge3$ (Proposition\ \ref{pro:Kernel}). 

We describe in more detail the structure of the paper. 
In Section\ \ref{sec:pre}, we recall some basic definitions about real alternative *-algebras, slice functions and slice-regularity.  
In Section\ \ref{sec:CR_operators},  we give the definition of hypercomplex subspace $M$ in an alternative *-algebra $A$, and introduce the three fundamental differential operators on $M$: the Cauchy-Riemann operator $\dB$, the global operator $\difbar$ associated to slice-regularity and the spherical Dirac operator $\Gamma_\B$. Then we establish the main relation linking the three operators and relate $\Gamma_\B$ and $\dB$ to the slice and spherical derivatives of a slice function.   
Section\ \ref{sec:strong_local} is devoted to the definitions of \emph{strongly slice-regular function} and \emph{locally slice-regular function} and to proving two fundamental extendibility theorems: the global one and the corresponding local version. Subsection\ \ref{sec:quatlocal} contains some results of local slice analysis on the quaternionic algebra: the Quasi-open Mapping Theorem, the Mean value and Poisson formulas. 
In Section\ \ref{sec:Laplacian} we study the action of the Laplacian on slice-regular functions and show that every (locally) slice-regular function is $(m+1)/2$-polyharmonic, with $m+1$ the dimension of $M$. We then prove the $\dibar\Delta$-decomposition of a slice-regular function combining the classical Almansi's Theorem for polyharmonic functions with a polyharmonic zonal decomposition. By composition with the Laplacian operator we obtain the $\dibar\Delta$-Fueter mapping. 
In Subsection\ \ref{sec:different} we prove a second polyharmonic decomposition of slice-regular functions. We conclude the section with a list of examples of the previous decompositions in different settings: in a Clifford algebra, in the octonion space or in the space of reduced quaternions. The Appendix contains the proofs of Theorem\ \ref{teo:difference} and Proposition\ \ref{pro:powers}.

\section{Preliminaries}\label{sec:pre}

Let $A$ be a real algebra with unity $1\ne0$. The real multiples of $1$ in $A$ are identified with the field $\R$ of real numbers. Assume that $A$ is alternative, i.e., the \emph{associator} $(x,y,z) := (xy)z - x(yz)$ of three elements of $A$ is an alternating function of its arguments.
Alternativity yields the so-called Moufang identities:
\begin{eqnarray}
(xax)y &=& x(a(xy))\label{moufang1}\\
y(xax) &=& ((yx)a)x\label{moufang2}\\
(xy)(ax) &=& x(ya) x.\label{moufang3}
\end{eqnarray}
A theorem of E.\ Artin asserts that the subalgebra generated by any two elements of $A$ is associative. Assume that $A$ is a *-algebra, i.e., it is equipped with a real linear anti-involution $A\to A$, $x\mapsto x^c$, such that $(xy)^c=y^cx^c$ for all $x,y\in A$ and  $x^c=x$ for $x$ real. Let $t(x):=x+x^c\in A$ be the \emph{trace} of $x$ and $n(x):=xx^c\in A$  the \emph{(squared) norm} of $x$. 
Let
\[
\s_A:=\{J\in A : t(x)=0,\ n(x)=1\}
\]
be the `sphere' of the imaginary units of $A$ compatible with the *-algebra structure of $A$. 
Assuming $\s_A\ne\emptyset$, one can consider the \emph{quadratic cone} of $A$ (see \cite[Def.~3]{AIM2011}), defined as the subset of $A$
\[
Q_A:=\bigcup_{J\in \s_A}\C_J,
\]
where $\C_J=\Span(1,J)$ is the complex `slice' of $A$ generated by $1$ and $J$. It holds $\C_J\cap\C_K=\R$ for each $J,K\in\s_A$ with $J\ne\pm K$. The quadratic cone is a real cone invariant w.r.t.\ translations along the real axis. 
Observe that $t$ and $n$ are real-valued on $Q_A$. 
Moreover, it holds
\[
Q_A=\R\cup\{x\in A\setminus\R: t(x)\in\R,n(x)\in\R,4n(x)>t(x)^2\}.
\]
Each element $x$ of $Q_A$ can be written as $x=\RE(x)+\IM(x)$, with $\RE(x)=\frac{x+x^c}2$, $\IM(x)=\frac{x-x^c}2=\beta J$, where $\beta=\sqrt{n(\IM(x))}\geq0$ and $J\in\s_A$, with unique choice of $\beta\geq0$ and $J\in\s_A$ if $x\not\in\R$.

Observe that the quadratic cone $Q_A$ is properly contained in $A$ unless $A$ is isomorphic as a real *-algebra to one of the division algebras $\C,\HH,\mathbb O$ with the standard conjugations (see \cite[Prop.~1]{AIM2011}).
We refer to \cite{GhPe_Trends}, \cite[\S2]{AIM2011} and \cite[\S1]{AlgebraSliceFunctions} for more details and examples about real alternative *-algebras and their quadratic cones.


\subsection{Slice functions and slice-regular functions}

The functions on $A$ which are compatible with the slice character of the quadratic cone are called \emph{slice functions}. More precisely, let $A\otimes_{\R}\C$ be the complexified algebra, whose elements $w$ are of the form $w=a+\ui b$ with $a,b\in A$ and $\ui^2=-1$. In $A\otimes_{\R}\C$ we consider the complex conjugation mapping $w=a+\ui b$ to $\overline w=a-\ui b$  for all $a,b\in A$. Let $D$ be a subset of $\C$ that is invariant w.r.t.\ complex conjugation. 
If a function $F: D \to A\otimes_{\R}\C$ satisfies  $F(\overline z)=\overline{F(z)}$ for every $z\in D$, then $F$  is called a \emph{stem function} on $D$. For every $J\in\s_A$, we define the *-algebra isomorphism $\phi_J:\C\to\C_J$ by setting
\[
\phi_J(\alpha+i\beta):=\alpha+J\beta\text{\quad for all $\alpha,\beta\in\R$}.
\]
Let $\OO_D$ be the \emph{axially symmetric} (or \emph{circular}) subset of the quadratic cone defined by 
\[
\OO_D=\bigcup_{J\in\s_A}\phi_J(D)=\{\alpha+J\beta\in
A : \alpha,\beta\in\R, \alpha+i\beta\in D,J\in\s_A\}.
\]
An axially symmetric connected set $\OO=\OO_D$ is called a \emph{slice domain} if $\OO\cap \R\ne\emptyset$, a \emph{product domain} if $\OO\cap \R=\emptyset$. Any axially symmetric open set is union of a family of domains of these two types. 

The stem function $F=F_1+\ui F_2:D \to A\otimes_{\R}\C$  induces the \emph{(left) slice function} $f=\I(F):\OO_D \to A$ in the following way: if $x=\alpha+J\beta =\phi_J(z)\in \OO_D\cap \C_J$, then  
\[ f(x)=F_1(z)+JF_2(z),\text{\quad where $z=\alpha+i\beta$}. 
\]

Suppose that $D$ is open.  The slice function $f=\I(F):\OO_D \to A$ is called \emph{(left) slice-regular} if $F$ is holomorphic w.r.t.\ the complex structure on $A\otimes_{\R}\C$ defined by left multiplication by $\ui$. 
We will denote by $\SL^1(\OO)$ the real vector space of slice functions induced by stem functions of class $\mathcal C^1$ on $\OO$ and by $\sr(\OO)$ the vector subspace of slice-regular functions on $\OO$. 
For example, polynomial functions $f(x)=\sum_{j=0}^d x^ja_j$ and convergent power series with right coefficients in $A$ are slice-regular.
If $A=\hh$ and $\OO_D$ is a slice domain, this definition of slice regularity is equivalent to the original one proposed by Gentili and Struppa in \cite{GeSt2007Adv}. 

To any slice function $f=\I(F):\OO_D \to A$, one can associate the function $\vs f:\OO_D \to A$, called \emph{spherical value} of $f$, and the function $f'_s:\OO_D \setminus \R \to A$, called  \emph{spherical derivative} of $f$, defined as
\[
\vs f(x):=\frac{1}{2}(f(x)+f(x^c))
\quad \text{and} \quad
f'_s(x):=\frac{1}{2}\IM(x)^{-1}(f(x)-f(x^c)).
\] 
If $x=\alpha+\beta J\in\OO_D$ and $z=\alpha+i\beta\in D$, then $\vs f(x)=F_1(z)$ and $f'_s(x)=\beta^{-1} F_2(z)$. Therefore $\vs f$ and $f'_s$ are slice functions, constant on every set $\s_x:=\alpha+\beta\,\s_A$. They are slice regular only if $f$ is locally constant.
Moreover, the formula
\[
f(x)=\vs f(x)+\IM(x)f'_s(x)
\]
holds for all $x\in\OO_D\setminus \R$. In the case $A=\HH$, a remarkable property of the spherical derivative of a slice-regular function is its harmonicity in the four real variables. In the higher dimensional case, a polyharmonicity property holds (see \S\ref{sec:CR_operators}).

In general, the pointwise product $x \mapsto f(x)g(x)$ of slice functions $f=\I(F)$ and $g=\I(G)$ is not a slice function. On the other hand, it is easy to verify that the pointwise product $FG$ of stem functions $F$ and $G$ is again a stem function. 
The \emph{slice product} of two slice functions $f=\I(F)$, $g=\I(G)$ on $\OO=\OO_D$ is defined by means of the pointwise product of the stem functions:
\[f\cdot g=\I(FG).\]
In the case of slice-regular functions, this product is also called \emph{star product} of $f$ and $g$, denoted by $f\ast g$.

The function $f=\I(F)$ is called \emph{slice-preserving} if the $A$-components $F_1$ and $F_2$ of the stem function $F$ are real-valued.
If $f$ is slice-preserving, then $f\cdot g$ coincides with the pointwise product of $f$ and $g$. 
If $f,g$ are slice-regular on $\OO$, then also their slice product $f\cdot g$ is slice-regular on $\OO$. 

The \emph{slice derivatives} $\dd{f}{x},\dd{f\;}{x^c}$ of a slice functions $f=\I(F)$ are defined by means of the  Cauchy-Riemann operators applied to the inducing stem function $F$:
\[\dd{f}{x}=\I\left(\dd{F}{z}\right),\quad \dd{f\;}{x^c}=\I\left(\dd{F}{\overline z}\right).\]
It follows that $f$ is slice-regular if and only if $\dd{f\;}{x^c}=0$ and if $f$ is slice-regular on $\OO$ then also $\dd{f}{x}$ is slice-regular on $\OO$. Moreover, the slice derivatives satisfy the Leibniz product rule w.r.t.\ the slice product. If $f=\dd g x$, we will say that $g$ is a \emph{slice primitive} of $f$.

A slice-regular function $f$ is called \emph{slice-constant} if $\dd{f}x=0$ on $\OO$. 
We will denote by $\SC(\OO)$ the real vector space of slice-constant functions on $\OO$. If $\OO$ is a slice domain, then every $f\in\SC(\OO)$ is a  constant function. If $\OO$ is a product domain, then other possibilities arise (see e.g.\ \cite[Remark 12]{AIM2011}). Observe that $f\in\SC(\OO)$ if and only if $f$ is locally constant on slices $\OO\cap\cc_J$ or, equivalently, $f$ is induced by a locally constant stem function. 

We refer the reader to \cite[\S3,4]{AIM2011} for more properties of slice functions and slice regularity.

\section{Cauchy-Riemann operators on hypercomplex subspaces}\label{sec:CR_operators}

We recall a concept introduced in \cite{VolumeCauchy}.
A non--empty subset $S$ of $A$ is a \emph{genuine imaginary sphere of $A$} (a \emph{gis} of $A$), if there exists a real vector subspace $M$ of $A$ such that $\R \subset M \subset \Q_A$ and $S=M \cap \s_A$. If such a $M$ exists, then it is unique, since $M=\bigcup_{J \in S}\C_J$. We call $M$ the \textit{vector subspace inducing $S$}. If $\dim(M)>2$, such a subspace $M$ of $A$ will be called a \emph{hypercomplex subspace of $A$}. 

For example, if $A$ is the skew-field $\hh$ of quaternions or the space $\oo$ of octonions, then $A$ itself is an hypercomplex subspace, with $S=\s_A$. These examples are the unique alternative algebras, except for $\cc$, which have quadratic cone $\Q_A$ equal to the whole algebra $A$ \cite[Prop.1]{AIM2011}. The algebra $\hh$ contains also the hypercomplex subspace of \emph{reduced quaternions} $\hh_r=\{x=x_0+ix_1+jx_2\in\hh\;|\; x_0,x_1,x_2\in\R\}$. More generally, the space $M=\rr^{n+1}$ of \emph{paravectors} in the real Clifford algebra $\R_n$ of signature $(0,n)$ is an hypercomplex subspace of $\Rn$.  

It is easy to verify that every hypercomplex subspace $M$ of $A$ is, in particular, a \emph{strong regular quadratic cone} in $A$, as defined in \cite{RenWangXu}.

\begin{lemma}\cite[Lemma 1.4]{VolumeCauchy} \label{lem:gis}
Let $S$ be a gis of $A$ and let $M$ be the vector subspace inducing $S$. Then there exists a norm $\|\ \|$ on $A$ such that $\|x\|^2=n(x)\ \forall x \in M$.
\end{lemma}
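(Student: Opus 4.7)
The plan is to first show that $\sqrt{n}$ is a Euclidean norm on $M$ (i.e. comes from a positive-definite inner product), and then to extend it to an arbitrary norm on $A$. The heart of the argument is establishing that $n|_M$ is a positive-definite real quadratic form, which amounts to verifying: (a) $n(x)\geq 0$ on $M$ with equality only at $x=0$; and (b) the polarization $b(x,y):=\tfrac{1}{2}(xy^c+yx^c)$ takes real values on $M\times M$.

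Part (a) is immediate: since $M\subset Q_A$, every $x\in M$ has the form $x=\alpha+\beta J$ with $\alpha,\beta\in\R$ and $J\in\s_A$, so $n(x)=\alpha^2+\beta^2$, which vanishes only at $x=0$. For (b), the form $b$ is $\R$-bilinear and symmetric by construction, and $b(x,x)=n(x)$. Using the splitting $M=\R\oplus\IM(M)$, where $\IM(M):=\{x\in M:t(x)=0\}$, together with the observation that every nonzero element of $\IM(M)$ is a real multiple of some $J\in S$ (namely $J=u/\sqrt{n(u)}$, which lies in $M\cap\s_A=S$), it suffices to verify $b(J,K)\in\R$ for $J,K\in S$. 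This is the key calculation: since $\C_J\cup\C_K\subset M$ and $M$ is a real subspace, $J+K\in M\subset Q_A$, and $t(J+K)=0$ gives $(J+K)^c=-(J+K)$, so $(J+K)^2=-n(J+K)\in\R$. By Artin's theorem applied to the subalgebra generated by $J$ (and by $K$), one has $J^2=K^2=-1$, whence $JK+KJ=(J+K)^2+2\in\R$. Finally, $b(J,K)=-\tfrac{1}{2}(JK+KJ)\in\R$ because $J^c=-J$, $K^c=-K$.

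Combining (a) and (b), $b$ is a positive-definite symmetric bilinear form on $M$, and $\|x\|_M:=\sqrt{n(x)}$ is a Euclidean norm on $M$ with $\|x\|_M^2=n(x)$. To produce a norm on all of $A$, pick any real linear complement $N$ of $M$ in $A$ and any norm $\|\cdot\|_N$ on $N$; then $\|x+y\|:=\|x\|_M+\|y\|_N$ for $x\in M$, $y\in N$ defines a norm on $A$ with the required property. The main obstacle is the polarization step in part (b): it is the hypothesis that $M$ is a real subspace lying inside $Q_A$ (not merely a union of the complex slices $\C_J$) that forces $J+K$ back into $Q_A$ and thereby makes the anticommutator $JK+KJ$ real.
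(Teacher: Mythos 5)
Your proof is correct. The paper does not prove this lemma itself---it imports it from \cite[Lemma~1.4]{VolumeCauchy}---and your argument is essentially the standard one: the hypothesis that $M$ is a real vector subspace contained in $Q_A$ forces $n|_M$ to be a positive-definite real quadratic form, and any extension of the resulting Euclidean norm from $M$ to $A$ finishes the job. (Your polarization step can even be shortened: $2b(x,y)=n(x+y)-n(x)-n(y)$, and all three terms are real precisely because $x$, $y$ and $x+y$ all lie in $M\subset Q_A$, which is the same subspace-forces-reality mechanism you isolate for $J+K$.) One small remark: the paper immediately afterwards takes a basis of all of $A$ orthonormal with respect to the scalar product associated to $\|\ \|$, so the extension to $A$ should itself be Euclidean; your $\ell^1$-type combination $\|x\|_M+\|y\|_N$ is not, but replacing it by $\left(\|x\|_M^2+\|y\|_N^2\right)^{1/2}$ with $\|\cdot\|_N$ Euclidean costs nothing and gives exactly what is used later.
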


Let $\B=(v_0,v_1,\ldots,v_{m})$ be a real vector basis of $M$ with $v_0=1$, orthonormal w.r.t.\ the scalar product $\langle\ ,\,\rangle$ on $A$ associated to the norm $\|\ \|$. Complete $\B$ to a real vector basis $\B_A=(v_0,v_1,\ldots,v_{d-1})$ of $A$, orthonormal w.r.t.\ the scalar product $\langle\ ,\,\rangle$ on $A$.  
For every $x,y\in M$, it holds $2\langle x,y\rangle=\|x+y\|^2-\|x\|^2-\|y\|^2=n(x+y)-n(x)-n(y)= t(xy^c)$. Therefore $\langle x,y\rangle=\frac12 t(xy^c)$ on $M$. 
In particular, 
\begin{align}
&t(v_i)=2\langle v_i,1\rangle=0\quad\text{for every $i=1,\ldots,m$,} \label{eq:t1}\\
&t(v_iv_j)=-t(v_iv_j^c)=-2\langle v_i,v_j\rangle=0\quad\text{for every $i\ne j$ in the set $\{1,\ldots,m\}$.}\label{eq:t2}
\end{align}
Property \eqref{eq:t1} implies that $v_i^c=-v_i$ for every $i=1,\ldots,m$ and property \eqref{eq:t2} shows that for every $i\ne j$ in the set $\{1,\ldots,m\}$, the elements $v_i$ and $v_j$ anticommute, since $v_iv_j+v_jv_i=t(v_iv_j)=0$. Moreover, $v_i^2=-v_iv_i^c=-n(v_i)=-\|v_i\|^2=-1$ for every $i=1,\ldots,m$. 
By Proposition 1 of \cite{AIM2011}, we know that $\s_A=\{J \in A \, | \, t(J)=0, \ n(J)=1\}$. Therefore $\{v_1,\ldots,v_m\}\subseteq S$ and $M$ is the orthogonal direct sum of $\R=\Span(v_0)$ and $M \cap \ker(t)=\Span(v_1,\ldots,v_m)$.
Moreover, every triple $\{v_i,v_j,v_iv_j\}$, with $i\ne j$, $1\le i,j\le m$, is an \emph{Hamiltonian triple} in $A$, such that $\Span(1,v_i,v_j,v_iv_j)\simeq\hh$.

The Cauchy-Schwarz inequality for the scalar product $\langle\ ,\ \rangle$ implies that $xy^c$ belongs to the quadratic cone $\Q_A$ for every $x,y\in M$, since $t(xy^c)\in\R$, $n(xy^c)=(xy^c)(yx^c)=n(x)n(y)\in\R$ (see e.g.\ \cite[Thm.\ 1.7]{AlgebraSliceFunctions}), and it holds
\[
(t(xy^c))^2=(2\langle x,y\rangle)^2\le4\| x\|^2\| y\|^2=4n(x)n(y)=4n(xy^c),
\] 
with equality if and only if $xy^c$ is real.

Let $L:\R^d \to A$ be the real vector isomorphism sending $x=(x_0,x_1,\ldots,x_{d-1})$ into $L(x)=\sum_{\ell=0}^{d-1}x_{\ell}v_{\ell}$. Identify $\R^d$ with $A$ via~$L$ and $M$ with $\R^{m+1}=\R^{m+1} \times \{0\} \subset \R^d$. The product of $A$ becomes a product on $\R^d$. Given $x,y \in \R^d$, $xy$ is defined as $L^{-1}(L(x)L(y))$. Since $\B_A$ is orthonormal, $\|x\|$ coincides with the Euclidean norm $(\sum_{\ell=0}^{d-1}x_{\ell}^2)^{1/2}$ of $x$ in $\R^d$. Moreover,
\[
S=\{L(x)\in M\,|\,x\in\R^{m+1},\,x_0=0,\, \textstyle\sum_{i=1}^mx_i^2=1\}.
\]

\begin{definition}\label{def:db}
Let $\OO$ be an open subset of the vector subspace $M$ inducing $S$. The \emph{Cauchy-Riemann operator induced by 
$\B$
} is the partial differential operator with constant coefficients $\dB:C^1(\OO,A)\to C^0(\OO,A)$ defined by
\[
\dB:=\tfrac12\left(\partial_0+v_1\partial_1+\cdots +v_m\partial_m\right),
\]
where the operators $\partial_i:C^1(\OO,A)\to C^0(\OO,A)$ are defined by 
$\partial_i f=L\circ\dd {(L^{-1}\circ f\circ L)}{x_i}\circ L^{-1}$ for $i=0,1,\ldots,m$.
\end{definition}

\begin{remark}
Note that sometimes in the literature the factor $1/2$ is absent.
\end{remark}

\begin{remark}
From the definition, it is immediate to check that $\partial_i\partial_jf=\partial_j\partial_if$ for every $i,j$ and every $f$ of class $C^2$.
Moreover, the operators $\partial_i$ satisfy a Leibniz rule with respect to the pointwise product of $A$-valued functions. This can be easily seen introducing the structure constants of $A$ w.r.t.\ $\B$, i.e., the real numbers $\{c_{\alpha,\beta}^\gamma\}_{\alpha,\beta,\gamma}$ such that
\[v_\alpha v_\beta=\sum_{\gamma=0}^{d-1}c_{\alpha,\beta}^\gamma v_\gamma\text{\quad for every $\alpha,\beta\in\{0,\ldots,d\}$}.
\]
Let $\OO=\OO'\cap M$, with $\OO'$ open subset of $A$. Let $f(x)=\sum_{\alpha=0}^{d-1}f_\alpha(x)v_\alpha$ and $g(x)=\sum_{\beta=0}^{d-1}g_\beta(x)v_\beta$ be $A$-valued functions of class $C^1$ on $\OO'$, with $f_\alpha, g_\beta\in C^1(\OO',\R)$ for every $\alpha,\beta$. Then
$f(x)g(x)=\sum_{\gamma=0}^{d-1}\sum_{\alpha,\beta=0}^{d-1}f_\alpha(x)g_\beta(x)c^\gamma_{\alpha,\beta}v_\gamma$.
Therefore 
\begin{align*}
\partial_i(fg)&=\sum_{\gamma=0}^{d-1}\sum_{\alpha,\beta=0}^{d-1}\dd{((f_\alpha\circ L)(g_\beta\circ L))}{x_i} c^\gamma_{\alpha,\beta}v_\gamma\\
&=\sum_{\gamma=0}^{d-1}\sum_{\alpha,\beta=0}^{d-1}\left(\dd{(f_\alpha\circ L)}{x_i}(g_\beta\circ L)+(f_\alpha\circ L)\dd{(g_\beta\circ L)}{x_i}\right)c^\gamma_{\alpha,\beta}v_\gamma\\
&=(\partial_if)g+f(\partial_ig).
\end{align*}
In particular, it holds $\partial_i(fa)=(\partial_if)a$ and $\partial_i(af)=a(\partial_if)$ for every $a\in A$.
In general, when $A$ is non-commutative or non-associative, $\dB$ does not  satisfy a Leibniz rule. However, if $A$ is associative, then $\dB(fa)=(\dB f)a$ for every function $f$ and $a\in A$.
\end{remark}

Let $\cS:=\dB(L(x_0,\ldots,x_{d-1}))=\dB(x_0+\sum_{i=1}^{d-1}x_iv_i)$. 
Since $v_i^2=-1$ for every $i=1,\ldots,m$, it holds $\cS=(1-m)/2$. Let
\[
\partial_\B:=\tfrac12\left(\partial_0-v_1\partial_1-\cdots -v_m\partial_m\right)
\]
be the \emph{conjugated Cauchy-Riemann operator induced by $\B$} and let $\Delta_\B$ the \emph{Laplacian operator on $M$ induced by $\B$}, acting on functions $f$ of class $C^2(\OO,A)$ as
\[\Delta_\B f:=\sum_{i=0}^m\partial_i(\partial_if).\]

\begin{proposition}\label{pro:powers}
Let $\OO$ be an open subset of the vector subspace $M$ inducing $S$.
\begin{itemize}\setlength\itemsep{0.5em}
	\item[(a)]
For every $n\in\nn$ and every $x\in M$, it holds:
\[
\dB(x^n)=\cS\sd{(x^n)}=\tfrac{1-m}2 \sd{(x^n)}=\tfrac{1-m}2\sum_{k=0}^{n-1}x^{n-k-1}(x^c)^k.
\]
	\item[(b)]
	For every function $f$ of class $C^2(\OO,A)$, it holds:
\[
\partial_\B(\dB f)=\dB(\partial_\B f)=\tfrac14\Delta_\B f.
\]
\end{itemize}	
\end{proposition}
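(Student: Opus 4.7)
For part (a), my plan is to compute $\dB(x^n)$ directly in the Cartesian coordinates of $M$ and identify the result with $c_m\,\sd{(x^n)}$. Setting $\tilde x := \sum_{i=1}^m x_i v_i$ and $\rho := \sum_{i=1}^m x_i^2$, so that $x = x_0 + \tilde x$, the identities $\tilde x^c = -\tilde x$ and $\tilde x\tilde x^c = n(\tilde x)$ yield $\tilde x^2 = -\rho$. The stem function $F(z) = z^n$ splits as $F_1 + iF_2$ with $F_1 = \RE(z^n)$ even in $\beta$ and $F_2 = \IM(z^n)$ odd in $\beta$, so $x^n$ admits the pointwise representation $x^n = F_1(x_0,\rho) + G(x_0,\rho)\,\tilde x$, where $G := F_2/\beta$ is a real polynomial in $(x_0, \rho)$. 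The real Leibniz rule gives
\[
\partial_0(x^n) = (F_1)_{x_0} + G_{x_0}\,\tilde x, \qquad
\partial_i(x^n) = 2x_i(F_1)_\rho + 2x_i G_\rho\,\tilde x + G v_i \quad (i \ge 1),
\]
and using $\sum_i x_iv_i = \tilde x$, $\sum_i v_i^2 = -m$, and $\tilde x^2 = -\rho$, I obtain
\[
2\dB(x^n) = \bigl[(F_1)_{x_0} - 2\rho G_\rho - mG\bigr] + \bigl[G_{x_0} + 2(F_1)_\rho\bigr]\tilde x.
\]
The Cauchy-Riemann equations for $z^n$, rewritten through $F_2 = \beta G$ and $\rho = \beta^2$, become $(F_1)_{x_0} = G + 2\rho G_\rho$ and $2(F_1)_\rho = -G_{x_0}$; substitution collapses the first bracket to $(1-m)G$ and the second to zero. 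Since $G = \sd{(x^n)}$, this proves $\dB(x^n) = \tfrac{1-m}{2}\,\sd{(x^n)} = c_m\,\sd{(x^n)}$.

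For the explicit formula, I invoke Artin's theorem: the subalgebra generated by $x$ is associative, and because $x^c = t(x) - x$ commutes with $x$, the classical identity $x^n - (x^c)^n = (x - x^c)\sum_{k=0}^{n-1} x^{n-1-k}(x^c)^k$ holds in this commutative subring. Writing $x - x^c = 2\IM(x)$ and dividing by $2\IM(x)$ off the real axis gives $\sd{(x^n)} = \sum_{k=0}^{n-1} x^{n-1-k}(x^c)^k$, and the identity extends continuously to $\R$.

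For part (b), I expand the composition directly. Because $\partial_i\partial_j = \partial_j\partial_i$ on $C^2$ functions and $\partial_i(v_j g) = v_j(\partial_i g)$ by the preceding Remark, the two $\partial_0$-cross terms cancel and
\[
4\,\partial_\B\dB f = \partial_0^2 f - \sum_{i,j=1}^m v_i\bigl(v_j\,\partial_i\partial_j f\bigr).
\]
On the diagonal $i = j$, alternativity gives $v_i(v_i g) = v_i^2 g = -g$, contributing $-\sum_i \partial_i^2 f$. For $i \ne j$, pairing $(i,j)$ with $(j,i)$ I write
\[
v_i(v_j g) + v_j(v_i g) = (v_iv_j + v_jv_i)g - \bigl[(v_i, v_j, g) + (v_j, v_i, g)\bigr] = 0,
\]
since distinct $v_i, v_j$ anticommute and the associator is alternating. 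Hence $4\,\partial_\B\dB f = \partial_0^2 f + \sum_{i=1}^m \partial_i^2 f = \Delta_\B f$, and the mirror-image computation yields $4\,\dB\partial_\B f = \Delta_\B f$.

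The main obstacle is the bookkeeping in (a): the Cauchy-Riemann equations live in the $(\alpha, \beta)$-variables and must be transported to the $(x_0, \rho)$-variables, keeping careful track of the $\beta$-factors produced by the chain rule, and one must ensure the slice decomposition extends smoothly across the real axis. In (b), the subtlety is that alternativity alone (not associativity) has to be exploited through the alternating associator identity in order to cancel the off-diagonal contributions.
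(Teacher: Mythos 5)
Your proof is correct. Part (b) follows the paper's argument essentially verbatim: cancellation of the $\partial_0$-cross terms, $v_i(v_ig)=v_i^2g=-g$ on the diagonal, and the anticommutation-plus-alternating-associator identity for the off-diagonal pairs. Part (a), however, takes a genuinely different route. The paper argues by induction on $n$, using the Leibniz rule for $\partial_i$, Artin's theorem, the identity $\sum_{i=1}^m v_i\,\IM(x)\,v_i=(m-2)\IM(x)$, and the product formula $\sd{(x^n)}=\sd{(x^{n-1})}\vs{x}+\vs{(x^{n-1})}\sd{x}$. You instead compute $\dB(x^n)$ in one shot from the zonal representation $x^n=F_1(x_0,\rho)+G(x_0,\rho)\tilde x$ with $\rho=\|\IM(x)\|^2$ and $G=F_2/\beta$, transporting the Cauchy--Riemann equations for $z^n$ to the $(x_0,\rho)$-variables; this is valid (the polynomial parity of $F_1,F_2$ in $\beta$ makes the representation smooth across $\R$, and no triple products occur, so alternativity suffices), and it has the merit of anticipating the general mechanism of Proposition \ref{pro:gamma_slice}(b) and of the $G_1,G_2$-computations in Section \ref{sec:Laplacian} --- indeed your calculation proves $\dB f=c_m\sd f$ for any slice-regular slice-preserving $f$, not just powers, whereas the paper's induction is more elementary and purely algebraic. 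Your derivation of the explicit sum $\sum_{k=0}^{n-1}x^{n-k-1}(x^c)^k$ via Artin's theorem and the commutativity of $x$ with $x^c=t(x)-x$ matches the paper's. The only cosmetic quibble is the phrase ``contributing $-\sum_i\partial_i^2f$'' in (b), which is correct only as a statement about the double sum before the overall minus sign is applied; your final identity $4\,\partial_\B\dB f=\partial_0^2f+\sum_i\partial_i^2f$ is right.
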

We postpone the proof of the proposition to the Appendix to improve the reading of the paper.

Let $\OO$ be an open subset of the vector subspace $M$ inducing $S$. 
We recall from \cite{Gh_Pe_GlobDiff} the definition of the global differential operators $\dif,\difbar: C^1(\OO\setminus \R,A) \to C^0(\OO\setminus \R, A)$ associated with the slice derivatives. Since we are considering only functions defined on $M$, in the formula defining $\dif$ and $\difbar$ we can take 
the derivatives w.r.t.\ the first $m+1$ coordinates: 
\[
\dif=\frac12\bigg(\partial_0-\frac{\IM(x)}{n(\IM(x))} \sum_{i=1}^m x_i\partial_i\bigg),\quad
\difbar=\frac12\bigg(\partial_0+\frac{\IM(x)}{n(\IM(x))} \sum_{i=1}^m x_i\partial_i\bigg),
\]
where the operators $\partial_i:C^1(\OO,A)\to C^0(\OO,A)$ were given in Definition \ref{def:db}. For every slice function $f$ on $\OO$, it holds $\dif f=\dd{f}{x}$ and $\difbar f =\dd f{x^c}$  on $\OO\setminus\R$ (see \cite[Theorem 2.2]{Gh_Pe_GlobDiff}).

Observe that on a slice $\C_I\subseteq M$, the operator $\difbar$ coincides with the standard Cauchy-Riemann operator of $\C_I$.
In particular, when $\dim(M)=2$, then $M=\cc_I$ and the operators $\difbar$, $\dB$ are both equal to the Cauchy-Riemann operator of $M$.

For any $i,j$ with $1\le i,j\le m$, let $L_{ij}=x_i\dds j-x_j\dds i$ and let
\[\Gamma_\B=-\tfrac12\sum_{i,j=1}^m v_i(v_jL_{ij})
\] 
be the \emph{spherical Dirac operator} associated to the basis $\B$. 
It acts on $C^1$ functions $f$ as $\GB f=-\tfrac12\sum_{i,j=1}^m v_i(v_jL_{ij}f)$.
In the case of octonions, this operator has been introduced also in the recent paper \cite{SliceFueterRegular}. 
The operators $L_{ij}$ are tangential differential operators for the spheres $\s_x\cap M=\alpha+S\beta$, with $x=\alpha+I\beta\in\Q_A$. The next proposition extends Propositions 3.2 and 6.1 of \cite{Harmonicity} to the setting of $C^1$ (not necessarily slice) functions on open domains in $M$. To take care also of the non-associative case, we firstly need a  definition.

\begin{definition}
A function $f:\OO\to A$ of class $\mathcal{C}^1$ is said to be \emph{$M$-admissible} if $f(x)$ and $\dB f(x)$ belong to $M$ for all $x\in\OO$. 
\end{definition}

Observe that when $A=\hh$ or $A=\oo$, we can take $M=\Q_A=A$ (see \cite[Prop.1]{AIM2011}). In this case every $A$-valued function is $M$-admissible.

\begin{proposition}\label{pro:gamma}
Let $\OO$ be an open subset of the vector 
subspace $M$ inducing $S$. Let $\dim(M)=m+1$. Then it holds:
\begin{itemize}\setlength\itemsep{0.5em}
\item[(a)]
$\Gamma_\B (x)=(m-1)\IM(x)$.
\item[(b)]
Let $f:\OO\to A$ be a $\mathcal{C}^1$ function. If $A$ is not associative, assume that $f$ is $M$-admissible. Then $\Gamma_B$ and multiplication by $x$ satisfy the following commutation relation
\[\Gamma_\B (xf(x))=(m-1)\IM(x)f(x)+ x^c\,\Gamma_\B f(x)\]
for all $x\in\OO$.
\end{itemize}
\end{proposition}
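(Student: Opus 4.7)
The plan is to reduce both parts to direct computation using only orthonormality of the basis $\B$, the alternative identity $v_j(v_jf)=-f$, the Moufang identity $(xax)y=x(a(xy))$, and Artin's theorem on the (associative) subalgebras generated by pairs $\{v_i,v_j\}$ or $\{y,v_j\}$, where $y=\IM(x)$.

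For part (a), since $L_{ij}$ involves only $\partial_1,\dots,\partial_m$ and annihilates $x_0$, one has $L_{ij}(x)=x_iv_j-x_jv_i$, and then $v_i(v_j(x_iv_j-x_jv_i))=-x_iv_i-x_j(v_iv_jv_i)$ by alternativity and Moufang. Inside the associative subalgebra generated by $\{v_i,v_j\}$, anticommutativity yields $v_iv_jv_i=v_j$ for $i\ne j$ and $v_iv_iv_i=-v_i$, so $\sum_{i=1}^m v_iv_jv_i=(m-2)v_j$. Summing over $i,j$ gives $-my-(m-2)y=-2(m-1)\IM(x)$, and multiplication by $-\tfrac12$ produces the claim.

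For part (b), write $x=x_0+y$. Because $x_0$ is real-valued and $L_{ij}$ does not touch $\partial_0$, one has $\Gamma_\B(x_0f)=x_0\Gamma_\B f$; combined with $x^c=x_0-y$ this reduces the claim to $\Gamma_\B(yf)=(m-1)yf-y\Gamma_\B f$. Using $\partial_jy=v_j$ and the Leibniz rule for $\partial_j$ (from the remark after Definition \ref{def:db}), one computes $L_{ij}(yf)=(x_iv_j-x_jv_i)f+yL_{ij}f$, splitting $\Gamma_\B(yf)=A+B$ with $A=-\tfrac12\sum_{i,j}v_i(v_j((x_iv_j-x_jv_i)f))$ and $B=-\tfrac12\sum_{i,j}v_i(v_j(yL_{ij}f))$. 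The term $A$ is handled by the same computation as in part (a), carrying $f$ through via $v_j(v_jf)=-f$ and the Moufang identity $v_i(v_j(v_if))=(v_iv_jv_i)f$, yielding $A=(m-1)yf$.

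The main obstacle is to show $B=-y\Gamma_\B f$, i.e.\ that $\sum_{i,j}v_i(v_j(yL_{ij}f))=-y\sum_{i,j}v_i(v_jL_{ij}f)$. My plan is to apply $\partial_j(yf)=v_jf+y\partial_jf$ iteratively and invoke Moufang in the form $(yv_jy)h=y(v_j(yh))$, where pointwise $yv_jy=r^2v_j-2x_jy$ inside the associative subalgebra generated by $y$ and $v_j$ (with $r^2=n(y)$), together with $y(yh)=y^2h=-r^2h$. Careful bookkeeping then reduces both sides to the same linear combination of $yf$, $\sum_jv_j\partial_jf$, and $y\sum_jx_j\partial_jf$ with coefficients involving $r^2$. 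In the non-associative case, the $M$-admissibility hypothesis is what permits moving $y$ and the $v_k$'s past $f$ without introducing obstructive associators; this matching is the most delicate step of the proof.
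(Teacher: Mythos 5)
Your part (a) and the reduction of (b) to $\Gamma_\B(yf)=(m-1)yf-y\,\Gamma_\B f$ with $y=\IM(x)$, together with the evaluation $A=(m-1)yf$ via Moufang and Artin, coincide in substance with the paper's proof. Where you genuinely diverge is the remaining identity $B=-y\,\Gamma_\B f$, and your route is both correct and cleaner than the paper's: the paper expands $y=\sum_kx_kv_k$ and runs a case analysis on triple products $v_i(v_j(v_k\,\cdot))$ according to whether $k\in\{i,j\}$, which is exactly where it needs $v_i(v_kw)=-v_k(v_iw)$ for $w\in M$ and hence the $M$-admissibility of $f$. Your plan avoids all of that: resumming over $i$ (resp.\ $j$) first reassembles the inner factors into the single element $y$, after which Moufang 1 with $x=y$, the relation $yv_jy=r^2v_j-2x_jy$ and left alternativity $y(yh)=y^2h=-r^2h$ give, with $w:=\sum_jv_j\partial_jf$,
\[
\sum_{i,j}v_i\bigl(v_j(y\,L_{ij}f)\bigr)=\sum_j(yv_jy)\partial_jf+r^2w=2r^2w-2y\sum_jx_j\partial_jf,
\]
and on the other side
\[
-y\sum_{i,j}v_i\bigl(v_jL_{ij}f\bigr)=-y(yw)+\sum_i(yv_iy)\partial_if=2r^2w-2y\sum_ix_i\partial_if,
\]
so the two sides agree and the delicate step does close. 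Note that this uses only bilinearity of the product, left alternativity and Moufang 1, applied to arbitrary elements of $A$; consequently, on your route the $M$-admissibility hypothesis is not needed for part (b) at all, and your closing sentence about it "permitting $y$ and the $v_k$'s to be moved past $f$" misdescribes the argument — nothing is ever moved past $f$. Two further small corrections: the remark about applying $\partial_j(yf)=v_jf+y\partial_jf$ "iteratively" is out of place, since after the single Leibniz split defining $A$ and $B$ no further derivatives of products occur; and the common value of the two sides is a combination of $w$ and $y\sum_jx_j\partial_jf$ only — the $yf$ term you list belongs to $A$, not $B$.
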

We postpone also the proof of Proposition \ref{pro:gamma} to the Appendix.

We now establish the main relation linking the three operators $\dB$, $\difbar$ and $\Gamma_\B$ on an hypercomplex subspace $M$ of $A$. Also the proof of Theorem \ref{teo:difference} will be given in the Appendix.

\begin{theorem}\label{teo:difference}
Let $\OO$ be an open subset of the vector subspace $M$ inducing $S$. Let $f:\OO\to A$ be a $\mathcal{C}^1$ function. If $A$ is not associative, assume that $f$ is $M$-admissible. Then the following formula holds on $\OO\setminus\R$:
\[
\dB f-\difbar f=-(2\IM(x))^{-1} \Gamma_\B f.
\]
\end{theorem}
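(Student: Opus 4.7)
The plan is to clear the inverse on the right-hand side by multiplying through by $-2\IM(x)$ and then verify the resulting equality by a direct computation. Since $\IM(x)^c=-\IM(x)$ and $\IM(x)\IM(x)^c=n(\IM(x))$, we have $\IM(x)^{-1}=-\IM(x)/n(\IM(x))$, and the left-alternative identity yields $\IM(x)\bigl(\IM(x)\,y\bigr)=\IM(x)^2\,y=-n(\IM(x))\,y$ for every $y\in A$. Consequently, the theorem is equivalent to
\[
-2\IM(x)\bigl(\dB f-\difbar f\bigr)=\Gamma_\B f\quad\text{on }\OO\setminus\R.
\]

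Next I would exploit the defining formulas of $\dB$ and $\difbar$: the $\partial_0$ terms cancel, leaving
\[
\dB f-\difbar f=\tfrac12\sum_{i=1}^m v_i\partial_i f-\tfrac{1}{2n(\IM(x))}\IM(x)\sum_{i=1}^m x_i\partial_i f.
\]
Multiplying on the left by $-2\IM(x)$ and applying the alternative identity above to the second summand yields
\[
-2\IM(x)\bigl(\dB f-\difbar f\bigr)=-\IM(x)\sum_{i=1}^m v_i\partial_i f-\sum_{i=1}^m x_i\partial_i f,
\]
so it remains to identify this expression with $\Gamma_\B f$.

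Expanding $\Gamma_\B f=-\tfrac12\sum_{i,j=1}^m v_i\bigl(v_j(x_i\partial_j f-x_j\partial_i f)\bigr)$, I would split the double sum into its diagonal and off-diagonal parts, exploit the antisymmetry of $L_{ij}$ in $(i,j)$ via the relabeling $i\leftrightarrow j$ in one half, and apply $v_iv_j=-v_jv_i$ for $i\ne j$ together with $v_i^2=-1$. In the associative case this reduces the double sum to $-\sum_{i,j}x_i v_iv_j\partial_j f-\sum_i x_i\partial_i f$, and recognizing $\sum_i x_iv_i=\IM(x)$ closes the argument.

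The main obstacle is the non-associative case, where the triple products $v_i\bigl(v_j\partial_k f\bigr)$ differ from $(v_iv_j)\partial_k f$ by a (possibly nonzero) associator, and where the factorization $\sum_{i,j}x_iv_iv_j\partial_j f=\IM(x)\sum_j v_j\partial_j f$ is no longer free. Here the $M$-admissibility hypothesis is decisive: it forces $\partial_i f\in M\subset Q_A$ for every $i$, so all triples of factors lie in the quadratic cone. This, combined with Artin's theorem, the flexibility law, and the Moufang identities \eqref{moufang1}--\eqref{moufang3} applied to pairs drawn from $\{v_i,v_j,\IM(x),\partial_k f\}$, lets one rewrite the offending associators and show that they cancel in the (antisymmetric) double sum. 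This bookkeeping is the technical heart of the argument and is presumably what motivates postponing the proof to the Appendix.
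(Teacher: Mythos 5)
Your route is genuinely different from the paper's, and the computations you actually carry out are correct. The paper instead expands $\IM(x)\left(\sum_i x_i\partial_i f+\Gamma_\B f\right)$ and shows it equals $n(\IM(x))\sum_j v_j\partial_j f$ up to a leftover sum of nested products $x_kx_j\,v_k(v_j(v_i\partial_i f))$ over three \emph{distinct} indices $i,j,k$; it is exactly the vanishing of that triple sum which forces the paper to invoke $M$-admissibility in the non-associative case (via $\dB f\in M$ and the identity $v_i(v_k y)=-v_k(v_i y)$ for $y\in M$). By left-multiplying $\dB f-\difbar f$ by $-2\IM(x)$ and comparing directly with $\Gamma_\B f$, you never produce products involving three distinct imaginary units acting on $f$ --- only terms of the form $v_j(v_i\partial_i f)$ --- and that is a real simplification. (The passage back from the multiplied identity to the stated one uses $a^{-1}(ab)=b$ for invertible $a$ in an alternative algebra, which is fine.)

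The one place your write-up is off target is the final paragraph: the ``bookkeeping'' you defer is not where any difficulty lies, and the mechanism you propose ($M$-admissibility forcing $\partial_k f$ into $Q_A$, plus Moufang identities) is not what closes the argument. After the relabelling you describe, the identity $-2\IM(x)(\dB f-\difbar f)=\Gamma_\B f$ reduces to
\[
\sum_{i\ne j}x_j\bigl(v_j(v_i\,\partial_i f)+v_i(v_j\,\partial_i f)\bigr)=0,
\]
and each summand vanishes termwise for an \emph{arbitrary} $y=\partial_i f(x)\in A$: writing $v_i(v_jy)=(v_iv_j)y-(v_i,v_j,y)$ and $v_j(v_iy)=(v_jv_i)y-(v_j,v_i,y)$, the first parts cancel because $v_iv_j+v_jv_i=0$, and the associators cancel because the associator of an alternative algebra is alternating. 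No Moufang identity and no $M$-admissibility is needed on your route (whether $\partial_k f$ lies in $Q_A$ is irrelevant to this cancellation; note also that $\partial_k f\in M$ already follows from $f$ being $M$-valued, whereas the substantive part of admissibility is $\dB f\in M$). Once this one line is inserted your proof is complete, and it in fact establishes the formula for every $\mathcal C^1$ function $f:\OO\to A$, with the admissibility hypothesis playing no role --- that hypothesis is genuinely used only along the paper's own computation and in Proposition \ref{pro:gamma}(b), where triple products with three distinct units really do occur.
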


The vector subspace $M$ inducing the gis $S$ is contained in the quadratic cone $\Q_{A}$. Therefore we can consider the restriction of a slice function on an axially symmetric domain $\OO_D$ to the subset $\OO=\OO_D\cap M$ of $M$. We call such a set $\OO$ an \emph{axially symmetric} open subset of $M$. If $\OO$ is also connected, we call it an \emph{axially symmetric domain} of $M$. 

Thanks to the representation formula (see e.g.~\cite[Prop.6]{AIM2011}), the restriction of a slice function to $\OO_D$ uniquely determines the function. We will therefore use the same symbol to denote the restriction. 

Proposition \ref{pro:powers} shows that the spherical derivative of a slice-regular polynomial $\sum_{n=0}^dx^na_n$ with coefficients in $A$, at least in the associative case, can be obtained by application of a differential operator, namely,  the Cauchy-Riemann operator. We now extend this result to all slice-regular functions.

\begin{proposition}\label{pro:gamma_slice}
Let $\OO_D$ be an axially symmetric open subset of $\Q_A$ and let $\OO:=\OO_D\cap M$, an axially symmetric open subset of $M$. Assume that $m=\dim(M)-1$ is greater than 1. Let $f:\OO\to A$ be a slice function of class $\mathcal{C}^1(\OO)$. If $A$ is not associative, assume that $f$ is $M$-admissible. Let $\cS=(1-m)/2$. Then the following properties hold on $\OO\setminus\R$:
\begin{itemize}\setlength\itemsep{0.5em}
	\item[(a)]
$\Gamma_\B f=(m-1)\IM(x)\sd f$.
\item[(b)]
$\dB f=\dd f{x^c}+\cS f'_s$,
\item[(c)] 
$\partial_\B f=\dd fx-\cS f'_s$\quad and\quad $\dd {(f'_s)}x=\partial_\B f'_s$.
\item[(d)]
$f$ is slice-regular if and only if $\dB f=\cS f'_s$.   
\item[(e)]
$f\in\ker(\dB)$ if and only if $\dd f{x^c}=-\cS\sd f$.
\item[(f)]
If $f$ is slice-regular, then 
\begin{equation}\label{eq:Delta}
\Delta_\B f=4\cS\,\dd{(\sd f)}x\quad\text{and}\quad \IM(x)\Delta_\B f=2\cS\left(\dd f x-\sd f\right).
\end{equation}
\item[(g)]
If $f\in\ker(\dB)$ is slice-regular, then $f$ is locally constant. The same holds if $f\in\ker(\partial_\B)$ and it is anti-slice-regular, namely, it satisfies $\dd {f}x=0$.
\item[(h)]
If $f\in\sr(\OO)$ and $\Delta_\B f=0$, then $f$ is an affine function of the form $xa+b$, with $a,b\in A$.
\end{itemize}
\end{proposition}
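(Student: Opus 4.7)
The plan is to derive (a)--(h) in order, so that each item can feed into the next, with (a) and (b) supplying the key new input.

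First I would prove (a). The decomposition $f=\vs f+\IM(x)\sd f$ reduces the task to computing $\Gamma_\B$ on each summand. Since $\vs f$ and $\sd f$ are constant on every sphere $\s_x\cap M$, they are annihilated by each tangential operator $L_{ij}$ and hence by $\Gamma_\B$. Writing $\IM(x)\sd f=x\,\sd f-\RE(x)\sd f$, the second term disappears because $\RE(x)$ is a real scalar and $\Gamma_\B\sd f=0$; the first is handled by the commutation relation of Proposition~\ref{pro:gamma}(b), whose $M$-admissibility hypothesis (when $A$ is non-associative) is inherited from $f$. This yields $\Gamma_\B f=(m-1)\IM(x)\sd f$.

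Item (b) is then a direct substitution: Theorem~\ref{teo:difference} gives $\dB f-\difbar f=-(2\IM(x))^{-1}\Gamma_\B f$, while for slice functions $\difbar f=\ddd{f}{x^c}$ by \cite[Theorem 2.2]{Gh_Pe_GlobDiff}; using (a) and the alternative-algebra identity $(2\IM(x))^{-1}\IM(x)=1/2$, the correction collapses to $\cS\sd f$. Item (c) follows from $\dif+\difbar=\partial_0=\dB+\partial_\B$ combined with (b); its second identity is (c) itself applied to $g=\sd f$, whose spherical derivative vanishes because it is constant on spheres. Items (d) and (e) are then immediate rewordings of (b) via the definition of (anti-)slice-regularity.

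For (f), I would apply Proposition~\ref{pro:powers}(b) in the form $\Delta_\B=4\partial_\B\dB$ to the identity $\dB f=\cS\sd f$ from (d), and then use the second formula of (c); this yields $\Delta_\B f=4\cS\,\ddd{(\sd f)}{x}$. The second equation of (f) requires the pointwise relation $2\IM(x)\,\ddd{(\sd f)}{x}=\ddd{f}{x}-\sd f$. I would verify this at the level of the stem function $F=F_1+\ui F_2$ inducing $f$: the Cauchy--Riemann equations for slice-regular $F$ let us express $\ddd{f}{x}$, $\sd f$ and $\ddd{(\sd f)}{x}$ in the $(1,J)$-basis of $\C_J$ in terms of the partial derivatives of $F_1,F_2$ and of $\beta^{-1}F_2$; a direct comparison, using alternativity to multiply by $\IM(x)=J\beta$ on the left, gives the identity. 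This short stem-function computation is the most technical point of the proof.

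Finally, (g) follows from (d): since $\cS\ne0$ for $m>1$, $\dB f=0$ forces $\sd f=0$, hence $F_2=0$, and holomorphy of the stem function $F=F_1$ then forces $F_1$ to be locally constant; the anti-slice-regular case uses (c) in place of (d). For (h), combining $\Delta_\B f=0$ with the two formulas in (f) gives simultaneously $\ddd{(\sd f)}{x}=0$ and $\ddd{f}{x}=\sd f$. Since $\ddd{f}{x}$ is slice-regular and equals the sphere-constant function $\sd f$, its inducing stem function has vanishing $\ui$-part, and holomorphy forces this stem function to be a constant $a\in A$. Integrating $\ddd{f}{x}=a$ at the stem level then yields $f(x)=xa+b$ for some $b\in A$, as required.
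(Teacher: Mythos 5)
Your argument is sound and, for items (b)--(e), (g) and (h), essentially reproduces the paper's proof. Two steps deserve comment. For (a), the paper does not pass through Proposition \ref{pro:gamma}(b): after computing $L_{ij}f=(x_iv_j-x_jv_i)\sd f$ by the Leibniz rule, it pulls $\sd f$ out of $\Gamma_\B$ directly, using the Moufang--Artin identities $v_i(v_j(v_ia))=(v_iv_jv_i)a$ and $v_i(v_j(v_ja))=-v_ia$, valid for every $a\in A$, so that $\Gamma_\B f=\Gamma_\B(x)\,\sd f=(m-1)\IM(x)\sd f$. Your route via $\Gamma_\B(x\,\sd f)$ reaches the same formula, but the assertion that the $M$-admissibility of $\sd f$ is ``inherited from $f$'' is not justified: $\sd f=\tfrac12\IM(x)^{-1}\left(f(x)-f(x^c)\right)$ involves a product of elements of $M$, which can leave $M$ when $M$ is a proper hypercomplex subspace of a non-associative algebra. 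The step is repairable --- in the proof of Proposition \ref{pro:gamma}(b) the admissibility hypothesis is used only to kill sums that already vanish when every $L_{ij}$ annihilates the function, as happens for the sphere-constant $\sd f$ --- but as written this is the one genuine hole in your justification, and the paper's direct computation avoids it entirely. For the second identity in (f), your stem-function verification of $2\IM(x)\,\ddd{(\sd f)}{x}=\ddd{f}{x}-\sd f$ is a correct alternative to the paper's argument, which instead observes that $\vs f-\IM(x)\sd f$ is anti-slice-regular and applies the Leibniz rule for the slice product to get $\ddd{f}{x}=\ddd{\left((x-x^c)\sd f\right)}{x}=\sd f+2\IM(x)\,\ddd{(\sd f)}{x}$; the paper's route is shorter, yours is self-contained, and the computation you outline does check out.
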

\begin{proof}
Since $f$ is a slice function, the functions $\vs f$ and $\sd f$ are constant on the spheres $\SS_x\cap M$.
Therefore every operator $L_{ij}$ vanishes on them. From the formula $f(x)=\vs f(x)+\IM(x)\sd f(x)$ and the Leibniz rule we get
\[
L_{ij}f=L_{ij}(\vs f(x))+L_{ij}(\IM(x)f'_s(x))=L_{ij}(\IM(x))\sd f(x)=(x_iv_j-x_jv_i)\sd f(x).
\]
Using again formulas \eqref{eq:iji},\eqref{eq:ijj} and point (a) of Proposition \ref{pro:gamma} we get 
\begin{align*}
\Gamma_\B f&=-\tfrac12\sum_{i,j}v_i(v_j((x_iv_j-x_jv_i)\sd f(x)))=-\tfrac12\sum_{i,j}v_i(v_j((x_iv_j-x_jv_i)))\sd f(x)\\
&=\Gamma_\B(x)\sd f(x)=(m-1)\IM(x)\sd f(x)
\end{align*}
and point (a) is proved. 
Point (b) follows immediately from (a) and Theorem \ref{teo:difference}, since a slice function $f$ is slice-regular if and only if $\difbar f=0$ on $\OO\setminus\rr$ and $\difbar f=\dd f{x^c}$ for every slice function (see \cite[Theorem 2.2]{Gh_Pe_GlobDiff}). 
To prove point (c), we observe that on $\OO\setminus\R$ it holds $\dif f=\dd fx$ and
\[
\partial_\B f-\dif f=(\partial_\B+\dB)f-\dB f-(\dif+\difbar)f+\difbar f=\partial_0f-\dB f-\partial_0f+\difbar f=
-\cS f'_s,
\]
using again Theorem \ref{teo:difference} and point (a) above. The last statement in (c) comes from the property  $(f'_s)'_s=0$, which holds for every slice function $f$. 
Points (d) and (e) are consequences of (b).

From (c) and (d) we get the first equality in \eqref{eq:Delta}:
\[\textstyle\dd{}x{(\sd f)}=\dif(\sd f)=\cS^{-1}\partial_\B(\dB f)=(4\cS)^{-1}\Delta_\B f\]
for every slice-regular $f$. Finally, since $f$ is slice-regular, the function $\tilde f=\vs f-\IM(x)\sd f$ is anti-slice-regular, namely, it satisfies $\dd {\tilde f}x=0$. Therefore 
\[
\dd f x=\dd{(\vs f+\IM(x)\sd f)}x=2\dd{(\IM(x)\sd f)}x=\dd{((x-x^c)\sd f)}x=\sd f+2\IM(x)\dd{(\sd f)}x,
\]
from which also the second equality in \eqref{eq:Delta} follows.

From (b) we get that if $\dB f=\dd f {x^c}=0$, then $\sd f\equiv0$. This means that the component $F_2$ of the inducing stem function $F$ vanishes identically. From the holomorphicity of $F$ it follows that $F_1$ is locally constant, and then also $f$ is locally constant. If instead $\partial_\B f=\dd f {x}=0$, then using (c) we conclude in the same way. Then (g) is proved.

Finally, to get (h) we follow the proof given in \cite[Lemma 23(a)]{EigenvaluePbms} in the quaternionic case.
From point (f), we get that $\dd{(\sd f)}{x}=0$, i.e., $\sd f=\I(\beta^{-1}F_2(\alpha+i\beta))$ is anti-slice-regular on $\OO$. Then the function $\beta^{-1}F_2(\alpha+i\beta)$ is constant, i.e., $F_2=\beta a$ with $a\in A$. Since $F$ is holomorphic, it follows that $F(z)=\alpha a+b+i\beta a=za+b$, and $f=\I(F)=xa+b$, with $b\in A$.
\end{proof}

\begin{remark}
The previous result shows that the operator $\Gamma_\B$, when acting on slice functions, depends only on $S$ (equivalently, on $M$) and not on the particular basis $\B$ chosen to represent it. 
\end{remark}

Let $\OO_D$ be an axially symmetric open subset of $\Q_A$ and let $\OO:=\OO_D\cap M$. 
Assume $m=\dim(M)-1>1$, i.e., that $M$ is an hypercomplex subspace of $A$.
Given a slice-regular function $f:\OO\to A$, also the pointwise product $xf$ is slice-regular on $\OO$. 
It follows that  $f=\vs f+\IM(x)\sd f=\sd{(xf)}-x^c\sd f(x)=h_1-x^ch_2$, with functions $h_1:=\sd{(xf)}$ and $h_2:=\sd f$ that are constant on every sphere $\s_x\cap M$ ($x\in\OO$). 
For this reason, we call the equality $f=h_1-x^ch_2$ the \emph{zonal decomposition} of $f$.
Since $f$ is real analytic \cite[Prop.\ 7]{AIM2011}, also $h_1$ and $h_2$ are real analytic. 

From Proposition \ref{pro:gamma_slice}(b) we get the following result.  

\begin{corollary}\label{cor:zonal_decomposition}
Let $f\in\sr(\OO)$, with $\OO$ an axially symmetric open subset of $M$.
If $A$ is not associative, assume that $f$ and $xf$ are $M$-admissible. Then the functions $h_1, h_2$ in the zonal decomposition $f=h_1-x^ch_2$ can be computed by means of the Cauchy-Riemann operator of $M$:
\begin{equation}\label{eq:h1h2}
h_1=\cS^{-1}\dB(xf),\quad h_2=\cS^{-1}\dB f,
\end{equation}
where $\cS=(1-m)/2$, $m=\dim(M)-1>1$. 
Moreover, $f$ is slice-preserving if and only if $h_1$ and $h_2$ are real-valued. \qed
\end{corollary}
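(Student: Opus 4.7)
The strategy is to apply Proposition~\ref{pro:gamma_slice}(d) twice: once to $f$ itself and once to the slice-regular function $xf$. Point (d) states that for a slice function $g$ (assumed $M$-admissible in the non-associative case), $g$ is slice-regular if and only if $\dB g = \cS \sd g$. Since the zonal decomposition introduced immediately before the corollary identifies $h_1 = \sd{(xf)}$ and $h_2 = \sd f$, both identities in \eqref{eq:h1h2} are direct specializations of this equivalence.

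More precisely, applying (d) to the slice-regular function $f$ gives $\dB f = \cS \sd f = \cS h_2$, so $h_2 = \cS^{-1}\dB f$. For the companion formula I first observe that $xf$ is slice-regular on $\OO$: the identity map is slice-preserving and slice-regular, so by the properties of the slice product recalled in Section~\ref{sec:pre} the slice product $x\cdot f$ coincides with the pointwise product $xf$ and is again slice-regular. Under the assumed $M$-admissibility of $xf$, a second application of (d) then yields $\dB(xf) = \cS \sd{(xf)} = \cS h_1$.

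For the slice-preserving characterization I would write $f = \I(F)$ with $F = F_1 + \ui F_2$, so that $h_2$ is the slice function induced by $\beta^{-1}F_2$, while the stem function of $xf$ is $zF$, whose imaginary component $\beta F_1 + \alpha F_2$ shows that $h_1$ is induced by $F_1 + (\alpha/\beta)F_2$. Then $h_2$ being real-valued is equivalent to $F_2$ being real-valued, and given this, $h_1$ being real-valued is equivalent to $F_1$ being real-valued; altogether $h_1$ and $h_2$ are both real-valued exactly when $F_1$ and $F_2$ are, which is the definition of $f$ being slice-preserving. The only point requiring any care is the legitimacy of applying (d) to $xf$ in the non-associative setting, which is precisely what the hypothesis that $xf$ is $M$-admissible secures; the remainder reduces to a one-line reading of Proposition~\ref{pro:gamma_slice}(d) together with a short stem-function computation.
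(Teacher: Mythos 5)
Your proposal is correct and follows essentially the same route as the paper, which derives the corollary immediately from Proposition \ref{pro:gamma_slice}(b); your use of point (d) is just the equivalent ``if and only if'' reformulation of (b) for slice-regular functions, and your stem-function computation for the slice-preserving claim matches the intended (omitted) argument.
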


\begin{remark}
When $A=M=\HH$, the functions $h_1,h_2$ in the zonal decomposition are harmonic in the four real variables. This is the content of the Almansi-type theorem proved in \cite{AlmansiH}. If $A$ is the real Clifford algebra $\R_n$, with $n\ge3$ odd, then the functions  $h_1,h_2$ are polyharmonic of degree $(n-1)/2$, i.e., in the kernel of the power $\Delta^{(n-1)/2}$ of the Laplacian of the paravector space $M=\R^{n+1}$ (see \cite{Almansi2019}). In Section \ref{sec:Laplacian} we will generalize these results to any hypercomplex subspace $M$.
\end{remark}

\section{Strongly slice-regular and locally slice-regular functions}\label{sec:strong_local}

Let $\OO$ be an open subset of an hypercomplex subspace $M$ of $A$, not necessarily axially symmetric. We recall that $\dim(M)>2$ by definition. Let $S=M\cap\s_A$ be the gis associated to $M$. 
We give a refinement of the definition of slice regularity suggested by Corollary \ref{cor:zonal_decomposition}.

\begin{definition}\label{def:ssr}
Let $\OO\subseteq M$ be any open set and let $f\in\mathcal C^1(\OO)$. The function $f$ is called \emph{strongly slice-regular} (and we write $f\in\sr^+(\OO)$) if the following properties hold:
\begin{enumerate}
\item[(i)] For every $I\in S$ such that $\OO_I:=\OO\cap \cc_I\not=\emptyset$, the restriction 
\[f_I:=f_{|\OO_I}:(\OO_I,L_I)\to(A,L_I)\]
 of $f$ is holomorphic w.r.t.\ the complex structure $L_I$ defined by left multiplication by $I$.
\item[(ii)] The functions $\dB f$ and $\dB(xf)$ are \emph{zonal}, i.e., they are constant on $\s_y\cap \OO$ for every $y\in \OO$.
\end{enumerate}
If property (i) holds and the functions $\dB f$ and $\dB(xf)$ in property (ii) are only locally constant on $\s_y\cap \OO$ for every $y\in \OO$, then $f$ is called \emph{locally slice-regular}, and we write $f\in\sr_{loc}(\OO)$.
\end{definition}

Thanks to the orthonormality of the bases, condition (ii) and then also the definitions of strongly or locally slice-regularity are independent from the choice of $\B$.

\begin{remark}
If a function $f\in\mathcal C^2(\OO)$ is locally slice-regular then it satisfies on $\OO$ the system of differential equations
\begin{equation}\label{eq:system}
\begin{cases}
\difbar f
=0,\\
L_{ij}\dB f=L_{ij}\dB (xf)=0\text{\quad for every $i,j$ with $1\le i,j\le m$}.\\
\end{cases}
\end{equation}
\end{remark}
In a subsequent work we will investigate on the sufficiency of equations \eqref{eq:system} for the local slice-regularity.

It always holds $\sr^+(\OO)\subseteq\sr_{loc}(\OO)$. Under axial symmetry conditions, the two function spaces can coincide.

\begin{proposition}\label{pro:axially}
Let $\OO$ be an axially symmetric domain of $M$ and let $f\in\sr(\OO)$. If $A$ is not associative, assume that $f$ and $xf$ are $M$-admissible. Then $f\in\sr^+(\Omega)$. In particular, if $A$ is associative or $M=\oo$, then 
$\sr^+(\Omega)=\sr_{loc}(\OO)=\sr(\Omega)$. 
\end{proposition}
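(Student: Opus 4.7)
The statement has two halves: the inclusion $\sr(\OO) \subseteq \sr^+(\OO)$ under the stated admissibility hypothesis, and the coincidence of all three spaces when $A$ is associative or $M = \oo$. For the first half, I would verify the two defining conditions of Definition \ref{def:ssr}. Condition (i) is immediate from the stem-function description of $f = \I(F) \in \sr(\OO)$: the restriction $f|_{\C_I \cap \OO}$ is the pullback of the holomorphic stem function $F$ by $\phi_I$, hence holomorphic with respect to $L_I$. Condition (ii) is precisely the content of Corollary \ref{cor:zonal_decomposition}: under the admissibility hypothesis on $f$ and $xf$, one has $\dB f = \cS \sd f$ and $\dB(xf) = \cS \sd{(xf)}$, and spherical derivatives of slice functions are constant on each sphere $\s_y \cap M$ by definition.

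For the equalities, the chain $\sr(\OO) \subseteq \sr^+(\OO) \subseteq \sr_{loc}(\OO)$ follows from the first half (the admissibility hypothesis is automatic when $M = \oo$ and unnecessary when $A$ is associative, since the cited results hold without it in that case) and from the definitions. The reverse $\sr_{loc}(\OO) \subseteq \sr^+(\OO)$ is a topological remark for axially symmetric $\OO$: every $\s_y \cap \OO$ equals the full sphere $\alpha + \beta S$, which has dimension $m - 1 \geq 1$ since $M$ is hypercomplex, and is therefore connected, so locally constant on $\s_y$ is the same as constant.

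The crux is the remaining inclusion $\sr^+(\OO) \subseteq \sr(\OO)$. Given $f \in \sr^+(\OO)$, fix $I_0 \in S$ and use the holomorphicity of $f|_{\C_{I_0} \cap \OO}$ to construct a holomorphic stem function $F_1 + \ui F_2$ on $D = \phi_{I_0}^{-1}(\C_{I_0} \cap \OO)$ by $F_1(\alpha+\ui\beta) := \tfrac{1}{2}(f(\alpha+I_0\beta)+f(\alpha-I_0\beta))$ and $F_2(\alpha+\ui\beta) := -\tfrac{I_0}{2}(f(\alpha+I_0\beta)-f(\alpha-I_0\beta))$. The induced slice-regular $\tilde f := \I(F_1+\ui F_2)$ agrees with $f$ on $\C_{I_0} \cap \OO$ and belongs to $\sr^+(\OO)$ by the first half, so $\psi := f - \tilde f \in \sr^+(\OO)$ vanishes on $\C_{I_0} \cap \OO$. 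The main obstacle is then the rigidity statement $\psi \equiv 0$. I would attack it as follows: condition (i) for $\psi$, together with the observation that on any slice $\difbar$ coincides with the standard Cauchy--Riemann operator, gives $\difbar \psi = 0$ on $\OO \setminus \R$; Theorem \ref{teo:difference} then rewrites this as $\dB \psi = -(2\IM(x))^{-1} \Gamma_\B \psi$, and the zonality of $\dB \psi$ provided by condition (ii) forces $\Gamma_\B \psi$ to take the form $(m-1)\IM(x) \cdot k$ for some zonal function $k$, mirroring Proposition \ref{pro:gamma_slice}(a) for a genuine slice function. Combined with the analogous condition on $\dB(x\psi)$, this should pin down the normal derivatives of $\psi$ along $\C_{I_0}$ and, via the slicewise analytic propagation afforded by condition (i), propagate the vanishing of $\psi$ to the remaining slices, closing the chain of equalities.
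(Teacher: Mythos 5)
Your treatment of the two inclusions that the paper's own proof actually establishes is correct and follows the same route: $\sr(\OO)\subseteq\srs(\OO)$ is read off from Corollary \ref{cor:zonal_decomposition} (condition (ii) holds because $\dB f=\cS\,\sd f$ and $\dB(xf)=\cS\,\sd{(xf)}$ are spherical derivatives of slice functions, hence constant on each sphere), and $\srl(\OO)=\srs(\OO)$ follows from the connectedness of the full spheres $\s_y\cap\OO=\alpha+\beta S$. The gap is in the remaining inclusion $\srs(\OO)\subseteq\sr(\OO)$, which you rightly identify as necessary for the stated equalities. In the paper this is not argued inside Proposition \ref{pro:axially} at all: it is exactly the forward implication of the Global Extendibility Theorem \ref{teo:extension} specialized to $\widetilde\OO=\OO$. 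Your direct attempt reduces it to the rigidity statement $\psi\equiv0$ for $\psi=f-\tilde f\in\srs(\OO)$ vanishing on one slice, but the mechanism you propose for that statement does not close: even if conditions (i)--(ii) did ``pin down the normal derivatives of $\psi$ along $\C_{I_0}$'', the function $\psi$ is only assumed of class $\mathcal C^1$, so vanishing of its $1$-jet along a single slice proves nothing, and slicewise holomorphy yields analyticity only in the directions tangent to each slice --- it provides no transversal propagation from $\C_{I_0}$ to the other slices. The phrases ``should pin down'' and ``propagate'' are standing in for the actual argument.

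The missing step is algebraic rather than analytic, and you already hold all the ingredients. From $\difbar\psi=\difbar(x\psi)=0$ (condition (i) applied to $\psi$ and $x\psi$), Theorem \ref{teo:difference} gives $\dB\psi=-(2\IM(x))^{-1}\Gamma_\B\psi$ and, via Proposition \ref{pro:gamma}(b), $\dB(x\psi)=\cS\psi+x^c\dB\psi$; condition (ii) then says that $h_1:=\cS^{-1}\dB(x\psi)$ and $h_2:=\cS^{-1}\dB\psi$ are zonal and that $\psi=h_1-x^ch_2$ on $\OO\setminus\R$. Now evaluate on the distinguished slice at the two points $\alpha+\beta I_0$ and $\alpha-\beta I_0$ of a given sphere, where $\psi$ vanishes: subtracting the two identities yields $2\beta I_0\,h_2=0$, hence $h_2=0$ and then $h_1=0$ off the reals, so $\psi\equiv0$ by continuity. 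This two-point evaluation is precisely the device of equations \eqref{eq:fI} in the paper's proof of Theorem \ref{teo:extension}; alternatively you can simply invoke that theorem, which is how the paper intends the equality $\srs(\OO)=\sr(\OO)$ to be justified.
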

\begin{proof}
The first statement follows from Corollary \ref{cor:zonal_decomposition}. If $A$ is associative, or if the functions $f$ and $xf$ are $M$-admissible for every $f\in\sr(\OO)$, then we get the inclusion $\sr(\OO)\subseteq\sr^+(\OO)$. Since $\OO$ is axially symmetric, the sets $\s_y\cap\OO$ are connected for every $y\in\OO$. Therefore it holds also $\sr_{loc}(\OO)=\sr^+(\OO)$.
\end{proof}

We now show that strong slice regularity is exactly the condition which assures that the function can be extended slice-regularly to an axially symmetric set. We recall the definition of the \emph{symmetric completion} of any subset $\OO$ of $A$:
\[\widetilde \OO=\bigcup_{x\in \OO}\s_x.\]
It is the smallest axially symmetric subset of $A$ containing $\OO$.

\begin{theorem}[Global extendibility of strongly slice-regular functions]\label{teo:extension}
Let $\OO\subseteq M$ be open and let $f\in\mathcal C^1(\OO)$. If $A$ is not associative, we assume that $f$ and $xf$ are $M$-admissible. Then the function $f$ is strongly slice-regular on $\OO$ if and only if it can be extended to a slice-regular function on the symmetric completion $\widetilde \OO$. If this is the case, the extension is unique.
\end{theorem}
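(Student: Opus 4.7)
The plan is to treat the easy direction by restriction, and to prove the hard direction by building the extension out of the zonal decomposition of Corollary~\ref{cor:zonal_decomposition}.

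For $(\Leftarrow)$, Proposition~\ref{pro:axially} promotes any slice-regular extension $\tilde f\in\sr(\widetilde\OO)$ to $\srs(\widetilde\OO)$, and both conditions (i) and (ii) descend to $\OO\subseteq\widetilde\OO$ because $\s_y\cap\OO\subseteq\s_y\cap\widetilde\OO=\s_y$. For uniqueness, the difference $\tilde f_1-\tilde f_2=\I(G)$ of two extensions is slice regular and vanishes on $\OO$. Since $\OO$ is open in $M$ and $\dim S=m-1\ge 1$, every sphere $\alpha+\beta\s_A$ inside $\widetilde\OO$ meets $\OO$ in at least two distinct points $\alpha+\beta I_0,\alpha+\beta I_1$ with $I_0\neq I_1$, so $G_1(\alpha+i\beta)+I_\ell G_2(\alpha+i\beta)=0$ for $\ell=0,1$; subtracting and using that $I_0-I_1\in M\subseteq\Q_A$ has $n(I_0-I_1)=2(1-\langle I_0,I_1\rangle)>0$, hence is invertible in $A$, forces $G\equiv 0$ on $D$ and therefore $\tilde f_1=\tilde f_2$ on $\widetilde\OO$.

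For $(\Rightarrow)$, I would set $h_1:=\cS^{-1}\dB(xf)$ and $h_2:=\cS^{-1}\dB f$ on $\OO$. The first step is to prove the pointwise identity $f=h_1-x^c h_2$ on $\OO$ using only condition~(i): for each $I\in S$, after rotating the (basis-independent) Cauchy--Riemann operator so that $I=v_1$, $L_I$-holomorphicity yields $\partial_1 f=v_1\partial_0 f$ on $\OO_I=\OO\cap\cc_I$; plugging this into the definitions of $\dB f$ and $\dB(xf)$ and using $v_iv_1+v_1v_i=0$ for $i\ge 2$ collapses the expressions to the claim on $\OO_I$, hence on $\OO=\bigcup_{I\in S}\OO_I$. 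Condition~(ii) then makes the assignment $\tilde h_i(z):=h_i(y)$ (for any $y\in\OO$ with $z\in\s_y$) well defined on $\widetilde\OO$, and I would set $\tilde f(z):=\tilde h_1(z)-z^c\tilde h_2(z)$. Writing $z=\alpha+\beta J$ shows $\tilde f=\I(\tilde F)$ for $\tilde F_1:=\tilde h_1(z)-\alpha\tilde h_2(z)$ and $\tilde F_2:=\beta\tilde h_2(z)$, which depend only on $(\alpha,\beta)$, so $\tilde f$ is a slice function on $\widetilde\OO=\OO_D$ where $D=\{\alpha+i\beta:\alpha+\beta I\in\OO\text{ for some }I\in S\}$.

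The main obstacle is then to verify that $\tilde F$ is holomorphic. At $(\alpha_0,\beta_0)\in D$ with $\beta_0\neq 0$, the same openness argument as for uniqueness produces two distinct $I_0,I_1\in S$ with $\alpha_0+\beta_0 I_\ell\in\OO$; applying (i) on each slice $\OO_{I_\ell}$ and using $f=\tilde F_1+I_\ell\tilde F_2$ there yields the two relations
\[
(\partial_\beta\tilde F_1+\partial_\alpha\tilde F_2)+I_\ell(\partial_\beta\tilde F_2-\partial_\alpha\tilde F_1)=0,\qquad \ell=0,1,
\]
whose difference, together with the invertibility of $I_0-I_1$ in $A$, forces both Cauchy--Riemann equations for $\tilde F$ at $(\alpha_0,\beta_0)$; real points follow by continuity. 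The subtle point is exactly this separation step: slice-wise $L_I$-holomorphicity on a single slice delivers only one constraint of the form $P+IQ=0$, so the argument genuinely needs two distinct imaginary units $I_0,I_1$ realised on a common sphere inside $\OO$, which is where the openness of $\OO$ in $M$ together with the hypothesis $\dim(M)>2$ enters the proof. In the non-associative case, the $M$-admissibility of $f$ and $xf$ keeps all relevant products inside the associative subalgebras generated by the $v_i$ and $I$ (by Artin's theorem), so the manipulations remain valid.
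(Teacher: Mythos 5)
Your skeleton is the paper's: recover the pointwise identity $f=h_1-x^ch_2$ with $h_1=\cS^{-1}\dB(xf)$, $h_2=\cS^{-1}\dB f$, use condition (ii) to extend $h_1,h_2$ to $\widetilde\OO$ by constancy on spheres, read off a stem function, and exploit two distinct imaginary units on a common sphere (available because $\OO$ is open and $\dim M>2$) to pin down holomorphy. Your direct verification of the Cauchy--Riemann system at every point of $D\setminus\R$ is a legitimate variant of the paper's route, which instead proves $\difbar\widetilde f=0$ on $\OO\setminus\R$ and propagates it by analytic continuation. However, there are genuine gaps in the execution.

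The main one is regularity: you differentiate $\tilde F_1,\tilde F_2$ without knowing they are differentiable. Since $f$ is only assumed $\mathcal C^1$, the functions $h_1=\cS^{-1}\dB(xf)$ and $h_2=\cS^{-1}\dB f$ are a priori merely continuous, so the displayed relations $(\partial_\beta\tilde F_1+\partial_\alpha\tilde F_2)+I_\ell(\partial_\beta\tilde F_2-\partial_\alpha\tilde F_1)=0$ are not yet meaningful. This is precisely the step the paper's proof devotes a paragraph to: from $f_{I_0}(x_{I_0})=\tilde F_1+I_0\tilde F_2$ and $f_{I_1}(x_{I_1})=\tilde F_1+I_1\tilde F_2$ one solves $\tilde F_2=(I_0-I_1)^{-1}\bigl(f_{I_0}(x_{I_0})-f_{I_1}(x_{I_1})\bigr)$, and since the slice restrictions are holomorphic, hence real analytic, so are $\tilde F_1,\tilde F_2$ off $\R$. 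You have the ingredients (two units on a common sphere, invertibility of $I_0-I_1$), but this regularity step must precede the CR computation. A second gap is ``real points follow by continuity'': continuity of $\tilde F$ on $D$ together with holomorphy on $D\setminus\R$ does not by itself give differentiability, let alone the CR equations, on $D\cap\R$; one needs a removability theorem, and the paper explicitly invokes Painlev\'e's theorem for the $\C_I$-components. Finally, in the non-associative case your justification via Artin's theorem is incorrect for the terms $v_i(v_1(\partial_i f))+v_1(v_i(\partial_i f))$ with $i\ne 1$: three elements are involved, and neither Artin nor $M$-admissibility makes them associate. The needed cancellation does hold, but because the associator is alternating, so $(v_i,v_1,a)+(v_1,v_i,a)=0$ for every $a\in A$; alternatively, simply quote Theorem \ref{teo:difference} and Proposition \ref{pro:gamma}(b), which is how the paper packages exactly this computation.
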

\begin{proof}
Assume that $f$ is strongly slice-regular on $\OO$. Then $\difbar f=\difbar(xf)=0$ on $\OO\setminus\R$, since the restrictions $f_I$ and $(xf)_I$ are holomorphic on $(\OO_I, L_I)$.  From Theorem \ref{teo:difference} and Proposition \ref{pro:gamma}(b) we get on $\OO\setminus\R$
\begin{align*}
\dB f&=-(2\IM(x))^{-1}\Gamma_\B f \quad\text{and}\\
\dB (xf)&=-(2\IM(x))^{-1}\Gamma_\B (xf)=
\tfrac{1-m}2f(x)-(2\IM(x))^{-1}x^c\,\Gamma_\B f.
\end{align*}
Since $\IM(x)x^c=x^c\IM(x)$, we obtain $\dB(xf)=\cS f(x)+x^c\dB f$, which is equivalent to the equality $f(x)=h_1(x)-x^c h_2(x)$ for every $x\in\OO\setminus\R$, where $h_1:=\cS^{-1}\dB(xf)$ and $h_2:=\cS^{-1}\dB f$. The functions $h_1,h_2$ are  continuous on $\OO$, and they are zonal since $f$ is strongly slice-regular. 
The zonal decomposition 
\begin{equation}\label{eq:zonal_decomposition}
f=h_1-x^c h_2
\end{equation}
holds on the whole set $\OO$ by continuity. 

We now show that $h_1$ and $h_2$ are real analytic on $\OO\setminus\R$. 
Let 
$\Phi:\C\times S\to M$ be the map defined by $\Phi(\alpha+i\beta,K):=\alpha+K\beta$. 
Given $y=\tilde x_0+ I\tilde\beta\in \OO\setminus\R$, let $W:=\Phi(V\times V')\subset \OO\setminus\R$ be the neighbourhood of $y$ obtained by an open neighbourhood $V$ of $\tilde x_0+i\tilde\beta$ in $\C\setminus\R$ and an open neighbourhood $V'$ of $I$ in $S$. Let $J\in V'\setminus\{I\}$ be fixed. 
Let $x=x_0+K\beta\in W$ and consider the points $x_I:=x_0+I\beta$, $x_J:=x_0+J\beta$ in $W$. 
From the equalities
\begin{equation}\label{eq:fI}
f_I(x_I)=h_1(x)-x^c_Ih_2(x), \quad f_J(x_J)=h_1(x)-x^c_Jh_2(x)
\end{equation}
we deduce
\[
h_2(x)=(x^c_J-x^c_I)^{-1}\left(f_I(x_I)-f_J(x_J)\right)=(I-J)^{-1}\beta^{-1}\left(f_I(x_I)-f_J(x_J)\right).
\]
Since $f_I$ and $f_J$ are holomorphic, the preceding formula shows that $h_2$ is real analytic on $W$, and therefore on $\OO\setminus\R$. Since from \eqref{eq:fI} we have $h_1(x)=f_I(x_I)+x^c_I h_2(x)$, also $h_1$ and $f$ are real analytic on $\OO\setminus\R$. 

Let $\widetilde{h_1}$ and $\widetilde{h_2}$ be the functions defined on the symmetric completion $\widetilde{\OO}$ obtained by imposing their constancy on the spheres $\s_y$ for each $y\in \OO$, and set $\widetilde f(x):=\widetilde{h_1}(x)-x^c\widetilde{h_2}(x)$. From \eqref{eq:zonal_decomposition}, it follows that $\widetilde f=f$ on $\OO$. The functions $\widetilde{h_1}$ and $\widetilde{h_2}$ are real analytic on $\widetilde \OO\setminus\R$. This can be obtained as before, from the formula  
\[
\widetilde{h_2}(x)=(x^c_J-x^c_I)^{-1}\left(f_I(x_I)-f_J(x_J)\right)=(I-J)^{-1}\beta^{-1}\left(f_I(x_I)-f_J(x_J)\right),
\]
valid for every $x=x_0+K\beta\in \widetilde W=\Phi(V\times \s_A)$, 
where $I$ and $J$ are fixed units chosen as above.  
Since $\widetilde{\OO}\cap\R=\OO\cap\R$, $\widetilde{h_1}$ and $\widetilde{h_2}$ are continuous on $\widetilde{\OO}$. 
From the formula $\widetilde{h_1}(x)=f_I(x_I)+x^c_I \widetilde{h_2}(x)$, we get that also $\widetilde{h_1}$ and $\widetilde f$ are real analytic on $\widetilde \OO\setminus\R$ and continuous on $\widetilde{\OO}$. 

Let $D:=\{x_0+i\beta\in \C\;|\;\text{there exists }I\in S \text{ such that }x_0+I\beta\in \OO\}$. Then $\Omega_D=\widetilde{\OO}$.
In view of the axial symmetry of $\widetilde{h_1}$ and $\widetilde{h_2}$, given $x=x_0+I\beta\in \widetilde{\OO}$ and $z=x_0+i\beta\in D$, the function
\[G(z)=G_1(z)+iG_2(z),\]
with $G_1(z):=\widetilde{h_1}(x)-x_0\widetilde{h_2}(x)$ and $G_2(z)=\beta\widetilde{h_2}(x)$, 
is a continuous stem function on $D$ that induces $\widetilde f$. If $x\in\widetilde{\OO}\cap\R$, then $\I(G)(x)=G_1(x_0)=h_1(x_0)-x_0h_2(x_0)=f(x)$, while if $x\in\widetilde{\OO}\setminus\R$, it holds
\[
\I(G)(x)=G_1(z)+\frac{\IM(x)}{\|\IM(x)\|}G_2(z)=\widetilde{h_1}(x)-x_0\widetilde{h_2}(x)+\IM(x)\widetilde{h_2}(x)=\widetilde f(x).
\] 

We now prove that the slice function $\widetilde f$ is slice-regular on $\OO_D=\widetilde \OO$. Since $\difbar\widetilde f=\difbar f=0$ on $\OO\setminus\R$, from the real analyticity of $\difbar\widetilde f$ and the fact that every connected component of $\widetilde \OO\setminus\R$ intersects at least one connected component of $\OO\setminus\R$, it follows that $\difbar\widetilde f=0$ on $\widetilde \OO\setminus\R$. This means that $(\widetilde f)_I$ is holomorphic on $\widetilde \OO_I\setminus\R$ for every $I\in\s_\hh$. In view of the continuity of $\widetilde f$, $(\widetilde f)_I$ is holomorphic on $\widetilde \OO_I$ for all $I$ (by 
PainlevÃ©'s theorem applied to the $\C_I$-components of $(\widetilde f)_I$), and then $\widetilde f\in\sr(\widetilde \OO)$. In particular, $\widetilde f$ is real analytic on the whole set $\widetilde \OO$.

Conversely, if $f\in\mathcal C^1(\OO)$ has an extension $\widetilde f\in\sr(\widetilde \OO)$, then Corollary \ref{cor:zonal_decomposition} implies that $f$ is strongly slice-regular on $\OO$. 

The uniqueness of the extension is immediate from the identity principle for slice-regular functions on axially symmetric domains (see \cite[Thm.\ 4.11]{AlgebraSliceFunctions}).
\end{proof}

\begin{remark}
If $f\in\sr^+(\OO)$, then Proposition \ref{pro:gamma_slice}(b) gives $\dB(xf)=\cS(\vs{(\widetilde f)}(x)+x_0 {(\widetilde f)}'_s(x))$ and $\dB f=\cS(\widetilde f)'_s(x)$ for every $x\in \OO$, where $\widetilde f$ is the extension of Theorem \ref{teo:extension}. 
\end{remark}

\begin{remark}\label{rem:localnotstrong}
If $\OO$ is not axially symmetric in $M$, it can happen that a locally slice-regular function does not extend slice-regularly to $\widetilde \OO$ and therefore $\sr_{loc}(\OO)\ne\sr^+(\OO)$ (for an example in the quaternionic case $A=M=\hh$, see \cite{DouRenI,DouRenSabadini} and \cite[Example 3.4]{GS2020geometric}). 
If the intersections $\s_y\cap \OO$ are connected for every $y\in\OO$, then $\sr_{loc}(\OO)=\sr^+(\OO)$ by continuity of the functions $\dB f$ and $\dB(xf)$. For example, this is true for every open ball in $M$ w.r.t.\ the norm $\|\ \|$.
\end{remark}

\begin{theorem}[Local extendibility of locally slice-regular functions]\label{thm:localext}
Let $\OO\subseteq M$ be open and let $f\in\mathcal C^1(\OO)$. If $A$ is not associative, we assume that $f$ and $xf$ are $M$-admissible. Then $f\in\sr_{loc}(\OO)$ if and only if for each $y\in \OO$ there exists an open neighbourhood $\OO_y\subseteq\OO$ of $y$ and a slice-regular function $g_y\in\sr(\widetilde{\OO_y})$ that extends the restriction $f_{|\OO_y}$.
\end{theorem}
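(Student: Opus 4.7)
The plan is to deduce this as a local version of the global extendibility Theorem \ref{teo:extension}, using the observation in Remark \ref{rem:localnotstrong} that local slice-regularity and strong slice-regularity coincide on open sets $U \subseteq M$ whose slice intersections $\s_y \cap U$ are connected. The key geometric point is that an open ball in $M$ (w.r.t.\ the norm $\|\ \|$ of Lemma \ref{lem:gis}) has exactly this property: for any $z$ in the ball, $\s_z \cap B$ is the intersection of an $(m-1)$-sphere in $M$ with an open Euclidean ball, hence a spherical cap (or the whole sphere), and in particular connected.

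For the ``only if'' direction, fix $y \in \OO$ and choose $\OO_y$ to be an open ball centered at $y$ and contained in $\OO$. By the remark above, $\s_z \cap \OO_y$ is connected for every $z \in \OO_y$, so the local constancy of $\dB f$ and $\dB(xf)$ on $\s_z \cap \OO$ forces actual constancy on $\s_z \cap \OO_y$. The restriction $f_{|\OO_y}$ thus lies in $\sr^+(\OO_y)$, and Theorem \ref{teo:extension} (together with the $M$-admissibility hypothesis, which passes to the restriction) provides a unique slice-regular extension $g_y \in \sr(\widetilde{\OO_y})$.

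For the ``if'' direction, suppose such local extensions $g_y$ exist. Fix $y \in \OO$; we must check both parts of Definition \ref{def:ssr} in a local-constancy form. Condition (i): for any $I \in S$ with $\OO_I \neq \emptyset$ and any $p \in \OO_I$, on $\OO_y \cap \C_I$ we have $f = g_y$, and since $g_y$ is slice-regular its restriction to $\widetilde{\OO_y} \cap \C_I$ is $L_I$-holomorphic; hence $f_I$ is holomorphic in a neighbourhood of $p$ inside $\OO_I$, which is exactly the content of (i). Condition (ii) in its local form: applying Corollary \ref{cor:zonal_decomposition} to $g_y$ on $\widetilde{\OO_y}$ gives that $\dB g_y$ and $\dB(x g_y)$ are zonal on $\widetilde{\OO_y}$; restricting back to $\OO_y$ (on which $f = g_y$) yields that $\dB f$ and $\dB(xf)$ are constant on $\s_z \cap \OO_y$ for every $z \in \OO_y$. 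Since $\OO_y$ is a neighbourhood of $y$, this gives local constancy on $\s_y \cap \OO$, and varying $y$ shows $f \in \sr_{loc}(\OO)$.

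The only nontrivial step is the geometric lemma that the $\|\ \|$-balls in $M$ have connected sphere-slice intersections; everything else is a direct translation between ``locally'' (in $\OO$) and ``globally'' (on a suitably chosen axially symmetric hull $\widetilde{\OO_y}$) via Theorem \ref{teo:extension}. Uniqueness of each $g_y$ on $\widetilde{\OO_y}$ follows as in Theorem \ref{teo:extension} from the identity principle for slice-regular functions on axially symmetric domains, though uniqueness is not asserted in the statement.
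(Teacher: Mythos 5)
Your proof is correct and follows essentially the same route as the paper: reduce to the global extendibility theorem on a neighbourhood whose sphere-slices $\s_z\cap\OO_y$ are connected, so that local constancy of $\dB f$ and $\dB(xf)$ upgrades to zonality, and observe that the converse follows from Corollary \ref{cor:zonal_decomposition} applied to the local extensions. The only (harmless) difference is that you use $\|\ \|$-balls uniformly for all $y$, whereas the paper uses neighbourhoods of the form $\Phi(V\times V')$ for non-real points and balls only for real points; your ball argument is exactly the one endorsed in Remark \ref{rem:localnotstrong}.
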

\begin{proof}
Let $f\in\sr_{loc}(\OO)$. If $y\in \OO\setminus\R$, we can find an open neighbourhood $\OO_y\subseteq\OO$ of $y$ such that $f_{|\OO_y}\in\sr^+(\OO_y)$ and apply Theorem \ref{teo:extension}. For example, with the same notations used in the proof of Theorem \ref{teo:extension}, if $y=\tilde x_0+ I\tilde\beta\in \OO\setminus\R$ we can take $\OO_y:=\Phi(V\times V')\subset \OO\setminus\R$ with $V$ open neighbourhood of $\tilde x_0+i\tilde\beta$ in $\cc\setminus\R$ and $V'$ open connected neighbourhood of $I$ in $S$. Then for every $x=x_0+K\beta\in \OO_y$, the set $\s_x\cap \OO_y=\Phi(\{x_0+i\beta\}\times V')$ is connected. If $y\in \OO\cap\R$, then any open ball $\OO_y\subseteq\OO$ centred in $y$ is axially symmetric, and therefore $f_{|\OO_y}\in\sr(\OO_y)=\sr(\widetilde{\OO_y})$. The converse is immediate. 
\end{proof}

If $f\in\sr_{loc}(\OO)$ and $y\in\OO$, then $\dB f(x)=\cS\sd{(g_y)}(x)$ for all $x\in\OO_y$ (Proposition \ref{pro:gamma_slice}(b)), where $g_y$ is the local extension at $y$ of Theorem \ref{thm:localext}. 
Since $g_y$ is real analytic \cite[Prop.\ 7]{AIM2011}, also $\dB f$ is real analytic on $\OO$. 
Since $\dim(M)>2$ and $c_m<0$, we can then extend to every locally slice-regular function the definitions of spherical derivative and degenerate set.

\begin{definition}\label{def:sd}
The \emph{spherical derivative} of a locally slice-regular function $f\in\mathcal C^1(\OO)$ is the real analytic function $\sd f:=c_m^{-1}\,\dB f$on $\OO$. Its zero set $D_f:=\{x\in\OO\,|\, \dB f(x)=0\}$ is called the \emph{degenerate set} of $f$. 
\end{definition}

Note that when $\OO$ is axially symmetric and $f\in\sr(\OO)$, the spherical derivative is defined, a priori, on $\OO\setminus\R$ \cite[Def.\ 6]{AIM2011}. 
Definition \ref{def:sd} gives the continuous extension of $\sd f$ to the whole $\OO$ and $D_f$ is the closure in $\OO$ of the zero set of $\sd f$.  

\begin{remark}
In the quaternionic case, with $A=M=\hh$, it has been proved  \cite{GS2020local} that if $\OO$ is a \emph{slice domain} (i.e., $\OO\cap\R\ne\emptyset$ and every slice $\OO\cap\cc_I$ is a domain in $\cc_I$), then every function $f\in\mathcal C^1(\OO)$ satisfying condition (i) in Definition \ref{def:ssr} (i.e., \emph{slice regular} on $U$ in the sense of the original definition \cite{GeSt2007Adv}), has the local slice-regular extension property, i.e., it is locally slice-regular. 
In the same paper \cite{GS2020local} it has been identified a large class of domains, the one of \emph{simple domains}, such that $f$ is also strongly slice-regular. This class includes, for example, every convex slice domain.
Observe that every locally slice-regular function is also a \emph{locally slice function} in the sense of \cite[Def.\ 3.6]{GS2020geometric}.

The function introduced in \cite[Example 2.5]{DouRenI} (see also \cite[Example 2.10]{GS2020geometric}) has a local but not a global slice-regular extension to the symmetric completion. In view of Theorems \ref{teo:extension} and \ref{thm:localext}, that function is locally slice-regular but not strongly slice-regular.
\end{remark}

\begin{remark}
If one assumes $\dim(M)=2$, then $M=\cc_I$ for some $I\in\s_A$ and the definition of local slice-regularity given in Definition \ref{def:ssr} reduces to $L_I$-holomorphy.  Every holomorphic function on $\OO\subseteq M$ satisfies the local extendibility property (and not necessarily the global one if 
$\OO$ is not axially symmetric, i.e., $\OO\ne\OO^c:=\{z\in M\,|\,z^c\in\OO\}$). In this sense, in the setting of slice analysis on hypercomplex subspaces, local slice-regularity can be seen as the most faithful generalization of the concept of holomorphy. 
\end{remark}


\subsection{Quaternionic local slice analysis}\label{sec:quatlocal}

In this section we review some results of local slice analysis in its original setting, when the hypercomplex subspace $M$ is the whole skew-field $\hh$ of quaternions. 
Let $\B=\{1,i,j,k\}$ be the standard basis of $\hh$. Then the operator $\dB$ is the Cauchy-Riemann-Fueter operator. 
By \emph{local slice analysis} we mean the set of properties satisfied by local slice-regular functions. Thanks to the Local Extendibility Theorem (Theorem \ref{thm:localext}), every \emph{local} property satisfied by slice-regular functions, originally proved on axially symmetric domains, remains valid for locally slice-regular functions defined on any open subset of $\hh$. Of course, this principle is not restricted to the quaternionic setting, but can be rephrased over any hypercomplex subspace.

In particular, given any open set $\OO\subseteq\hh$, without any assumptions about axial symmetry or non-empty intersection with the real axis, we can state that every quaternionic $f\in\sr_{loc}(\OO)$ is real analytic on $\OO$ and it is sense-preserving, i.e., its Jacobian determinant $\det (J_f)$ is never negative (see \cite{JGEA2021}). 
Moreover, the \emph{singular set}  
\[
N_f:=\{x\in\OO\,|\, \det(J_f(x))=0\}
\]
of $f$ is a real analytic subset of $\OO$. 

Another result one can obtain is the Quasi-open Mapping Theorem (see \cite[Theorem\ 6.5]{JGEA2021} for the case of strongly slice-regular functions). We recall that a continuous map $g$ between topological spaces $X$ and $Y$ is called \emph{quasi-open} if, for each $y\in g(X)$ and for each open set $U$ in $X$ that contains a compact connected component of $g^{-1}(y)$, $y$ is in the interior of $g(U)$. If $g$ is quasi-open and each of its fibers has a compact component then $g(X)$ is open in $Y$ 
(see e.g.\ \cite{TitusYoung}).

We will denote by $\SC_{loc}(\OO)$ the set of functions $f\in\sr_{loc}(\OO)$ such that $\dif f=0$ on $\OO\setminus\R$. Given $y\in\OO$, Theorem \ref{thm:localext} gives a neighbourhood $\OO_y$ of $y$ and  $g_y\in\sr(\widetilde{\OO_y})$ that extends $f_{|\OO_y}$.
Then $f\in\SC_{loc}(\OO)$ if and only if for every $y\in\OO$ it holds $\dd{g_y}{x^c}=0$ and $\dd{g_y}x=0$ on $\widetilde{\OO_y}$, i.e., $g_y$ is slice-constant on $\widetilde{\OO_y}$.

Given $f\in\sr_{loc}(\OO)$ not locally constant and $y\in\OO$, let $\OO_y$ and  $g_y\in\sr(\widetilde{\OO_y})$ be as above. 
We will denote by $W_f$ the set of all the \emph{wings} of $f$, i.e., the union 
\[
W_f=\cup_{y\in\OO}(W_{g_y}\cap\OO_y)
\]
of the family of wings of $g_y$ in $\OO_y$. These are real analytic submanifolds of $\OO$ of dimension 2 contained in particular fibers $f^{-1}(c)\supseteq g_y^{-1}(c)\cap\OO_y$ of $f$, for some $c\in\hh$. If $\OO_y\cap\R\ne\emptyset$, then $W_{g_y}=\emptyset$ (see \cite[\S5]{JGEA2021} for definition and properties of the wings of a slice-regular function). Observe that when $\OO$ is a slice domain, the wing set $W_f$ is empty. This follows from the properties of the zero sets of slice-regular functions on slice domains \cite[Cor.\ 5.3]{GS2020geometric}.

\begin{theorem}[Quasi-open Mapping Theorem]\label{thm:quasiopen}
Let $f\in\sr_{loc}(\OO)\setminus\SC_{loc}(\OO)$. Then
\begin{enumerate}
 \item $f$ is quasi-open.
  \item The restriction $f|_{\OO\setminus \left(D_f\cup W_f\right)}$ is open.
\end{enumerate}
\end{theorem}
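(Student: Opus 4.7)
The plan is to deduce both assertions from the Quasi-open Mapping Theorem for strongly slice-regular functions on axially symmetric domains, \cite[Theorem 6.5]{JGEA2021}, by transporting it pointwise through the Local Extendibility Theorem~\ref{thm:localext}. For every $y\in\OO$ I fix a neighbourhood $\OO_y\subseteq\OO$ of $y$ and a slice-regular extension $g_y\in\sr(\widetilde{\OO_y})$ of $f|_{\OO_y}$ furnished by that theorem. Three comparisons will be used throughout: by Proposition~\ref{pro:gamma_slice}(b) one has $\dB f=c_m\,\sd{g_y}$ on $\OO_y$, so $D_f\cap\OO_y=D_{g_y}\cap\OO_y$; by the very definition of $W_f$, the wing set $W_{g_y}\cap\OO_y$ is contained in $W_f\cap\OO_y$; finally, by the characterization of $\SC_{loc}(\OO)$ recalled just before the statement, the hypothesis $f\notin\SC_{loc}(\OO)$ allows me, after discarding and treating separately the open subset of $\OO$ on which $f$ is locally slice-constant (where the two claims reduce to easy rigidity statements), to assume $g_y\notin\SC(\widetilde{\OO_y})$ for every $y$ under consideration. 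In particular, by Proposition~\ref{pro:axially}, each $g_y$ is strongly slice-regular on the axially symmetric domain $\widetilde{\OO_y}$ and \cite[Theorem 6.5]{JGEA2021} applies to it.

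For item (2), let $V\subseteq\OO\setminus(D_f\cup W_f)$ be open and pick $y\in V$. Then $V\cap\OO_y$ is open in $\widetilde{\OO_y}$ and, by the first two comparisons, disjoint from $D_{g_y}\cup W_{g_y}$. Applying item (2) of \cite[Theorem 6.5]{JGEA2021} to $g_y$ yields that $g_y(V\cap\OO_y)=f(V\cap\OO_y)$ is an open neighbourhood of $f(y)$ in $A$ contained in $f(V)$. Letting $y$ vary shows that $f(V)$ is open, which is exactly the openness of $f|_{\OO\setminus(D_f\cup W_f)}$.

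For item (1), let $y_0\in f(\OO)$ and let $U\subseteq\OO$ be open, containing a compact connected component $K$ of $f^{-1}(y_0)$. Since $K$ is a compact component of the closed set $f^{-1}(y_0)$, I can shrink $U$ to an open $U'$ with $K\subseteq U'$, $\overline{U'}$ compact in $U$, and $f^{-1}(y_0)\cap\partial U'=\emptyset$. Cover $K$ by finitely many $\OO_{y_1},\dots,\OO_{y_n}\subseteq U'$ with $y_i\in K$; each $g_{y_i}$ is quasi-open on $\widetilde{\OO_{y_i}}$ by item (1) of \cite[Theorem 6.5]{JGEA2021}. The crucial step is to exhibit, for some index $i$, a compact connected component of $g_{y_i}^{-1}(y_0)$ that is entirely contained in $\OO_{y_i}$. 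Once this is done, $f=g_{y_i}$ on $\OO_{y_i}$ and the quasi-openness of $g_{y_i}$ places $y_0$ in $\operatorname{int}(g_{y_i}(\OO_{y_i}))\subseteq\operatorname{int}(f(U))$, as required.

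The main obstacle is precisely this last localization step: \emph{a priori}, the fiber $g_{y_i}^{-1}(y_0)$ lives in the possibly strictly larger axially symmetric completion $\widetilde{\OO_{y_i}}$, where extra components may appear through the representation formula, so a component that is compact in $K\subseteq\OO$ need not correspond to a compact component inside $\widetilde{\OO_{y_i}}$. I expect to overcome this by choosing each $\OO_{y_i}$ of the tubular form $\Phi(V_i\times V_i')$ from the proof of Theorem~\ref{teo:extension}, so small that any extended fiber branch leaving $\OO_{y_i}$ is forced, by the fiber structure of slice-regular functions described in \cite[\S5-6]{JGEA2021} and the boundary condition $f^{-1}(y_0)\cap\partial U'=\emptyset$, to exit $U'$ and hence to be disconnected from $K$ inside $\widetilde{\OO_{y_i}}$. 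The compactness of $\overline{U'}$ then guarantees that the component of $g_{y_i}^{-1}(y_0)$ containing $y_i$ is closed and bounded in $\OO_{y_i}$, completing the argument.
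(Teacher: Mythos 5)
Your argument for item (2) is essentially the one in the paper: restrict to the tubular neighbourhoods $\OO_y$ of Theorem~\ref{thm:localext}, use $D_f\cap\OO_y=D_{g_y}\cap\OO_y$ and the (easy) inclusion $W_{g_y}\cap\OO_y\subseteq W_f$, and invoke the openness of $g_y$ on $\widetilde{\OO_y}\setminus(D_{g_y}\cup W_{g_y})$ from \cite[Theorem~6.5]{JGEA2021}. That part is fine.

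Item (1) is where the proposal breaks down, and you have correctly identified the weak point yourself: the ``crucial step'' of producing an index $i$ for which $g_{y_i}^{-1}(y_0)$ has a compact connected component entirely contained in $\OO_{y_i}$ cannot be carried out in general. The compact component $K$ of $f^{-1}(y_0)$ is a global object --- typically a whole $2$-sphere $\s_x$, or a compact piece of a wing --- and it need not lie in any single tubular neighbourhood $\Phi(V_i\times V_i')$. For each $i$ the trace $K\cap\OO_{y_i}$ is then a relatively open, non-closed piece of $K$ whose closure meets $\partial\OO_{y_i}$, so the component of $g_{y_i}^{-1}(y_0)$ through $y_i$ is not compact in $\OO_{y_i}$; and although $\s_x$ may well be a compact component of $g_{y_i}^{-1}(y_0)$ inside the completion $\widetilde{\OO_{y_i}}$, applying quasi-openness of $g_{y_i}$ there forces you to use open sets not contained in $\OO_{y_i}$, on which $g_{y_i}$ no longer agrees with $f$, so you do not land inside $f(U)$. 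Shrinking the $\OO_{y_i}$ only makes this worse. No amount of care with the boundary condition $f^{-1}(y_0)\cap\partial U'=\emptyset$ repairs this, because the obstruction is that quasi-openness, unlike openness, is not a property you can verify one chart at a time on the fibers.

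The paper avoids this entirely by proving (1) through a genuinely \emph{local} criterion that globalizes for free: by \cite[Theorem~6.4]{JGEA2021} applied to the local extensions $g_y$, the singular set $N_f=\{\det J_f=0\}$ is a real-analytic subset of $\OO$ of dimension less than four (here the hypothesis $f\notin\SC_{loc}(\OO)$ enters), and $\det J_f\ge0$ everywhere since $f$ is locally sense-preserving; the Titus--Young Jacobian condition for interiority \cite{TitusYoung} then yields quasi-openness of $f$ directly, with no need to match fiber components of $f$ with fiber components of the extensions. If you want to salvage your approach you would have to replace the chart-by-chart argument for (1) with such a Jacobian-based one; as written, item (1) is not proved.
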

\begin{proof}
Since $f$ is not slice-constant, the real analytic set $N_f$ has dimension less then four. This follows from \cite[Theorem\ 6.4]{JGEA2021} applied to the local extensions of $f$ provided by Theorem \ref{thm:localext}. 
Since the Jacobian does not change sign on $\OO$, it follows from results of Titus and Young \cite{TitusYoung} that $f$ is quasi-open.

For any $y\in\OO$, let $\OO_y$ and  $g_y\in\sr(\widetilde{\OO_y})$ be as in the Local Extendibility Theorem \ref{thm:localext}. It holds $D_f\cap\OO_y=D_{g_y}\cap\OO_y$ and $W_f\cap\OO_y=W_{g_y}\cap\OO_y$. The first equality is immediate from definitions. The second one follows from the characterization of the wings of $g_y\in\sr(\widetilde{\OO_y})$: $g_y$ has a wing $W_{g_y,c}\subseteq W_{g_y}$ if and only if $N(g_y-c)\equiv0$ \cite[Cor.\ 5.3]{JGEA2021}. Therefore, if $x\in W_f\cap\OO_y$, then there exists $z\in\OO$ such that $x\in W_{g_z}\cap\OO_z$, with $g_z\equiv f\equiv g_y$ on the intersection $\OO_z\cap\OO_y\ni x$. Then $N(g_y-c)=N(g_z-c)\equiv0$ on $\OO_z\cap\OO_y$, and $x\in W_{g_y}\cap\OO_y$.
The other inclusion $W_f\cap\OO_y\supseteq W_{g_y}\cap\OO_y$ is obvious. 

If $U$ is an open subset of $\OO\setminus(D_f\cup W_f)$, then $U\cap\OO_y$ is an open subset of $\OO_y\setminus(D_{g_y}\cup W_{g_y})$ and in view of \cite[Theorem\ 6.5]{JGEA2021} $g_y$ is an open map when restricted to $\widetilde{\OO_y}\setminus(D_{g_y}\cup W_{g_y})$. Therefore 
\[
f(U)=\cup_{y\in\OO}f(U\cap\OO_y)
\]
is open in $\hh$. This proves that $f|_{\OO\setminus \left(D_f\cup W_f\right)}$ is open.
\end{proof}

In the case of slice domains, the Open Mapping Theorem (point (2) of the previous Theorem) was proved in \cite[Theorem\ 10.5]{GS2020geometric}. 



Two further results of local slice analysis are related to the Almansi-type decomposition obtained in \cite{AlmansiH}. 
Let $\mathbb B$ be the open unit ball in $\R^4$ and let $\s^3=\partial\mathbb B$ be the three-dimensional unit sphere.  Let $\sigma$  denote the normalized rotation-invariant surface-area measure on the unit sphere $\SS^3$ of $\hh\simeq\R^4$, such that $\sigma(\SS^3)=1$.  

\begin{corollary}[Mean value formula
]\label{cor:mvf_strongly}
Let $f\in\sr_{loc}(\OO)$. Assume that the open ball  $B(a,r)$ with centre $a\in\OO$ and radius $r$ has closure $\overline{B(a,r)}$ contained in $\OO$.  Then
\begin{equation}\label{eq:MVF}
f(a)=\int_{\SS^3}f(a+r\zeta)d\sigma(\zeta)-r \int_{\SS^3}\overline\zeta\, \dcf f(a+r\zeta)d\sigma(\zeta).
\end{equation}
\end{corollary}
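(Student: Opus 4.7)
The plan is to reduce the formula to the classical mean value property for $\HH$-valued harmonic functions via the zonal decomposition $f=h_1-x^ch_2$. Since $m=\dim(\HH)-1=3$, we have $\cS=-1$, and I would introduce
\[
h_1:=-\dB(xf),\qquad h_2:=-\dB f,
\]
defined pointwise on $\OO$ directly from $f$ (no axial symmetry required for these definitions).

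The first (and main) step is to show that $h_1$ and $h_2$ are harmonic on $\OO$ and that $f(x)=h_1(x)-x^ch_2(x)$ holds throughout $\OO$. This is exactly the situation covered by Corollary \ref{cor:zonal_decomposition}, but on a set that need not be axially symmetric. The bridge is the Local Extendibility Theorem \ref{thm:localext}: each $y\in\OO$ has a neighbourhood $\OO_y$ on which $f$ agrees with some $g_y\in\sr(\widetilde{\OO_y})$, and Corollary \ref{cor:zonal_decomposition} applied to $g_y$ identifies the corresponding $h_1,h_2$ with the zonal components of $g_y$ on $\OO_y$. The harmonicity of those components is the content of the quaternionic Almansi-type theorem recalled in the remark after Corollary \ref{cor:zonal_decomposition}; since harmonicity is a local property, it transfers to $h_1,h_2$ on all of $\OO$, and in particular on the closed ball $\overline{B(a,r)}$.

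Once this is in hand, the hypothesis $\overline{B(a,r)}\subset\OO$ lets me invoke the classical mean value property componentwise for the $\HH$-valued harmonic functions $h_1,h_2$, obtaining
\[
\int_{\SS^3}h_j(a+r\zeta)\,d\sigma(\zeta)=h_j(a),\qquad j=1,2.
\]
Writing $(a+r\zeta)^c=a^c+r\overline\zeta$ and integrating $f(a+r\zeta)=h_1(a+r\zeta)-(a^c+r\overline\zeta)\,h_2(a+r\zeta)$ over $\zeta\in\SS^3$, the constant $a^c$ factors out on the left (left multiplication by a fixed quaternion is $\R$-linear, hence commutes with the $\R$-valued integration), yielding
\[
\int_{\SS^3}f(a+r\zeta)\,d\sigma(\zeta)=h_1(a)-a^ch_2(a)-r\int_{\SS^3}\overline\zeta\,h_2(a+r\zeta)\,d\sigma(\zeta)=f(a)-r\int_{\SS^3}\overline\zeta\,h_2(a+r\zeta)\,d\sigma(\zeta).
\]
Substituting $h_2=-\dcf f$ and rearranging produces exactly \eqref{eq:MVF}.

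The main obstacle is the preparatory step of transporting the Almansi-type harmonicity of the zonal components from axially symmetric domains, where it is established in \cite{AlmansiH}, to the possibly non-symmetric set $\OO\supset \overline{B(a,r)}$ via Theorem \ref{thm:localext}. All subsequent manipulations are routine rearrangements of the harmonic mean value property combined with the $\R$-linearity of the quaternion-valued integral.
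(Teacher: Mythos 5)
Your proof is correct, and it reaches the formula by a route that differs from the paper's in its organization. The paper's own proof is shorter: it chooses $r'>r$ with $\overline{B(a,r)}\subset B(a,r')\subset\OO$, notes (Remark \ref{rem:localnotstrong}) that on the ball $B(a,r')$ the sets $\s_y\cap B(a,r')$ are connected so that $\sr_{loc}(B(a,r'))=\sr^+(B(a,r'))$, applies the \emph{global} extendibility Theorem \ref{teo:extension} to produce a single slice-regular extension $\widetilde f\in\sr(\widetilde{B(a,r')})$, and then quotes the mean value formula \cite[Prop.~2]{AlmansiH} as a black box, the only work being the identification $\sd{(\widetilde f)}=-\dcf\widetilde f=-\dcf f$ on $B(a,r')$. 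You instead stay entirely local: you define $h_1=-\dB(xf)$ and $h_2=-\dB f$ directly on $\OO$, transport their harmonicity and the identity $f=h_1-x^ch_2$ from the axially symmetric sets $\widetilde{\OO_y}$ to all of $\OO$ via the local extendibility Theorem \ref{thm:localext} together with Corollary \ref{cor:zonal_decomposition} and the Almansi-type theorem of \cite{AlmansiH}, and then re-derive the integral identity from the classical componentwise mean value property of harmonic functions. Your final computation is sound (associativity of $\hh$ and the reality of $\sigma$ justify pulling the constant $a^c$ out of the integral, and $(a+r\zeta)^c=a^c+r\overline\zeta$ since $r\in\R$). What your version buys is self-containedness and slightly greater flexibility: you never need the global extension on an enlarged ball, nor the connectedness of the sphere-sections of $B(a,r')$, and you make explicit the elementary harmonic-analysis content hidden inside the cited proposition. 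What the paper's version buys is brevity, at the cost of leaning on \cite[Prop.~2]{AlmansiH} for exactly the computation you carry out by hand.
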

\begin{proof}
Let $r'>r$ such that $\overline{B(a,r)}\subset B(a,r')\subset \OO$.
In view of Remark \ref{rem:localnotstrong}, it holds $f\in\sr_{loc}(B(a,r'))=\sr^+(B(a,r'))$. Let $\widetilde f$ be  the slice-regular extension of $f_{|B(a,r')}$ on $\widetilde{B(a,r')}$ provided by Theorem \ref{teo:extension}. Since it holds $\sd {(\widetilde f)}=-\dcf\widetilde f=-\dcf f$ on $B(a,r')$, formula \eqref{eq:MVF} follows from \cite[Prop.\ 2]{AlmansiH} applied to $\widetilde f$.
\end{proof}

Let $P(x,\zeta)=(1-|x|^2)/|x-\zeta|^4$ be the Poisson kernel of the unit ball $\BB$ in $\R^4$.

\begin{corollary}[Poisson formula
]\label{cor:Poisson_strongly}
Let $f\in\srl(\OO)$. Assume that the open ball  $B(a,r)$ with centre $a\in \OO$ and radius $r$ has closure $\overline{B(a,r)}$ contained in $\OO$.  Then, for every $x\in\BB$, it holds
\[f(a+rx)=\int_{\SS^3}f(a+r\zeta)P(x,\zeta)d\sigma(\zeta)-r \int_{\SS^3}(\overline\zeta-\overline x)\,\dcf f(a+r\zeta) P(x,\zeta)d\sigma(\zeta).
\]
\end{corollary}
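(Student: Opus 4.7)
The plan is to mirror the proof of the mean value formula (Corollary~\ref{cor:mvf_strongly}): extend $f$ slice-regularly to an axially symmetric neighbourhood, decompose the extension into harmonic components via the zonal decomposition, and then apply the classical Poisson integral formula in $\R^4$ to each component.

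First I would choose $r' > r$ with $\overline{B(a,r)} \subset B(a,r') \subset \OO$. Since open balls in $\hh$ have connected spherical fibres (Remark~\ref{rem:localnotstrong}), $\sr_{loc}(B(a,r')) = \sr^+(B(a,r'))$, and Theorem~\ref{teo:extension} produces a unique slice-regular extension $\widetilde f \in \sr(\widetilde{B(a,r')})$ of $f|_{B(a,r')}$. Corollary~\ref{cor:zonal_decomposition} then yields the zonal decomposition $\widetilde f = h_1 - x^c h_2$ on $\widetilde{B(a,r')}$, with $h_1 = \cS^{-1} \dB(x \widetilde f)$ and $h_2 = \cS^{-1} \dB \widetilde f$; specialising to $\hh$ gives $\cS = -1$, so $h_2 = -\dcf \widetilde f$ coincides with $-\dcf f$ on $B(a,r')$. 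By the Almansi-type theorem of \cite{AlmansiH} invoked in the remark following Corollary~\ref{cor:zonal_decomposition}, both $h_1$ and $h_2$ are harmonic functions of the four real variables on the open set $\widetilde{B(a,r')}$. Since the Euclidean sphere $\{a + r\zeta : \zeta \in \SS^3\}$ and each point $a + rx$ with $x \in \BB$ lie inside this set, applying the classical Poisson integral formula in $\R^4$ to each $h_k$ on $B(a,r)$ gives
\[
h_k(a + rx) = \int_{\SS^3} h_k(a + r\zeta)\, P(x,\zeta)\, d\sigma(\zeta), \qquad x \in \BB,\ k = 1, 2.
\]

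Plugging these two identities into $f(a + rx) = h_1(a + rx) - (a + rx)^c h_2(a + rx)$ and then eliminating $h_1(a + r\zeta)$ under the first integral via $h_1(a + r\zeta) = f(a + r\zeta) + (a + r\zeta)^c h_2(a + r\zeta)$ produces the stated formula, since $(a + r\zeta)^c - (a + rx)^c = r(\overline\zeta - \overline x)$ and $h_2 = -\dcf f$. I do not foresee a genuine obstacle: the argument is structurally identical to that of Corollary~\ref{cor:mvf_strongly}, with the Poisson kernel replacing the uniform density; the only delicate point is to confirm that the harmonic decomposition and the Poisson representation remain valid up to $\overline{B(a,r)}$, and this follows from the real analyticity of $\widetilde f$ on $\widetilde{B(a,r')}$.
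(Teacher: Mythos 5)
Your proof is correct and follows essentially the same route as the paper: pass to a slightly larger ball where local and strong slice-regularity coincide, extend slice-regularly to the symmetric completion via Theorem~\ref{teo:extension}, and invoke the Poisson representation for the extension. The only difference is that the paper simply cites \cite[Prop.~3]{AlmansiH} at this point, whereas you re-derive that proposition from the harmonic zonal decomposition $\widetilde f=h_1-x^ch_2$ and the classical Poisson formula in $\R^4$; your algebra (including the identification $(a+r\zeta)^c-(a+rx)^c=r(\overline\zeta-\overline x)$ and $h_2=-\dcf f$ since $\cS=-1$ for $m=3$) checks out.
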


\begin{proof}
We can argue as in the preceding proof. The formula follows from \cite[Prop.\ 3]{AlmansiH} applied to the extension $\widetilde f\in\sr(\widetilde{B(a,r')})$.
\end{proof}

When the centre $a$ is a real point, the ball $B(a,r)$ is an axially symmetric set. In this case we can obtain two formulas in which 
$\dcf f$ does not appear, as in \cite[Cor.\ 2]{AlmansiH}.

\section{Polyharmonicity properties of slice-regular functions}\label{sec:Laplacian}

\subsection{The action of the Laplacian on slice-regular functions}

Let $\OO_D$ be an axially symmetric open subset of $\Q_A$ and $M$ an hypercomplex subspace of $A$.
Let $\OO=\OO_D\cap M$ be an axially symmetric open subset of $M$. 
Let $f=\I(F)$ be a slice-regular function on $\OO_D$, with $F=F_1+\ui F_2$ a holomorphic stem function with components $F_1$, $F_2$. The functions $F_1$ and $F_2$ have harmonic real components with respect to the two-dimensional Laplacian $\Delta_2$ of the plane. 
Since $F(\overline z)=\overline{F(z)}$ for every $z=\alpha+i\beta$, the functions $F_1,F_2:D\subseteq\C\to A$ are, respectively, even and odd functions with respect to the variable $\beta$. Therefore there exist real analytic functions $G_1,G_2:\widetilde{D}\to A$ such that for every $\alpha+i\beta\in D\setminus\R$, it holds
 \begin{equation}\label{eq:G1G2}
F_1(\alpha, \beta)= G_1(\alpha,\beta^2),\quad F_2(\alpha, \beta)=\beta G_2(\alpha,\beta^2),
\end{equation}
where $\widetilde{D}=\{z=\alpha+i\beta^2\in\C\,|\,\alpha+i\beta\in D,\,\beta>0\}$.
If $x=\alpha+\beta J\in\OO_D\setminus\rr$, $z=\alpha+i\beta\in D\setminus\R$, then 
\begin{align}
f'_s(x)&=G_2(\alpha,\beta^2)=G_2(\RE(x),\|\IM(x)\|^2).\label{g2}
\end{align}
In the following, the symbol  $\partial_uG_j(u,v)$ stands for the partial derivative $\ddd{G_j}u(u,v)$ and $\partial_vG_j(u,v)$ for the partial derivative $\ddd{G_j}v(u,v)$ for $j=1,2$.
The functions $G_1$ and $G_2$ are useful in the computation of the Laplacian of the spherical derivative 
of a slice regular function. Here we generalize results proved in \cite{Harmonicity} in the setting of Clifford algebras.

\begin{proposition}\label{pro:laplacian}
Let $\OO=\OO_D\cap M$ be an axially symmetric open subset of $M$.
Let $f=\I(F):\OO\to A$ be slice-regular. Let $f'_s(x)=G_2(\RE(x),\|\IM(x)\|^2)$ as in \eqref{g2} and $m=\dim(M)-1$. Then it holds:
\begin{itemize}\setlength\itemsep{0.1em}
\item[(a)]
For every $x\in\OO\setminus\rr$, 
\[\Delta_{\B}\sd f(x)=2(m-3)\,\partial_v G_2(\RE(x),\|\IM(x)\|^2).\]
\item[(b)]
For each $k=1,2,\ldots, \left[\frac{m-1}2\right]$ and every $x\in\OO\setminus\rr$, 
\[\Delta_{\B}^k \sd f(x)=2^k(m-3)(m-5)\cdots(m-2k-1)\,\partial_v^k G_2(\RE(x),\|\IM(x)\|^2).\]
\end{itemize}
In particular, the functions $\Delta_{\B}^k \sd f(x)$ do not depend on the basis $\B$ of $M$.
\end{proposition}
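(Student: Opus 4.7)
The plan is to reduce everything to a PDE satisfied by $G_2$ as a function of the two variables $(u,v)$, where $u=\RE(x)=x_0$ and $v=\|\IM(x)\|^2=\sum_{i=1}^m x_i^2$, since $\sd f(x)=G_2(u,v)$ by \eqref{g2}. Because the basis $\B$ is orthonormal, $\Delta_\B$ is just the standard Laplacian $\sum_{i=0}^m \partial_i^2$ on $M\cong\R^{m+1}$, so basis-independence is automatic as soon as the formula is established.

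First I would compute, by the chain rule, the action of $\Delta_\B$ on any composite function of the form $\phi(u,v)$. Using $\partial_0 u=1$, $\partial_i v=2x_i$ and $\partial_i^2 v=2$ for $i=1,\dots,m$, and summing over $i=1,\dots,m$, one gets the pullback formula
\[
\Delta_\B\bigl(\phi(u,v)\bigr)=\partial_u^2\phi+4v\,\partial_v^2\phi+2m\,\partial_v\phi.
\]
Next I would extract the PDE satisfied by $G_2$ itself. Since $F$ is holomorphic, $F_2(\alpha,\beta)=\beta\,G_2(\alpha,\beta^2)$ is harmonic in $(\alpha,\beta)$ with respect to the planar Laplacian $\Delta_2$. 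Substituting this form of $F_2$ into $\Delta_2 F_2=0$ and dividing by $\beta$ (using $\beta^2=v$) yields
\[
\partial_u^2 G_2+4v\,\partial_v^2 G_2+6\,\partial_v G_2=0.
\]
Combining with the pullback formula for $\phi=G_2$, the $4v\,\partial_v^2 G_2$ terms cancel and I obtain
\[
\Delta_\B \sd f=\Delta_\B G_2(u,v)=(2m-6)\,\partial_v G_2=2(m-3)\,\partial_v G_2(u,v),
\]
which is part (a).

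For part (b) I would proceed by induction on $k$. Differentiating the harmonicity PDE for $G_2$ in $v$ exactly $k$ times gives
\[
\partial_u^2\partial_v^k G_2+4v\,\partial_v^{k+2}G_2+(6+4k)\,\partial_v^{k+1}G_2=0.
\]
Applying the same pullback formula to $\phi=\partial_v^k G_2$, the $4v$-terms again cancel and one obtains
\[
\Delta_\B\bigl(\partial_v^k G_2\bigr)=\bigl(2m-6-4k\bigr)\partial_v^{k+1}G_2=2(m-2k-3)\,\partial_v^{k+1}G_2.
\]
Assuming inductively that $\Delta_\B^k \sd f=2^k(m-3)(m-5)\cdots(m-2k-1)\,\partial_v^k G_2$, one more application of $\Delta_\B$ produces the factor $2(m-2k-3)=2(m-2(k+1)-1)$, which is precisely the next term in the product. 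The range $k\le[(m-1)/2]$ is compatible with keeping all factors nonzero in the prescribed sense (and is the natural range in which the identity is meaningful). The main obstacle is the second step, verifying the harmonicity PDE for $G_2$; once it is in hand, everything else is chain-rule bookkeeping and a one-line induction.
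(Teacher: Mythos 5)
Your proof is correct and follows essentially the same route as the paper: the pullback formula for $\Delta_\B$ in the variables $(u,v)=(\RE(x),\|\IM(x)\|^2)$, the PDE $\partial_u^2 G_2+4v\,\partial_v^2G_2+6\,\partial_vG_2=0$ extracted from $\Delta_2F_2=0$, and the induction on $k$ via the commutator of $\partial_v^k$ with $4v\,\partial_v^2$. The only (immaterial) difference is that you differentiate the PDE $k$ times before applying the pullback, whereas the paper commutes $\partial_v^k$ inside the pullback identity; the computations coincide.
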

\begin{proof}
We can follow the lines of the proof given in \cite[Theorem 4.1]{Harmonicity}.
Let $x_0=\RE(x)$, $r=\|\IM(x)\|$. By direct computation, from \eqref{eq:G1G2} and \eqref{g2} we get
\begin{equation}\label{eq:Delta2}
\Delta_2F_2(\alpha,\beta)=\beta\left(\dds u^2+4\beta^2\,\dds v^2 +6\dds v \right)G_2(\alpha,\beta^2)
\end{equation}
and
\begin{align}\label{eq:DeltaG2}
\Delta_{\B}G_2(x_0,r^2)&=
\partial_0^2G_2(x_0,r^2)+\sum_{i=1}^m\partial_i^2G_2(x_0,r^2)=\left(\dds u^2 +4r^2\dds v^2 +2m\,\dds v\right) G_2(x_0,r^2).
\end{align}
Since $\Delta_2F_2=0$ on $D$, we get $\Delta_{\B}\sd f(x)=\Delta_{\B}G_2(x_0,r^2)=(2m-6)\,\dds v G_2(x_0,r^2)$. This proves (a). To obtain (b) we use induction on $k$, starting from the case $k=1$ given by (a). For every $k$ with $1<k\le\left[\frac{m-1}2\right]-1$, using equation \eqref{eq:DeltaG2} with the function $\dds v^k G_2(x_0,r^2)$ in place of $G_2(x_0,r^2)$ and again $\Delta_2F_2=0$ we get
\begin{align*}
\Delta_{\B}\dds v^k G_2(x_0,r^2)&
=\left(\dds u^2\dds v^k +4r^2\dds v^{k+2} +2m\,\dds v^{k+1}\right) G_2(x_0,r^2)\\
&=\left(\dds v^k\left(\dds u^2 +4r^2\dds v^2 +2m\,\dds v\right)-4k\,\dds v^{k+1}\right) G_2(x_0,r^2)\\
&=\left(\dds v^k\left(-6\,\dds v +2m\,\dds v\right)-4k\dds v^{k+1}\right) G_2(x_0,r^2)\\
&=2(m-2k-3)\,\dds v^{k+1}  G_2(x_0,r^2).
\end{align*}
By the induction hypothesis 
\begin{align*}
\Delta_{\B}^{k+1}\sd f(x)&=2^k(m-3)(m-5)\cdots(m-2k-1)\,\Delta_{\B}\partial_v^k G_2(x_0,r^2)\\&=
2^k(m-3)(m-5)\cdots(m-2k-1)\, 2(m-2k-3)\,\dds v^{k+1} G_2(x_0,r^2)
\end{align*}
and (b) is proved.
\end{proof}

Using the previous computations, we are now able to prove some harmonicity properties of slice-regular functions. In particular, we  give a version of the Fueter-Sce-Qian Theorem (see e.g.\ \cite[\S1.1.3]{AltavillaDeBieWutzig} and references therein) for the hypercomplex subspace $M$ of $A$.  In the case of Clifford algebras $\rr_n$ with $n$ odd and $M=\rr^{n+1}$, these results were proven in \cite[Corollaries 4.2 and 4.4]{Harmonicity}. The case of Clifford algebras $\rr_n$ with $n$ even was proved in \cite[Lemma 4.4]{AltavillaDeBieWutzig} by means of the definition of the fractional Laplacian via Fourier transform, the approach already used by Qian in \cite{Qian1997}. Here we adopt a different definition of the fractional Laplacian $(-\Delta)^{1/2}$, the one based on the \emph{harmonic extension problem} introduced by Caffarelli and Silvestre \cite{CaffarelliSilvestre} on $\rr^N$  and generalized to open subsets $U$ of $\rr^N$ in \cite{StingaTorrea}. We briefly recall this approach. Under suitable regularity conditions for $g:U\to\R$, if the function $u(x,y):U\times\rr^+\to\R$ solves the problem
\[
\begin{cases}
\Delta_x u(x,y)+\dd{^2u}{y^2}(x,y)=0\text{\quad on }U\times\rr^+,\\
u(x,0)=g(x)\text{\quad on }U,\\
\dd{u}{y}(x,0)=h(x)\text{\quad on }U,
\end{cases}
\]
then it holds $(-\Delta)^{1/2}g=h$ on $U$.

\begin{theorem}\label{thm:n_odd_even}
Let $d=\dim(A)$ and let $f:\OO\subseteq M\to A$ be slice-regular.  

If $m=\dim(M)-1\ge3$ is odd, then it holds:
\begin{itemize}\setlength\itemsep{0.4em}
\item[(a)]
  If $m=3$, $\Delta_\B\sd f=0$, i.e.,  the $d$ real components of the spherical derivative $\sd f$ are $\Delta_\B$-harmonic on $\OO\setminus\rr$.
\item[(b)]
If $m>3$, $(\Delta_{\B})^{\frac{m-1}2}\sd f=0$, i.e., $\sd f$ is polyharmonic of order $\frac{m-1}2$ on $\OO\setminus\rr$.
\end{itemize}
If $A$ is not associative, assume also that $f$ is $M$-admissible. It holds:
\begin{itemize}\setlength\itemsep{0.4em}
\item[(c)]
$\dB(\Delta_{\B})^{\frac{m-1}2}f=(\Delta_{\B})^{\frac{m-1}2} \dB f=0$
on $\OO$.
\item[(d)]
 $(\Delta_{\B})^{\frac{m+1}2} f=0$ on $\OO$, i.e., $f$ 
 is polyharmonic of order $\frac{m+1}2$ on $\OO$.
\end{itemize}

If $m=\dim(M)-1\ge2$ is even, then it holds:
\begin{itemize}\setlength\itemsep{0.4em}
\item[(a')]
If $m=2$, $(-\Delta_\B)^{1/2}\sd f=0$, i.e.,  the $d$ real components of the spherical derivative $\sd f$ are in the kernel of the fractional Laplacian $(-\Delta_\B)^{1/2}$ on $\OO\setminus\rr$.
\item[(b')]
If $m>2$, $(-\Delta_\B)^{1/2}((\Delta_{\B})^{\frac{m-2}2}\sd f)=0$ on $\OO\setminus\rr$.
\end{itemize}
If $A$ is not associative, assume also that $f$ is $M$-admissible. It holds:
\begin{itemize}\setlength\itemsep{0.4em}
\item[(c')]
$(-\Delta_\B)^{1/2}(\Delta_{\B})^{\frac{m-2}2} \dB f=0$
on $\OO$.
\item[(d')]
 $(-\Delta_\B)^{1/2}(\Delta_{\B})^{\frac{m}2} f=0$ on $\OO$.
\end{itemize}
\end{theorem}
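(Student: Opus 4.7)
The plan is to treat the odd and even $m$ cases separately, and in each case deduce the statements on $\dB f$ and $f$ from the corresponding statement on $\sd f$ via Proposition \ref{pro:gamma_slice}(d) and the factorization $\Delta_\B = 4\partial_\B\dB$ from Proposition \ref{pro:powers}(b). For odd $m\ge3$, parts (a) and (b) follow at once from Proposition \ref{pro:laplacian}(b) with $k=(m-1)/2$: the product $2^k(m-3)(m-5)\cdots(m-2k-1)$ contains the factor $m-2k-1=0$. For (c), I would use $\dB f = c_m\sd f$ (Proposition \ref{pro:gamma_slice}(d), extended from $\OO\setminus\R$ to $\OO$ by real analyticity, cf.\ Definition \ref{def:sd}); applying $(\Delta_\B)^{(m-1)/2}$ and using that $\dB$ commutes with $\Delta_\B$ on $\mathcal{C}^\infty$ functions (each $\partial_j$ commutes with left multiplication by every constant $v_i$, independently of associativity) gives both orderings in (c). Part (d) is then
\[
(\Delta_\B)^{(m+1)/2} f = 4\partial_\B\,\dB\,(\Delta_\B)^{(m-1)/2} f = 4\partial_\B(\Delta_\B)^{(m-1)/2}\dB f = 0
\]
by (c).

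For even $m$, the coefficient in Proposition \ref{pro:laplacian}(b) never vanishes at integer $k$, so the strategy is to construct an explicit Caffarelli--Silvestre harmonic extension witnessing that $(-\Delta_\B)^{1/2}$ annihilates $(\Delta_\B)^{(m-2)/2}\sd f$. With $k=(m-2)/2$, I set
\[
u(x,y) := \partial_v^k G_2\bigl(\RE(x),\,\|\IM(x)\|^2 + y^2\bigr)
\]
on $(\OO\setminus\R)\times\R^+$. A direct coordinate computation yields
\[
(\Delta_\B + \partial_y^2)u = \partial_u^2\partial_v^k G_2 + 2(m+1)\partial_v^{k+1}G_2 + 4(r^2+y^2)\partial_v^{k+2}G_2,
\]
where $r=\|\IM(x)\|$. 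The key algebraic ingredient is the identity
\[
\partial_u^2\partial_v^k G_2 = -(4k+6)\partial_v^{k+1}G_2 - 4v\,\partial_v^{k+2}G_2,
\]
obtained by induction on $k$ from the base case $\partial_u^2 G_2 = -6\partial_v G_2 - 4v\partial_v^2 G_2$, which is $\Delta_2F_2=0$ rewritten via \eqref{eq:Delta2}. Substituting collapses the three-term sum to $2(m-2k-2)\partial_v^{k+1}G_2$, which vanishes precisely at $k=(m-2)/2$. Since moreover $\partial_y u(x,0) = 2y\,\partial_v^{k+1}G_2\bigl|_{y=0} = 0$, the Stinga--Torrea characterization of $(-\Delta_\B)^{1/2}$ on open sets delivers $(-\Delta_\B)^{1/2}u(\cdot,0) = 0$, and $u(\cdot,0)$ is a nonzero scalar multiple of $(\Delta_\B)^{(m-2)/2}\sd f$ by Proposition \ref{pro:laplacian}(b). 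This proves (a') (for $m=2$, $k=0$) and (b'). Parts (c') and (d') then follow exactly as in the odd case, replacing $\sd f$ by $\dB f = c_m\sd f$ and using that $(-\Delta_\B)^{1/2}$ commutes with the constant-coefficient operators $\partial_\B$, $\dB$, $\Delta_\B$ on real-analytic functions.

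The main technical hurdle is the inductive identity for $\partial_u^2\partial_v^k G_2$ in the even case; this is a short induction once \eqref{eq:Delta2} supplies the base case. A secondary point is verifying that the Stinga--Torrea formulation applies on the open set $\OO\setminus\R$ rather than on all of $\R^{m+1}$, which is automatic because the extension is constructed locally and $\sd f$, $u$ are real analytic, so vanishing Neumann data forces $(-\Delta_\B)^{1/2}$ to vanish pointwise. Finally, the $M$-admissibility hypothesis in (c), (d), (c'), (d') is used only to ensure $\dB f\in M$ in the non-associative case, so that the iterated operators $\Delta_\B$, $\partial_\B$, and $(-\Delta_\B)^{1/2}$ remain well-defined on all intermediate expressions.
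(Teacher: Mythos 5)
Your proposal is correct and follows essentially the same route as the paper: parts (a)--(d) via the vanishing factor $m-2k-1$ in Proposition \ref{pro:laplacian}(b), the identity $\dB f=c_m\sd f$ and the factorization $\Delta_\B=4\partial_\B\dB$; and parts (a')--(d') via the same Caffarelli--Silvestre/Stinga--Torrea harmonic extension $\partial_v^kG_2(\RE(x),\|\IM(x)\|^2+y^2)$ with vanishing Neumann data (the paper merely realizes the extension variable as an extra basis vector $v_{m+1}$ of $A$, and carries out your inductive identity for $\partial_u^2\partial_v^kG_2$ by direct substitution).
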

\begin{proof}
Assume $m$ odd.
Points (a) and (b) are immediate from Proposition \ref{pro:laplacian}. Point (c) is a consequence of Proposition \ref{pro:laplacian} and Proposition \ref{pro:gamma_slice}(b). Statement (d) comes from the factorization $\Delta_\B=4\partial_\B\dB$ and point (c). Since $\cS\sd f=\dB f$ on $\OO\setminus\rr$, the spherical derivative $\sd f$ extends real analytically to $\OO$. Therefore (c) and (d) hold on the whole $\OO$.


Now assume $m\ge2$ even. Since $\s_A\ne\emptyset$ and every $J\in\s_A$ defines a complex structure on $A$ by left multiplication by $J$, the dimension $d$ of $A$ is even. Therefore $M\ne A$. Let $\B'=\B\cup\{v_{m+1}\}=\{1,v_1,\ldots,v_m,v_{m+1}\}$ be an orthonormal set w.r.t.\ the norm $\|\ \|$ in $A$ and let $M':=\Span(\B')\simeq\rr^{m+2}$. 
Notice that $M'$ is not necessarily contained in the quadratic cone $\Q_A$.
Let $\Delta_{\B'}$ be the Laplacian operator on $M'$ associated with $\B'$. If we denote by $x_0,x_1,\ldots,x_m,x_{m+1}$ the coordinates of $M'$ w.r.t.\ $\B'$, given an open subset $\OO'$ of $M'$, $\Delta_{\B'}$ acts on functions of class $C^2(\OO',A)$ as
\[
\Delta_{\B'}h:=\Delta_\B h+\partial^2_{m+1}h
\]
where the operator $\partial^2_{m+1}:C^2(\OO',A)\to C^0(\OO',A)$ is defined by 
$\partial^2_{m+1} h=L\circ\dd{^2(L^{-1}\circ h\circ L)}{x_{m+1}^2}\circ L^{-1}$, as in Definition \ref{def:db}.

Let $f'_s(x)=G_2(\alpha,\beta^2)=G_2(\RE(x),\|\IM(x)\|^2)$ for $x\in\OO\setminus\rr$ as in \eqref{g2}. 
From Proposition \ref{pro:laplacian}(b), it follows that 
\[g(x):=\Delta_{\B}^{(m-2)/2} \sd f(x)=
a_m\,\partial_v^{{(m-2)/2}} G_2(x_0,r^2),
\]
where $r^2=\sum_{i=1}^mx_i^2$ and $a_m:=2^{(m-2)/2}(m-3)!!$ for $m\ge4$, $a_2:=1$. Let $\tilde r:=\sqrt{r^2+x_{m+1}^2}$.
The function $g$ can be extended smoothly for $x=\sum_{i=0}^{m+1}x_iv_i\in\OO'=\OO\times\Span(v_{m+1})\subset M'$ as
\[\tilde g(x):=a_m\,\partial_v^{{(m-2)/2}} G_2(x_0,\tilde r^2)=a_m\,\partial_v^{{(m-2)/2}} G_2(x_0, r^2+x_{m+1}^2).
\]
Then it holds
\[
\partial_{m+1}\tilde g(x)=2a_mx_{m+1}\partial_v^{m/2} G_2(x_0,\tilde r^2)
\]
and
\[
\partial^2_{m+1}\tilde g(x)=2a_m\left(\partial_v^{m/2} G_2(x_0,\tilde r^2)+
2x_{m+1}^2\partial_v^{(m+2)/2} G_2(x_0,\tilde r^2)\right).
\]
As seen in equation \eqref{eq:Delta2}, $\Delta_2F_2(\alpha,\beta)=0$ on $D$ implies that
\[
\left(\dds u^2+4\beta^2\,\dds v^2 +6\dds v \right)G_2(\alpha,\beta^2)=0\text{\quad for $\beta\ne0$}.
\]
Therefore when $m=2$,
\begin{align*}
\Delta_{\B'}\tilde g&=\left(\dds u^2+4 r^2\,\dds v^2 +4\dds v \right)G_2(x_0,\tilde r^2)+2\left(\partial_v G_2(x_0,\tilde r^2)+
2x_{3}^2\partial_v^2 G_2(x_0,\tilde r^2)\right)\\
&=
\left(\dds u^2+4\tilde r^2\,\dds v^2 +6\dds v \right)G_2(x_0,\tilde r^2)=0\text{\quad for $\tilde r\ne0$},
\end{align*}
and when $m\ge4$,
\begin{align*}
\Delta_{\B'}\tilde g&
=a_m\left(\dds u^2\dds v^{{(m-2)/2}} +4r^2\dds v^{(m+2)/2} +2m\,\dds v^{m/2}\right) G_2(x_0,\tilde r^2)\\
&\quad +2a_m\left(\partial_v^{m/2} G_2(x_0,\tilde r^2)+
2x_{m+1}^2\partial_v^{(m+2)/2} G_2(x_0,\tilde r^2)\right)\\
&=a_m\left(\dds v^{(m-2)/2}\left(\dds u^2 +4\tilde r^2\dds v^2-4x_{m+1}^2\dds v^2 +2m\,\dds v\right)-2(m-2)\,\dds v^{m/2}\right) G_2(x_0,\tilde r^2)\\
&\quad +2a_m\left(\partial_v^{m/2} G_2(x_0,\tilde r^2)+
2x_{m+1}^2\partial_v^{(m+2)/2} G_2(x_0,\tilde r^2)\right)\\
&=a_m\left(\dds v^{(m-2)/2}\left(-6\partial_v-4x_{m+1}^2\dds v^2 +2m\,\dds v\right)-2(m-2)\,\dds v^{m/2}\right) G_2(x_0,\tilde r^2)\\
&\quad +2a_m\left(\partial_v^{m/2} G_2(x_0,\tilde r^2)+
2x_{m+1}^2\partial_v^{(m+2)/2} G_2(x_0,\tilde r^2)\right)=0\text{\quad for $\tilde r\ne0$}.
\end{align*}
Therefore, for every $m$ it holds:
\[
\begin{cases}
\Delta_{\B'}\tilde g=0\text{\quad on }\OO'\setminus\rr,\\
\partial_{m+1}\tilde g=0\text{\quad when }x_{m+1}=0.
\end{cases}
\]
This means that $(-\Delta)^{1/2}g=0$ on $\OO\setminus\rr$ and proves  points (a') and (b'). The equality of statement (c') on $\OO\setminus\rr$  is a consequence of (a'), (b') and Proposition \ref{pro:gamma_slice}(b). 
Since the operators $\dB$, $\Delta_\B$ and $(-\Delta_\B)^{1/2}$ commute with each other, statement (d') on $\OO\setminus\rr$ follows from the factorization $\Delta_\B=4\partial_\B\dB$ and point (c'). By continuity, (c') and (d') hold on the whole $\OO$.
\end{proof}

Given any $n\in\qq$ such that $2n\in\nn$, we will say that an $A$-valued function $f$ is \emph{$n$-polyharmonic} if it belongs to the kernel of the operator $(\Delta_\B)^{n}$ when $n\in\nn$, and to the kernel of the operator $(-\Delta_\B)^{1/2}(\Delta_\B)^{\frac{2n-1}2}$ when $2n$ is odd. Theorem \ref{thm:n_odd_even} shows that for any slice-regular function $f$ on $M$, the spherical derivative $\sd f$ is $\frac{m-1}2$-polyharmonic, while $f$ is $\frac{m+1}2$-polyharmonic, where $m=\dim(M)-1$. In view of Theorem \ref{thm:localext}, the same holds for any locally slice-regular function on $M$. 


\begin{corollary}
Let $f$ be a locally slice-regular function on an open set $\OO\subseteq M$. If $A$ is not associative, assume also that $f$ is $M$-admissible. Then $f$ is $\frac{m+1}2$-polyharmonic, where $m=\dim(M)-1$. 
\end{corollary}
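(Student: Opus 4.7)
The plan is to reduce the statement to Theorem \ref{thm:n_odd_even}(d) and (d') using the Local Extendibility Theorem \ref{thm:localext}, exploiting the fact that polyharmonicity is a condition that can be checked locally at each point. Since $(\Delta_\B)^{(m+1)/2}$ (for $m$ odd) is a differential operator with constant coefficients, and since the realization of $(-\Delta_\B)^{1/2}$ used in the excerpt is built via the Caffarelli--Silvestre harmonic extension in an auxiliary direction (depending only on the values of the function on an open set), both operators are local in the sense required for the argument.

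First, I would fix an arbitrary point $y\in\OO$. By Theorem \ref{thm:localext}, there exist an open neighbourhood $\OO_y\subseteq\OO$ of $y$ and a slice-regular function $g_y\in\sr(\widetilde{\OO_y})$ with $g_y=f$ on $\OO_y$. If $A$ is not associative, the $M$-admissibility of $f$ on $\OO$ together with $g_y=f$ on $\OO_y$ and the real-analytic propagation used in the proof of Theorem \ref{teo:extension} (which determines $g_y$ from the components of the zonal decomposition built out of $\dB f$ and $\dB(xf)$, both $M$-valued by admissibility) shows that $g_y$ is $M$-admissible on a neighborhood of $\OO_y$, so Theorem \ref{thm:n_odd_even} is applicable to $g_y$.

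Next, I would apply Theorem \ref{thm:n_odd_even} to $g_y$ on the axially symmetric open set $\widetilde{\OO_y}$. If $m$ is odd, point (d) yields $(\Delta_\B)^{(m+1)/2}g_y\equiv 0$ on $\widetilde{\OO_y}$, hence on $\OO_y$; since the operator is local and $f=g_y$ on $\OO_y$, we obtain $(\Delta_\B)^{(m+1)/2}f=0$ on $\OO_y$. If $m$ is even, point (d') yields $(-\Delta_\B)^{1/2}(\Delta_\B)^{m/2}g_y=0$ on $\widetilde{\OO_y}$; restricting to $\OO_y$ (the auxiliary harmonic extension constructed in the proof of Theorem \ref{thm:n_odd_even} still lives over $\OO_y\times\rr^+$ and satisfies the required boundary conditions there), we conclude $(-\Delta_\B)^{1/2}(\Delta_\B)^{m/2}f=0$ on $\OO_y$. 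In either case, by the definition of $n$-polyharmonicity given right after Theorem \ref{thm:n_odd_even}, $f$ is $\frac{m+1}{2}$-polyharmonic on $\OO_y$. Varying $y$, the family $\{\OO_y\}_{y\in\OO}$ covers $\OO$ and the conclusion holds pointwise on $\OO$.

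The only subtle step is the one concerning the fractional power $(-\Delta_\B)^{1/2}$ in the even-dimensional case, because a priori the Caffarelli--Silvestre/Stinga--Torrea definition could be viewed as non-local; however, the proof of Theorem \ref{thm:n_odd_even}(a')--(d') is entirely constructive, producing an explicit harmonic extension $\tilde g$ defined on $\OO'\times\Span(v_{m+1})$ that works on every open subset. Thus restricting from $\widetilde{\OO_y}$ to $\OO_y$ is unproblematic, and the whole argument reduces to the straightforward sheaf-theoretic principle that a local differential condition transferred through the local extensions $g_y$ extends to all of $\OO$.
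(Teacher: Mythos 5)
Your argument is correct and follows essentially the same route as the paper, which obtains the corollary by combining the Local Extendibility Theorem \ref{thm:localext} with points (d) and (d') of Theorem \ref{thm:n_odd_even} and the locality of the operators involved. The extra care you take with the $M$-admissibility of the local extensions $g_y$ and with the locality of the Caffarelli--Silvestre realization of $(-\Delta_\B)^{1/2}$ is a reasonable elaboration of the same idea, not a different proof.
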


Table 1 summarizes the particular cases when $M$ is one of the three real division algebras $\cc,\hh$ or $\oo$. These are the unique cases where $M$ can taken to be the whole algebra $A$ (\cite[Prop.\ 1(7)]{AIM2011}). In the table the symbol $\Delta_{2k}$ denotes the Euclidean Laplacian of $\R^{2k}$ for $k=1,2,4$. We recall that quaternionic and octonionic slice-regular functions are sense-preserving, i.e., their Jacobian determinant is always non-negative (see \cite{JGEA2021} and \cite{OCSoctonions}), as it holds for every complex holomorphic map.


\begin{table}[h]
\begin{center}
$f$ locally slice-regular$\quad\Rightarrow\quad$
{
\begin{tabular}{c|c|c}
\multicolumn{3}{c}{Real division algebras} \\
\hline
$\cc$ &$\hh$&$\oo$ \\
\hline
$\Delta_2^1f=0$ & $\Delta_4^2f=0$ & $\Delta_8^4f=0$\\[2pt]
\hline
harmonic&biharmonic&4-harmonic\\
\multicolumn{3}{c}{(sense-preserving)}
\end{tabular}
\vskip 8pt
}
\caption{$M=\cc,\hh$ or $\oo$.}
\end{center}
\end{table}

As an application of Theorem \ref{thm:n_odd_even}, we can refine Corollary \ref{cor:zonal_decomposition} and generalize to every hypercomplex subspace the Almansi type decompositions obtained in \cite{AlmansiH,Almansi2019} in the setting of quaternions and Clifford algebras.

\begin{corollary}[Polyharmonic Zonal Decomposition]\label{cor:PZDec}
Let $f\in\sr(\OO)$, with $\OO$ an axially symmetric open subset of $M$.
If $A$ is not associative, assume that $f$ and $xf$ are $M$-admissible. Let $m=\dim(M)-1\ge2$. Then there exist two uniquely determined $A$-valued zonal and $\frac{m-1}2$-polyharmonic functions $h_1$, $h_2$ on $\OO$ such that
\[
f(x)=h_1(x)-x^c h_2(x)
\]
for every $x\in\OO$. The same result holds locally for every locally slice-regular function. The function $f$ is slice-preserving if and only if $h_1$ and $h_2$ are real-valued.
 \qed
\end{corollary}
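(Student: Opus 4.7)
The plan is to reduce Corollary \ref{cor:PZDec} to ingredients already established. The decomposition $f=h_1-x^ch_2$ with zonal $A$-valued functions $h_1=\cS^{-1}\dB(xf)$ and $h_2=\cS^{-1}\dB f$, as well as the slice-preserving characterization, are exactly the content of Corollary \ref{cor:zonal_decomposition}. What remains to prove is therefore (i) the polyharmonicity of $h_1,h_2$, (ii) their uniqueness under the zonality constraint, and (iii) the local-version statement.

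For polyharmonicity, Proposition \ref{pro:gamma_slice}(b) identifies $h_2=\cS^{-1}\dB f$ with the spherical derivative $\sd f$. The identity map $x\mapsto x$ is slice-preserving (its stem function $z\mapsto z$ has real components), so the pointwise product $xf$ coincides with the slice product $x\cdot f$ and is slice-regular; the same identification then yields $h_1=\sd{(xf)}$. By hypothesis $f$ and $xf$ are both $M$-admissible, so Theorem \ref{thm:n_odd_even} applies to each and shows that $\sd f$ and $\sd{(xf)}$ are $\tfrac{m-1}{2}$-polyharmonic, that is, in the kernel of $(\Delta_\B)^{(m-1)/2}$ when $m$ is odd and of $(-\Delta_\B)^{1/2}(\Delta_\B)^{(m-2)/2}$ when $m$ is even.

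For uniqueness, suppose $f=h'_1-x^ch'_2$ is a second such decomposition with $h'_1,h'_2$ zonal, and set $g_i:=h_i-h'_i$, so that $g_1(x)=x^cg_2(x)$ on $\OO$. For $x\in\OO\setminus\R$ write $x=\alpha+I\beta$ with $I\in S$ and $\beta>0$; then $x^c=\alpha-I\beta$ lies in $\s_x\cap M$, and zonality gives $g_i(x^c)=g_i(x)$. Evaluating the identity $g_1=x^c g_2$ at the point $x^c$ produces $g_1(x)=x\,g_2(x)$; subtracting the two expressions for $g_1(x)$ yields $2\IM(x)g_2(x)=0$. Since $\IM(x)=I\beta$ with $I\in\s_A$ a unit and $\beta>0$ real, the alternative identity $I(Ic)=-c$ forces $g_2(x)=0$. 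Continuity extends this to all of $\OO$, and $g_1\equiv 0$ follows.

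For the local version, let $f\in\sr_{loc}(\OO)$. Theorem \ref{thm:localext} provides at each $y\in\OO$ a neighbourhood $\OO_y\subseteq\OO$ and a slice-regular extension $g_y\in\sr(\widetilde{\OO_y})$ of $f|_{\OO_y}$, to which the axially symmetric case applies and produces zonal, $\tfrac{m-1}{2}$-polyharmonic functions $h_{1,y},h_{2,y}$ on $\widetilde{\OO_y}$. Because the intrinsic expressions $\cS^{-1}\dB(xf)$ and $\cS^{-1}\dB f$ depend only on $f$ and coincide with $h_{1,y},h_{2,y}$ on $\OO_y$, the local pieces agree on overlaps and define global functions $h_1,h_2$ on $\OO$; polyharmonicity, being a local condition, then holds on $\OO$. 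I expect the main subtlety to lie in the uniqueness step in the non-associative setting, and that is precisely where the invertibility in $A$ of every imaginary unit of $\s_A$, together with alternativity, is essential.
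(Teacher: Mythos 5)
Your proof is correct and follows the route the paper intends (the corollary is stated with no written proof precisely because it is the combination of Corollary \ref{cor:zonal_decomposition}, which gives $h_1=\cS^{-1}\dB(xf)=\sd{(xf)}$ and $h_2=\cS^{-1}\dB f=\sd f$ together with the slice-preserving characterization, and Theorem \ref{thm:n_odd_even}(a),(b),(a'),(b') applied to $f$ and to the slice-regular function $xf$). Your explicit uniqueness argument --- evaluating $g_1=x^cg_2$ at $x$ and at $x^c$, using zonality to get $2\IM(x)g_2(x)=0$ and then invertibility of $\IM(x)$ via alternativity --- is a genuine addition, since the paper asserts uniqueness without proof and later relies on it in Proposition \ref{pro:Kernel}; the only cosmetic point left unsaid is that the polyharmonicity identities, proved on $\OO\setminus\R$, extend to $\OO$ by continuity because $h_1,h_2$ are real analytic on $\OO$.
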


\subsection{The $\dibar\Delta$-decomposition and the $\dibar\Delta$-Fueter mapping}\label{sec:DDec}

Let $M$ be an hypercomplex subspace of $A$ of dimension $m+1$.
When $m=3$, any slice-regular function on subsets of $M$ is biharmonic. 
When $m$ is odd and greater than 3, we can combine Corollary \ref{cor:PZDec} with the classical Almansi's Theorem  on polyharmonic functions (see \cite{Almansi,Aronszajn}) to obtain a decomposition of any slice-regular function in terms of biharmonic slice functions, more precisely with components in the kernel of the third order operator $\dB\Delta_\B$. 


\begin{theorem}[$\dibar\Delta$-decomposition]\label{thm:dDeltaDec}
Let $f\in\sr(\OO)$, with $\OO$ an axially symmetric open subset of $M$.  Assume that $\OO=\OO_D\cap M$ is a star-like domain with centre 0.
If $A$ is not associative, assume that $f$ and $xf$ are $M$-admissible. Let $m=\dim(M)-1\ge3$ be odd. 
Then there exist $A$-valued 
slice functions $f_0,\ldots,f_{(m-3)/2}$ on $\OO$ in the kernel of the operator $\dB\Delta_{\B}$ such that
\[
f(x)=f_0(x)+\|x\|^2f_1(x)+\cdots+\|x\|^{m-3}f_{\frac{m-3}2}(x)\text{\quad  $\forall x\in\OO$.}
\]
In particular, the functions $f_0,\ldots,f_{(m-3)/2}$ are biharmonic. The function $f$ is slice-preserving if and only if $f_0,\ldots,f_{(m-3)/2}$ are slice-preserving.
\end{theorem}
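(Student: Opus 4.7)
The plan is to combine the Polyharmonic Zonal Decomposition of Corollary \ref{cor:PZDec} with the classical Almansi theorem for polyharmonic functions on star-like domains, and then verify the $\dB\Delta_\B$-kernel condition for each summand through a direct algebraic identity.

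First, I would apply Corollary \ref{cor:PZDec} to write $f = h_1 - x^c h_2$ on $\OO$, where $h_1, h_2$ are zonal and $\tfrac{m-1}{2}$-polyharmonic. Since $\OO$ is star-like at $0$, the classical Almansi theorem yields, for $i=1,2$, a unique representation
\[
h_i(x) = \sum_{k=0}^{(m-3)/2} \|x\|^{2k}\, g_{i,k}(x),
\]
with each $g_{i,k}$ harmonic on $\OO$. By uniqueness of Almansi, combined with the invariance of $h_i$ under those orthogonal transformations of $M$ that fix $\R$ (which stabilize every sphere $\s_y \cap M$ and preserve $\|x\|^2$ and the star-likeness of $\OO$), each component $g_{i,k}$ inherits the zonal property of $h_i$.

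Next, set $f_k := g_{1,k} - x^c g_{2,k}$ for $k = 0, \ldots, (m-3)/2$. Since $g_{1,k}, g_{2,k}$ are zonal, the identity $f_k = (g_{1,k} - \RE(x) g_{2,k}) + \IM(x)\, g_{2,k}$ exhibits $f_k$ as a slice function on $\OO$ with $\vs{f_k} = g_{1,k}-\RE(x) g_{2,k}$ and $\sd{f_k} = g_{2,k}$. Substituting the Almansi decompositions into $f = h_1 - x^c h_2$ and regrouping powers of $\|x\|^2$ yields the desired identity $f = \sum_{k=0}^{(m-3)/2} \|x\|^{2k} f_k$. To verify the kernel condition, I use the componentwise Leibniz rule (valid without associativity, applied coordinate by coordinate) together with $\Delta_\B(x^c)=0$ to derive
\[
\Delta_\B(x^c u) = 4\,\partial_\B u + x^c \Delta_\B u \qquad \forall\, u \in \mathcal C^2(\OO, A),
\]
where $\partial_\B$ is the conjugated Cauchy-Riemann operator. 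Applied with $u = g_{2,k}$ harmonic, and using $\Delta_\B g_{1,k} = 0$, this gives $\Delta_\B f_k = -4\,\partial_\B g_{2,k}$, whence Proposition \ref{pro:powers}(b) produces $\dB \Delta_\B f_k = -4\,\dB\partial_\B g_{2,k} = -\Delta_\B g_{2,k} = 0$. Biharmonicity then follows from the factorization $\Delta_\B = 4\,\partial_\B \dB$.

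For the slice-preserving statement, Corollary \ref{cor:zonal_decomposition} gives that $f$ is slice-preserving iff $h_1, h_2$ are real-valued; uniqueness of Almansi then forces the $g_{i,k}$ to be real-valued, so each $f_k$ has real-valued zonal decomposition and is itself slice-preserving. The main subtlety is the zonal-inheritance step for the Almansi components: although natural from equivariance, a rigorous argument may require either Aronszajn-type explicit integral formulas for the $g_{i,k}$, or a symmetrization argument (averaging over the orthogonal group of $M$ fixing $\R$) combined with the uniqueness of the Almansi decomposition.
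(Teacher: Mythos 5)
Your proposal is correct and follows essentially the same route as the paper's proof: Corollary \ref{cor:PZDec} plus the classical Almansi theorem on the star-like domain, the definition $f_k=u_k-x^c v_k$, and the identity $\Delta_\B(x^c u)=4\partial_\B u+x^c\Delta_\B u$ combined with Proposition \ref{pro:powers}(b) to get $\dB\Delta_\B f_k=-\Delta_\B v_k=0$. The only divergence is the step you yourself flag as the main subtlety: the paper establishes sliceness of the Almansi components via the Aronszajn integral formula \eqref{eq:aronszajn}, while you derive their zonality from orthogonal equivariance and uniqueness of the Almansi decomposition --- which is exactly the argument the paper deploys for these same components in the proof of Proposition \ref{pro:Kernel}, so both justifications are valid.
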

\begin{proof}
Let $f=h_1-x^ch_2$ be the decomposition given in Corollary~\ref{cor:PZDec}. We recall that $h_1=\sd{(xf)}$ and $h_2=\sd f$.
We identify as usual $M$ with $\R^{m+1}$ by means of the coordinates $x_0,\ldots,x_m$ w.r.t.\ the basis $\B$. Then for every $x\in M$, $\|x\|$ coincides with the Euclidean norm of $\R^{m+1}$.

Let $(e_0,e_1,\ldots,e_{d-1})$ be a real basis of $A$. If $h$ is any slice function on $\OO_D$, the decomposition $h(x)=\sum_{i=0}^{d-1}h^i(x)e_i$ defines $d$ real-valued functions $h^0,\ldots,h^{d-1}$. We show that the $h^i$ are slice functions on $\OO_D$, using the characterization of sliceness given in \cite[Lemma\ 3.2]{Gh_Pe_GlobDiff}. Since $h$ is slice, the functions 
\[
h(x)+h(x^c)=\sum_{i=0}^{d-1}(h^i(x)+h^i(x^c))e_i\quad\text{and}\quad 
\IM(x)(h(x)-h(x^c))=\sum_{i=0}^{d-1}\IM(x)(h^i(x)-h^i(x^c))e_i,
\]
are constant on a fixed sphere $\s_y$. Therefore also the real component functions 
\[
h^i(x)+h^i(x^c)\quad\text{and}\quad \IM(x)(h^i(x)-h^i(x^c)),\quad i=0,\ldots, d-1,
\]
are constant on $\s_y$. This implies that every $h^i$ is a slice function on $\OO_D$. Applying this argument to $h_1$ and $h_2$, we get that the $d$ real components of $h_1$ and $h_2$ are slice functions on $\OO_D$. Now we apply Almansi's Theorem to these real components, which are $\frac{m-1}2$-polyharmonic on $\OO$. We get $A$-valued harmonic functions $u_0,\ldots,u_{(m-3)/2}$ and $v_0,\ldots,v_{(m-3)/2}$ such that 
\[
h_1(x)=\sum_{k=0}^{(m-3)/2}\|x\|^{2k}u_k(x)\quad\text{and}\quad h_2(x)=\sum_{k=0}^{(m-3)/2}\|x\|^{2k}v_k(x).
\]
The proof of Almansi's Theorem given, e.g., in \cite[Prop.\ 1.3]{Aronszajn}, shows that also the functions $u_k$, $v_k$ are slice functions on $\OO$. This follows from the fact that given a slice function $g:\OO\to\R$, the function defined for an integer $k\ge1$ by
\begin{equation}\label{eq:aronszajn}
x\mapsto\int_0^1\tau^{k-2+(m+1)/2}g(\tau x)d\tau
\end{equation}
is again slice on $\OO$. This can be seen using again the sliceness criterion recalled above. Formula \eqref{eq:aronszajn} is the basic step in the iterative procedure used in the construction of the harmonic components in the Almansi decomposition.

Let $f_k:=u_k-x^c v_k$ for $k=0,\ldots, (m-3)/2$. Then the $f_k$'s are slice functions on $\OO$, such that
\[
f(x)=h_1(x)-x^ch_2(x)=\sum_{k=0}^{(m-3)/2}\|x\|^{2k}(u_k(x)-x^c v_k(x))=\sum_{k=0}^{(m-3)/2}\|x\|^{2k}f_k(x).
\]
To prove the last statement of the thesis, we observe that for every $A$-valued function $u$, a direct computation gives $\Delta_{\B}(x^c u)=4\partial_\B u+x^c\Delta_{\B}u$. Therefore 
$\Delta_{\B}f_k=\Delta_{\B}(-x^c v_k)=-4\partial_B v_k$ and then $\dibar_\B\Delta_{\B}f_k=-4\dibar_\B\partial_\B v_k=-4\Delta_{\B} v_k=0$. In particular, the slice functions $f_k$ are biharmonic: $\Delta_\B\Delta_B f_k=4\partial_\B(\dB\Delta_\B f_k)=0$.

If $f$ is slice-preserving, then $h_1$ and $h_2$ are real-valued (Corollary \ref{cor:PZDec}). Therefore also the $u_k$'s and $v_k$'s are real-valued and $f_k=u_k-x^cv_k$ sends any slice $\OO\cap \cc_J$ into $\cc_J$, i.e., it is slice-preserving. Conversely, if $f_k(\OO\cap \cc_J)\subseteq\cc_J$ for every $k$ and every $J\in S$, then the same holds for $f=\sum_{k}\|x\|^{2k}f_k$. 
\end{proof}

\begin{remark}
If $\OO$ contains the closure of the unit ball $B:=B_0(1)$ in $M$, and $f=\sum_{k=0}^{(m-3)/2}\|x\|^{2k}f_k$ is the $\dibar\Delta$-decomposition of $f\in\sr(\OO)$, then the $A$-valued slice function
\[
g:=f_0+f_1+\cdots+f_\frac{m-3}2
\]
is a solution of the boundary value problem
\begin{equation}
\begin{cases}\label{BVP1}
\dB\Delta_\B g=0\quad\text{on $B$,}\\
g_{|\s^m}=f_{|\s^m},
\end{cases}
\end{equation}
where $\s^m=\partial B$ is the $m$-dimensional unit sphere in $M$. If $v:=v_0+v_1+\cdots+v_{(m-3)/2}$, with the harmonic functions $v_k$ defined as in the proof of Theorem \ref{thm:dDeltaDec}, then $\Delta_\B f_k=-4\partial_\B v_k$ for every $k$, and therefore $\Delta_\B g=-4\partial_\B v$. This means that the pair of slice functions $(g,v)$ is the unique solution of the boundary value problem
\begin{equation}
\begin{cases}\label{BVP2}
\Delta_\B g+4\partial_\B v=0\quad\text{on $B$,}\\
\Delta_\B v=0\quad\text{on $B$,}\\
g_{|\s^m}=f_{|\s^m},\\
v_{|\s^m}=(\sd f)_{|\s^m}.
\end{cases}
\end{equation}
Uniqueness for the solution of \eqref{BVP2} can seen as follows. If $f\equiv0$, then \eqref{BVP2} implies that $v\equiv0$ since it is harmonic on $B$ and vanishing on $\s^m$. Then also $g$ is harmonic and therefore $g\equiv0$. Note that if $(g,v)$ solves \eqref{BVP2}, then $g$ solves also \eqref{BVP1}.
\end{remark}

Given $f\in\sr(\OO)$, the procedure described in the proof of the preceding theorem gives a unique $\frac{m-1}2$-tuple $(f_0,\ldots,f_{(m-3)/2})$ of slice functions. This follows from the uniqueness of the pair $(h_1,h_2)$ in Corollary \ref{cor:PZDec} and the uniqueness part of Almansi's Theorem. In the following we will refer to these functions $f_0,\ldots,f_{(m-3)/2}$ as the components of the \emph{$\dibar\Delta$-decomposition} of $f$. 
More precisely, the correspondence $f\mapsto (f_0,\ldots,f_{(m-3)/2})$ defines an injective $\rr$-linear map 
\[
\Tdd:\sr(\OO)\to\left(\ker(\dB\Delta_\B)\right)^{\tfrac{m-1}2}.
\]
If we compose $\Tdd$ with the operator $\Delta_\B$ acting on every components of $(f_0,\ldots,f_{(m-3)/2})$, we get a $\rr$-linear map (we call it the \emph{$\dibar\Delta$-Fueter mapping})
\[
\Fdd:\sr(\OO)\to\left(\AM(\OO)\right)^{\tfrac{m-1}2},
\]
where $\AM(\OO)$ is the class of \emph{axially monogenic functions}, i.e., of slice functions on $\OO$ in the kernel of $\dB$:
\[\AM(\OO)=\{f\in\SL^1(\OO)\,|\,\dB f=0\}.
\]

When $m=3$, the previous theorem reduces to point (c) of Theorem\ \ref{thm:n_odd_even} (in this case $\Tdd$ is simply the inclusion operator and $\Fdd=\Delta_\B$). This means that Theorem \ref{thm:dDeltaDec} can be considered as a different generalization of the quaternionic Fueter Theorem, valid on any hypercomplex subspace $M$ of $A$. 
When $M$ is the paravector subspace $\R^{m+1}$ of $\R_m$, then the Sce's generalization \cite{Sce} of Fueter's Theorem  provides a mapping $\Delta^{(m-1)/2}$ from $\sr(\OO)$ to $\AM(\OO)$ (usually called \emph{Fueter mapping}). 
This operator 
has as a ever larger kernel as $m$ increases. On the contrary, we show that the $\dibar\Delta$-Fueter mapping $\Fdd$ has a small kernel for every $m\ge3$.
In a subsequent paper we will study a characterization of the image of the $\Fdd$-Fueter mapping in $\left(\AM(\OO)\right)^{\frac{m-1}2}$.

\begin{proposition}\label{pro:Kernel}
Let $f\in\sr(\OO)$, with $\OO$ an axially symmetric domain of $M$.  Assume that $\OO=\OO_D\cap M$ is a star-like domain with centre 0.
If $A$ is not associative, assume that $f$ and $xf$ are $M$-admissible. Let $m=\dim(M)-1\ge3$ be odd.
If $f\in\ker(\Fdd)$, then $f$ is an affine function, namely, there exist $a,b\in A$ such that $f(x)=xa+b$.
\end{proposition}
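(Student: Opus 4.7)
The strategy is to reduce the problem to Proposition \ref{pro:gamma_slice}(h) by showing that, under the hypothesis $\Fdd(f)=0$, the spherical derivative $\sd f$ is in fact constant. I would proceed in three stages.

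\emph{Stage 1.} I would apply Theorem \ref{thm:dDeltaDec} to write $f = \sum_{k=0}^{(m-3)/2} \|x\|^{2k} f_k$ with $f_k = u_k - x^c v_k$, where the harmonic functions $u_k, v_k$ are the Almansi components of $h_1 = \sd{(xf)}$ and $h_2 = \sd f$ respectively. Two observations are crucial. First, each $v_k$ is a zonal slice function on $\OO$, since the Aronszajn iterative formula \eqref{eq:aronszajn} preserves the zonality that $h_2$ enjoys by Corollary \ref{cor:zonal_decomposition}. Second, from the identity $\Delta_\B f_k = -4\partial_\B v_k$ already derived in the proof of Theorem \ref{thm:dDeltaDec}, the hypothesis $\Fdd(f)=0$ is equivalent to $\partial_\B v_k = 0$ for every $k$.

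\emph{Stage 2.} I would show that each $v_k$ is constant. Since $v_k$ is zonal, $(v_k)'_s \equiv 0$, so Proposition \ref{pro:gamma_slice}(c) gives $\partial_\B v_k = \dd{v_k}{x} - \cS (v_k)'_s = \dd{v_k}{x}$. Hence $\partial_\B v_k = 0$ forces $\dd{v_k}{x} = 0$ as well, so $v_k$ is anti-slice-regular. The second statement of Proposition \ref{pro:gamma_slice}(g) then yields that $v_k$ is locally constant on the connected domain $\OO$, say $v_k \equiv c_k \in A$. Consequently
\[
\sd f = h_2 = \sum_{k=0}^{(m-3)/2} \|x\|^{2k} c_k,
\]
a polynomial in $\|x\|^2$ with constant $A$-valued coefficients.

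\emph{Stage 3.} The slice-regularity of $f$ must now kill every $c_k$ with $k\ge 1$. Since $F = F_1 + \ui F_2$ is holomorphic, $F_2$ is harmonic on $\R^2$ in $(\alpha,\beta)$; from Stage 2, $F_2(\alpha,\beta) = \beta\sum_k(\alpha^2+\beta^2)^k c_k$. A short computation using the elementary identity $\Delta_2\bigl(\beta(\alpha^2+\beta^2)^k\bigr) = 4k(k+1)\beta(\alpha^2+\beta^2)^{k-1}$ gives
\[
\Delta_2 F_2 = 4\beta \sum_{k\ge 1} k(k+1)(\alpha^2+\beta^2)^{k-1}\, c_k,
\]
and the linear independence of the monomials $(\alpha^2+\beta^2)^{k-1}$ forces $c_k=0$ for every $k\ge 1$. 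Thus $\sd f\equiv c_0$, so $\dd{(\sd f)}{x}=0$, and Proposition \ref{pro:gamma_slice}(f) gives $\Delta_\B f = 4\cS\,\dd{(\sd f)}{x}=0$. Finally, Proposition \ref{pro:gamma_slice}(h) yields $f(x) = xa + b$ for some $a,b\in A$.

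The chief obstacle is Stage 2: one must notice that the Almansi components $v_k$ inherit the zonality of $h_2$ (so that $(v_k)'_s$ vanishes), which is precisely what allows the single condition $\partial_\B v_k = 0$ to meet both hypotheses of Proposition \ref{pro:gamma_slice}(g). Without this zonality, $\partial_\B v_k=0$ would not by itself force $v_k$ to be constant. Stage 3 is then a routine two-dimensional harmonicity calculation.
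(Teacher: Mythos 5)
Your proof is correct and follows essentially the same route as the paper's: decompose via Theorem \ref{thm:dDeltaDec}, observe that $\Fdd(f)=0$ forces $\partial_\B v_k=0$, use the zonality of the Almansi components together with Proposition \ref{pro:gamma_slice}(c),(g) to conclude each $v_k$ is constant, and then kill the coefficients $c_k$, $k\ge1$, by the harmonicity of $F_2$. The only cosmetic differences are that the paper justifies the zonality of $u_k,v_k$ via uniqueness of the Almansi decomposition under orthogonal transformations fixing the real axis (rather than by tracking formula \eqref{eq:aronszajn}), and it finishes by recomputing $F(z)=za+b$ directly instead of citing Proposition \ref{pro:gamma_slice}(f),(h); both variants are valid.
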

\begin{proof}
We adopt the same notation used in the proof of Theorem \ref{thm:dDeltaDec}. 
The functions $u_k$, $v_k$ have the same axial symmetry properties as the functions $h_1=\sd{(xf)}$, $h_2=\sd f$, which are zonal with pole 1. This follows from the uniqueness of the Almansi decomposition. Given any orthogonal transformation $T$ of $M\simeq\R^{m+1}$ that fixes the real points, also $h_1\circ T$ is $\frac{m-1}2$-polyharmonic and $u_k\circ T$ is harmonic. Since
\[
h_1(T(x))=\sum_{k=0}^{(m-3)/2}\|x\|^{2k}u_k(T(x))=h_1(x),
\] 
it must be $u_k(T(x))=u_k(x)$ for every $k$. Similarly for $h_2$ and the $v_k$'s. It follows that $v_k=\sd{(f_k)}$, since
\begin{align*}
\sd{(f_k)}&=(x-x^c)^{-1}\left(f_k(x)-f_k(x^c)\right)=(x-x^c)^{-1}\left(u_k(x)-x^cv_k(x)-u_k(x^c)+xv_k(x^c)\right)\\
&=v_k(x).
\end{align*}
If $\Fdd(f)=0$, then $\Delta_\B f_k=0$ for every $k=0,\ldots,(m-3)/2$. Since $f_k=u_k-x^cv_k$, 
it holds $0=\Delta_\B f_k=-4\partial_B v_k$. 
From points (c) of Proposition \ref{pro:gamma_slice} we deduce that $\dd{v_k}x=-c_m((f_k)'_s)'_s=0$ and from point (g) of the same Proposition we infer that $v_k$ must be a constant $a_k\in A$ on $\OO$. 
Therefore 
\[
\sd f(x)=h_2(x)=\textstyle\sum_{k=0}^{(m-3)/2}\|x\|^{2k}a_k=\I\left(\sum_{k=0}^{(m-3)/2}|z|^{2k}a_k\right)\text{\quad for every $x\in\OO\setminus\R$}.
\]
Let $\beta=\|\IM(x)\|$ and $z=\alpha+i\beta\in D\subset\cc$. Since $f=\I(F_1+iF_2)$ is slice-regular and $\sd f=\I(\beta^{-1}F_2)$, the function $F_2(z)=\beta\sum_{k=0}^{(m-3)/2}(\alpha^2+\beta^2)^{k}a_k$ must have harmonic real components in $D$. A direct computation shows that
\[
0=\Delta_2\textstyle F_2(z)=4\beta\left(\sum_{k=1}^{(m-3)/2}(k^2+k)|z|^{2k-2}a_k\right)
\]
from which we deduce that $a_k=0$ for every $k\ge1$. Then 
$F_2=\beta a$ with $a=a_0\in A$. Since $F$ is holomorphic, it follows that $F(z)=\alpha a+b+i\beta a=za+b$, and $f=\I(F)=xa+b$, with $b\in A$.
\end{proof}



As a first corollary of Theorem\ \ref{thm:dDeltaDec}, we obtain a local version of the $\dibar\Delta$-decomposition, valid for all local slice-regular functions.

\begin{corollary}[Local $\dibar\Delta$-decomposition]\label{cor:DDeltaDecolocal}
Let $\OO\subseteq M$ be open and let $f\in\sr_{loc}(\OO)$. If $A$ is not associative, we assume that $f$ and $xf$ are $M$-admissible. Let $m=\dim(M)-1\ge3$ be odd. For every $y\in\OO$, there exist an open ball $B_y\subseteq\OO$ centred at $y$
 and $A$-valued 
 functions $f_0,\ldots,f_{(m-3)/2}$  on $B_y$ in the kernel of the operator $\dB\Delta_{\B}$, 
 such that
\[
f(x)=f_0(x)+\|x-y\|^2f_1(x)+\cdots+\|x-y\|^{m-3}f_{\frac{m-3}2}(x)\text{\quad  $\forall x\in B_y$.}
\]
\end{corollary}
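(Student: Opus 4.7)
The strategy is to reduce the statement to the global $\dibar\Delta$-decomposition of Theorem \ref{thm:dDeltaDec} by means of the Local Extendibility Theorem, followed by an application of the classical Almansi theorem re-centered at the point $y$.

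Fix $y\in\OO$. First, I would apply Theorem \ref{thm:localext} to obtain an open neighbourhood $\OO_y\subseteq\OO$ of $y$ and a slice-regular function $g_y\in\sr(\widetilde{\OO_y})$ which extends $f|_{\OO_y}$; then I shrink if necessary and pick an open ball $B_y$ centred at $y$ with $\overline{B_y}\subseteq\OO_y$. The ball $B_y$ is star-like with respect to $y$ (but in general \emph{not} axially symmetric in $M$ when $y\notin\R$). When $A$ is non-associative, the $M$-admissibility of $g_y$ and $xg_y$ follows from that of $f$ and $xf$ by real-analyticity on the connected component of $\widetilde{\OO_y}$ containing $\OO_y$, so that the hypotheses of Corollary \ref{cor:PZDec} are met on $\widetilde{\OO_y}$.

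Applying Corollary \ref{cor:PZDec} to $g_y$ on the axially symmetric set $\widetilde{\OO_y}$ gives two $A$-valued, zonal, $\frac{m-1}{2}$-polyharmonic functions $h_1=\sd{(xg_y)}$ and $h_2=\sd{g_y}$ such that $g_y(x)=h_1(x)-x^c h_2(x)$ on $\widetilde{\OO_y}$. Restricting to $B_y$ and expanding $h_j=\sum_i h_j^i\,v_i$ in the real basis $\B_A$ of $A$, each real component $h_j^i$ is $\frac{m-1}{2}$-polyharmonic on $B_y$; since $B_y$ is star-like with respect to $y$, the classical Almansi theorem (see \cite{Almansi,Aronszajn}) produces harmonic functions $w_{j,k}^i$ on $B_y$ with $h_j^i(x)=\sum_{k=0}^{(m-3)/2}\|x-y\|^{2k}w_{j,k}^i(x)$. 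Recombining in the basis yields $A$-valued harmonic functions $w_{j,k}$ with
\[
h_j(x)=\sum_{k=0}^{(m-3)/2}\|x-y\|^{2k}w_{j,k}(x)\quad\text{on }B_y,\quad j=1,2.
\]

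Finally, I would set $f_k:=w_{1,k}-x^c w_{2,k}$ for $k=0,\ldots,(m-3)/2$. Substituting into the zonal decomposition gives the desired identity $f(x)=g_y(x)=\sum_{k=0}^{(m-3)/2}\|x-y\|^{2k}f_k(x)$ on $B_y$. To check that $f_k\in\ker(\dB\Delta_\B)$, I would use the elementary identity $\Delta_\B(x^c u)=4\partial_\B u+x^c\Delta_\B u$ (already employed in the proof of Theorem \ref{thm:dDeltaDec}) together with the harmonicity of $w_{1,k}$ and $w_{2,k}$ to deduce $\Delta_\B f_k=-4\partial_\B w_{2,k}$, and then Proposition \ref{pro:powers}(b) to conclude $\dB\Delta_\B f_k=-\Delta_\B w_{2,k}=0$.

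The only step that is not a direct transcription of material already established in the paper is the component-wise application of Almansi's theorem centred at a possibly non-real point $y$; but since $B_y$ is star-like with respect to $y$ and Almansi's theorem applies on any such domain, this is entirely routine once the scalar decomposition is done in a real basis of $A$. The key conceptual move is that the Local Extendibility Theorem allows us to replace the locally slice-regular $f$ by a genuine slice-regular $g_y$ on an axially symmetric set to which Corollary \ref{cor:PZDec} applies, while Almansi's theorem provides the polynomial decomposition in powers of $\|x-y\|^2$ independently of any axial-symmetry requirement.
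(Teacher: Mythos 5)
Your proposal is correct and follows essentially the same route as the paper's own proof: local extendibility to a slice-regular $g_y$ on $\widetilde{\OO_y}$, the polyharmonic zonal decomposition of Corollary \ref{cor:PZDec}, a component-wise application of the classical Almansi theorem on a ball centred at $y$ (the paper translates to the origin first, which is the same thing), and the identity $\Delta_\B(x^c u)=4\partial_\B u+x^c\Delta_\B u$ combined with Proposition \ref{pro:powers}(b) to verify $f_k\in\ker(\dB\Delta_\B)$. Your additional remark on transferring $M$-admissibility to $g_y$ in the non-associative case is a small point the paper leaves implicit, but it does not change the argument.
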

\begin{proof}
By Theorem \ref{thm:localext}, there exists an open neighbourhood $\OO_y\subseteq\OO$ of $y$ and a slice-regular function $g\in\sr(\widetilde{\OO_y})$ that extends the restriction $f_{|\OO_y}$. On $\widetilde{\OO_y}$ we can write (Corollary \ref{cor:PZDec}) $g=h_1-x^ch_2$, with $h_1$ and $h_2$ $\frac{m-1}2$-polyharmonic functions.   

Let $B_y\subseteq\OO_y$ be an open ball centred at $y$. Then $B_0:=B_y-y$ is a ball centred at 0, where we can apply Almansi's Theorem to the $d$ real components of $\tilde h_1(x'):=h_1(x'+y)$ and $\tilde h_2(x'):=h_2(x'+y)$, which are $\frac{m-1}2$-polyharmonic w.r.t.\ $x'\in B_0$. We get $A$-valued harmonic functions $u_k$ and $v_k$ ($k=0,\ldots,(m-3)/2$) on $B_0$ such that 
\[\textstyle
\tilde h_1(x')=\sum_{k=0}^{(m-3)/2}\|x'\|^{2k}u_k(x')\quad\text{and}\quad \tilde h_2(x')=\sum_{k=0}^{(m-3)/2}\|x'\|^{2k}v_k(x').
\]
Then, for every $x=x'+y\in B_y$, it holds
\[\textstyle
h_1(x)=\sum_{k=0}^{(m-3)/2}\|x-y\|^{2k}u_k(x-y)\quad\text{and}\quad h_2(x)=\sum_{k=0}^{(m-3)/2}\|x-y\|^{2k}v_k(x-y).
\]
The functions $\tilde u_k(x):= u_k(x-y)$ and $\tilde v_k(x):= v_k(x-y)$ are harmonic on $B_y$. Let $f_k(x):=\tilde u_k(x)-x^c \tilde v_k(x)$ for $x\in B_y$. Then on $B_y$ we can expand $f$ as
\[\textstyle
f(x)=g(x)=h_1(x)-x^c h_2(x)=\sum_{k=0}^{(m-3)/2}\|x-y\|^{2k}f_k(x),
\]
with $\Delta_{\B}f_k=\Delta_{\B}(-x^c \tilde v_k)=-4\partial_B \tilde v_k$ and then $\dibar_\B\Delta_{\B}f_k=-4\dibar_\B\partial_\B \tilde v_k=-4\Delta_{\B} \tilde v_k=0$.
\end{proof}

Observe that the functions $f_k$'s in the expansion of Corollary \ref{cor:DDeltaDecolocal}, differently from what happens in the global case, are not necessarily slice functions or restrictions of slice functions to the ball $B_y$.

As a second corollary of Theorem \ref{thm:dDeltaDec} we give an explicit formula the $\dibar\Delta$-decomposition for slice-regular polynomials. In the case of homogeneous polynomials, the Almansi decomposition reduces to what in the literature is called the \emph{Gauss} (or \emph{canonical}) \emph{decomposition} of polynomials (see e.g.\ \cite[Ch.9]{Vilenkin}). If $p_n$ is a homogeneous polynomial of degree $n$ in $m+1$ variables $x_0,\ldots,x_m$, then there exist harmonic homogeneous polynomials $q_{n-2k}$ of degree $n-2k$ if $q_{n-2k}\not\equiv0$, for $k=0,\ldots,s$, with $s=\lfloor{n/2}\rfloor$, such that
\[
p_n=q_n+\|x\|^2q_{n-2}+\cdots+\|x\|^{2s}q_{n-2s}.
\]
The harmonic component $\Pi_k(p_n):=q_{n-2k}$ of degree $n-2k$ of $p_n$ can be computed by means of the following formula (see e.g.\ \cite{Avery}):
\begin{equation}\label{eq:pik}
\Pi_k(p_n)=\frac{(m+2n-4k-1)!!}{(2k)!!(m+2n-2k-1)!!}\sum_{j=0}^{\lfloor{n/2-k}\rfloor} 
\frac{(-1)^j(m+2n-4k-2j-3)!!}{(2j)!!(m+2n-4k-3)!!}\|x\|^{2j}\Delta_\B^{j+k}p_n.
\end{equation}
We extend the projection operator $\Pi_k$ to any polynomial $p\in A[x_0,\ldots,x_m]$ by additivity, after writing $p$ as the sum of its homogeneous components. 

\begin{corollary}[Polynomial $\dibar\Delta$-decomposition]\label{cor:DDeltaDecPoly}
Let $f=\sum_{n=0}^Nx^na_n\in A[x]$. If $A$ is not associative, assume that $f$ and $xf$ are $M$-admissible. Let $m=\dim(M)-1\ge3$ be odd. 
Define 
\begin{equation}\label{eq:fk}
f_k:=-\tfrac{2}{m-1}\big(\Pi_k(\dB(xf))-x^c\Pi_k(\dB f)\big) \text{\quad for $k=0,\ldots,\tfrac{m-3}2$.}
\end{equation}
Then every $f_k\in A[x_0,\ldots,x_m]$ is an $A$-valued polynomial slice function in the kernel of the operator $\dB\Delta_{\B}$, such that
\[
f(x)=f_0(x)+\|x\|^2f_1(x)+\cdots+\|x\|^{m-3}f_{\frac{m-3}2}(x)\text{\quad  $\forall x\in M$.}
\]
If $\deg(f)=N$, then  $\deg(f_k)=N-2k$ or $f_k\equiv0$. The $\dibar\Delta$-Fueter mapping sends $f$ to the sequence $\Fdd(f)=(g_0,\ldots,g_{(m-3)/2})$, where 
\[
g_k:=\tfrac8{m-1}\,\partial_\B\Pi_k(\dB f)=-4\,\partial_\B\Pi_k(\sd f)
\]
is an axially monogenic polynomial 
of degree $N-2k-2$ (if $g_k\not\equiv0$), for $k=0,\ldots,(m-3)/2$. 
\end{corollary}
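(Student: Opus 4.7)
The plan is to specialize the proof of Theorem \ref{thm:dDeltaDec} to the polynomial category and to make the Almansi components completely explicit via the Gauss projection formula \eqref{eq:pik}. First I would apply Corollary \ref{cor:zonal_decomposition} with $\OO = M$ to write the zonal decomposition
\[
f = h_1 - x^c h_2, \qquad h_1 = c_m^{-1}\,\dB(xf), \qquad h_2 = c_m^{-1}\,\dB f.
\]
Since $\dB$ is a first-order constant-coefficient operator on $M$, both $h_1$ and $h_2$ belong to $A[x_0,\dots,x_m]$; by Theorem \ref{thm:n_odd_even}(b) applied to $xf$ and $f$, they are $\tfrac{m-1}{2}$-polyharmonic.

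Next I would apply the Gauss (canonical) decomposition homogeneous-component by homogeneous-component and extend by linearity. The harmonic projection at level $k$ is exactly the operator $\Pi_k$ of formula \eqref{eq:pik}, and the polyharmonicity order forces the expansion to stop at $k = (m-3)/2$. Setting $f_k := \Pi_k(h_1) - x^c\,\Pi_k(h_2)$ then reproduces formula \eqref{eq:fk} and yields $f = \sum_{k=0}^{(m-3)/2}\|x\|^{2k}f_k$.

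To verify the remaining claims I would essentially re-use the arguments in the proof of Theorem \ref{thm:dDeltaDec}. For sliceness, that proof constructs slice harmonic components through the Aronszajn integral \eqref{eq:aronszajn}; by uniqueness of the Almansi/Gauss decomposition these must coincide with $\Pi_k(h_i)$, so each $f_k$ is slice. For the kernel property, harmonicity of $\Pi_k(h_2)$ and the identity $\Delta_\B(x^c u) = 4\partial_\B u + x^c\Delta_\B u$ used in the proof of Theorem \ref{thm:dDeltaDec} give
\[
\Delta_\B f_k = -4\,\partial_\B\Pi_k(h_2), \qquad \dB\Delta_\B f_k = -\Delta_\B\Pi_k(h_2) = 0,
\]
by Proposition \ref{pro:powers}(b). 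The same identities yield $g_k = \Delta_\B f_k = -4\,\partial_\B\Pi_k(h_2) = \tfrac{8}{m-1}\,\partial_\B\Pi_k(\dB f)$, which by linearity of $\Pi_k$ and the identity $\dB f = c_m\,\sd f$ equals $-4\,\partial_\B\Pi_k(\sd f)$; moreover $\dB g_k = 0$, so $g_k \in \AM(\OO)$.

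The degree count follows from Proposition \ref{pro:powers}(a): $\dB$ lowers polynomial degree by one, so $\dB f$ and $\dB(xf)$ have degrees $N-1$ and $N$ respectively; $\Pi_k$ lowers each homogeneous degree by $2k$, and multiplication by $x^c$ raises degree by one. Hence $\deg(f_k)\le N-2k$ and $\deg(g_k)\le N-2k-2$. The one subtle point in the plan --- and the main obstacle to a clean argument --- is justifying the truncation of the Gauss decomposition at $k = (m-3)/2$ uniformly across all homogeneous components of $h_i$: higher-order terms $\|x\|^{2k}\Pi_k(h_i)$ for $k \ge \tfrac{m-1}{2}$ could a priori contribute to individual homogeneous pieces of $h_i$, but these are ruled out by the polyharmonicity bound of Theorem \ref{thm:n_odd_even} combined with the uniqueness of the Almansi components within the class of $\tfrac{m-1}{2}$-polyharmonic functions.
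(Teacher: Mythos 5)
Your proposal follows the paper's own proof essentially verbatim: the zonal decomposition $f=h_1-x^c h_2$ with $h_1=c_m^{-1}\dB(xf)$, $h_2=c_m^{-1}\dB f$, the identification of the Almansi components with $\Pi_k(h_j)$ via formula \eqref{eq:pik} and the vanishing of $\Pi_k$ for $k>(m-3)/2$ forced by $\tfrac{m-1}2$-polyharmonicity, the appeal to Theorem \ref{thm:dDeltaDec} for sliceness and the kernel property, and the computation $g_k=\Delta_\B f_k=-4\partial_\B\Pi_k(h_2)$. The argument is correct and no genuinely different ideas are introduced.
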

\begin{proof}
Let $f=h_1-x^ch_2$ be the decomposition of Corollary~\ref{cor:PZDec}, with $h_1=\sd{(xf)}=c_m^{-1}\dB(xf)$ and $h_2=\sd f=c_m^{-1}\dB f$. Since $h_1$ and $h_2$ are $\frac{m-1}2$-polyharmonic, it holds $\Delta_\B^k h_j=0$ for $j=1,2$ and $k>(m-3)/2$. Then formula \eqref{eq:pik} implies that the operator $\Pi_k$ vanishes on $h_1$ and $h_2$ for every $k>(m-3)/2$. Therefore the harmonic functions $u_k=\Pi_k(h_1)$ and $v_k=\Pi_k(h_2)$ give the Almansi decomposition of $h_1$ and $h_2$:
\[\textstyle
h_1(x)=\sum_{k=0}^{(m-3)/2}\|x\|^{2k}u_k(x)\quad\text{and}\quad h_2(x)=\sum_{k=0}^{(m-3)/2}\|x\|^{2k}v_k(x).
\]
From Theorem \ref{thm:dDeltaDec}, we have $f_k=u_k-x^c v_k=\Pi_k(h_1)-x^c\,\Pi_k(h_2)$, and this concludes the proof of \eqref{eq:fk}. The last statement is a consequence of the equality $\Delta f_k=-4\partial_\B v_k=-4\partial_\B\Pi_k(h_2)$ and Proposition \ref{pro:gamma_slice}(d).
\end{proof}

\subsection{A second polyharmonic decomposition of slice-regular functions}\label{sec:different}

In the case of quaternions ($M=\hh$, $m=\dim M-1=3$), an axially monogenic decomposition for slice-regular functions was proved in \cite[Theorem 8]{NewCauchy}: a slice-regular function $f$ can be written uniquely as $f(x)=g_1(x)-\overline x g_2(x)$, where $g_1$ and $g_2$ are quaternionic axially monogenic functions. This result provides another bridge between the slice-regular function theory on one side and the one of Fueter-regular functions on the other side. 
 We now generalize this result to any hypercomplex subspace $M$ of $A$.  In the following we call a function $g\in\sr(\OO)$ a \emph{slice-regular primitive} of $f\in\sr(\OO)$ w.r.t.\ the slice derivative $\dd{}x$ if it holds $\dd g x =f$.

\begin{proposition}\label{pro:AMDec}
Let $f\in\sr(\OO)$, with $\OO$ an axially symmetric open subset of $M$.  Assume that $\OO=\OO_D\cap M$, and that every connected component of $D$ is simply connected.
If $A$ is not associative, assume that $f$ and $xf$ are $M$-admissible. Let $m=\dim(M)-1\ge2$. Then there exist two $A$-valued slice functions $g_1$, $g_2$ on $\OO$ such that 
\[
f(x)=g_1(x)-x^c g_2(x)
\]
for every $x\in\OO$, and with 
\begin{align}\label{eq:oddeven}
(-\Delta_\B)^{1/2}g_j=0&\text{\quad for $j=1,2$ if $m=2$},\notag\\
\dB(\Delta_\B)^{\frac{m-3}2}g_j=(\Delta_\B)^{\frac{m-3}2}\dB g_j=0&\text{\quad for $j=1,2$ if $m\ge3$ is odd, }\\\notag
(-\Delta_\B)^{1/2}(\Delta_\B)^{\frac{m-4}2}\dB g_j=0&\text{\quad for $j=1,2$ if $m\ge4$ is even}.
\end{align}
The functions $g_1,g_2$ can be obtained by a slice-regular primitive $g$ of $f$ by means of the formulas
\begin{equation}\label{eq:g1g2}
g_1:=(4\cS)^{-1}\Delta_\B(xg), \quad g_2=(4\cS)^{-1}\Delta_\B g.
\end{equation}
The result holds locally for every locally slice-regular function. In this case no topological assumption on $D$ is needed. 
\end{proposition}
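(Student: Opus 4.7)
The plan is to extract $g_1$ and $g_2$ from a slice-regular primitive $g$ of $f$, and then verify the decomposition by applying the slice derivative $\dd{}{x}$ to the zonal decomposition of $g$ furnished by Corollary \ref{cor:zonal_decomposition}.

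First I would construct a slice-regular primitive $g\in\sr(\OO)$ of $f$. Writing $f=\I(F)$ with $F$ holomorphic on $D$, the simple-connectedness of each component of $D$ allows one to find an $A\otimes_{\R}\C$-valued holomorphic primitive $G$ of $F$ by integration along a path. Using the conjugation invariance of $D$ and the stem condition $F(\overline z)=\overline{F(z)}$, one can further arrange $G(\overline z)=\overline{G(z)}$: on components meeting $\R$ pick the base point real, and otherwise pair up conjugate components and define $G$ on one of them, extending to the conjugate by reflection. Then $g:=\I(G)\in\sr(\OO)$ satisfies $\dd{g}{x}=f$. Since $x$ is slice-preserving, $xg\in\sr(\OO)$ as well, and the formulas \eqref{eq:g1g2} produce $A$-valued slice functions $g_1,g_2$: indeed, Proposition \ref{pro:gamma_slice}(f) gives $(4\cS)^{-1}\Delta_\B g=\dd{(\sd g)}{x}$, which is manifestly a slice function.

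Next, I would apply $\dd{}{x}$ to the zonal decomposition $g=\sd{(xg)}-x^c\sd g$ given by Corollary \ref{cor:zonal_decomposition}. Since $x^c$ is slice-preserving, the pointwise product $x^c\sd g$ agrees with the slice product $x^c\cdot\sd g$ and the Leibniz rule for the slice derivative applies; noting that $\dd{x^c}{x}=0$ (its stem function $\overline z$ is annihilated by $\partial_z$), this yields
\[
f=\dd{g}{x}=\dd{(\sd{(xg)})}{x}-x^c\,\dd{(\sd g)}{x}=g_1-x^c g_2,
\]
the last equality being Proposition \ref{pro:gamma_slice}(f) applied to the slice-regular functions $g$ and $xg$.

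Finally, the polyharmonic/monogenic conditions \eqref{eq:oddeven} on $g_j$ follow by applying Theorem \ref{thm:n_odd_even} to $g$ and to $xg$ and using that $\dB$, $\Delta_\B$ and $(-\Delta_\B)^{1/2}$ pairwise commute: for example, when $m\ge3$ is odd, $\dB(\Delta_\B)^{(m-3)/2}g_2=(4\cS)^{-1}\dB(\Delta_\B)^{(m-1)/2}g=0$ by part (c); the case $m=2$ uses (d') and the case $m\ge4$ even uses (c'), with identical derivations for $g_1$ obtained by replacing $g$ with $xg$. For the local version, Theorem \ref{thm:localext} furnishes a slice-regular extension of $f$ on a symmetric neighborhood of each $y\in\OO$; after shrinking so that the parameter domain is simply connected (e.g., a disc) the global construction above produces the desired $g_1,g_2$ locally, with no topological assumption on $D$ required. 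The main delicate point is the construction of the primitive: one must check that the holomorphic primitive $G$ can be chosen to satisfy $G(\overline z)=\overline{G(z)}$, which is exactly where the simple-connectedness hypothesis enters.
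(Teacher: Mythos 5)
Your proposal is correct and follows essentially the same route as the paper: construct a slice-regular primitive $g$ of $f$ via symmetrized holomorphic primitives of the stem function (this is the content of the paper's auxiliary lemma), differentiate the identity $g=\sd{(xg)}-x^c\sd g$ using the Leibniz rule and $\dd{x^c}{x}=0$, identify the resulting terms with $(4\cS)^{-1}\Delta_\B(xg)$ and $(4\cS)^{-1}\Delta_\B g$ via Proposition \ref{pro:gamma_slice}(f), and deduce \eqref{eq:oddeven} from Theorem \ref{thm:n_odd_even}(c),(c'),(d'). The only cosmetic difference is that the paper expands $g=\vs g+\IM(x)\sd g$ directly rather than quoting the zonal decomposition, which amounts to the same computation.
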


Before proving the Proposition, we give a result about the existence of slice-regular primitives of slice-regular functions.  

\begin{lemma}
Let $\OO$ be an axially symmetric open subset of $M$. Assume that $\OO=\OO_D\cap M$, and that every connected component of $D$ is simply connected. Then every $f\in\sr(\OO)$ has a slice-regular primitive w.r.t.\ the slice derivative $\dd{}x$, namely there exists $g\in\sr(\OO)$ such that $\dd g x =f$. The primitive is unique up to the addition of a slice-constant function.
\end{lemma}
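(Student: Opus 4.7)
The plan is to lift the problem to the stem-function level. Write $f=\I(F)$ with $F\colon D\to A\otimes_\R\C$ holomorphic, and recall that the slice derivative is given by $\partial f/\partial x=\I(\partial F/\partial z)$. It is therefore enough to produce a holomorphic stem function $G\colon D\to A\otimes_\R\C$ with $\partial G/\partial z=F$; then $g:=\I(G)\in\sr(\OO)$ will satisfy $\partial g/\partial x=f$. Uniqueness will be immediate: if $g_1,g_2$ are two such primitives, then $h:=g_1-g_2\in\sr(\OO)$ has $\partial h/\partial x=0$, hence $h\in\SC(\OO)$ by the characterization of slice-constant functions recalled in Section~\ref{sec:pre}.

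Two ingredients will drive the existence argument. First, simple connectedness of each connected component of $D$ ensures, by standard path integration, that $F$ admits on each component a holomorphic primitive in the sense of the complex structure on $A\otimes_\R\C$, unique up to an additive constant in $A\otimes_\R\C$. Second, the involution $w\mapsto\overline w$ on $A\otimes_\R\C$ is conjugate-linear with respect to $\ui$, so for any holomorphic $A\otimes_\R\C$-valued function $H$ on an open $U\subseteq\C$, the function $\tilde H(z):=\overline{H(\overline z)}$ is holomorphic on $\overline U$ with $\tilde H'(z)=\overline{H'(\overline z)}$.

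Working component by component, let $D_0$ be a connected component of $D$. If $D_0$ is stable under conjugation, then $D_0\cap\R\ne\emptyset$: otherwise $D_0$ would be an open connected subset of $\C\setminus\R$, necessarily contained in one of the two open half-planes, contradicting conjugation-invariance. I then pick $z_0\in D_0\cap\R$ and set $G(z):=\int_{z_0}^z F(\zeta)\,d\zeta$, which is path-independent by simple connectedness. Replacing a path $\gamma$ from $z_0$ to $z$ by the conjugate path $\overline\gamma$ (which runs from $z_0$ to $\overline z$ inside $D_0$) and using $F(\overline\zeta)=\overline{F(\zeta)}$ together with conjugate-linearity of $\,\overline{\phantom w}\,$ yields $G(\overline z)=\overline{G(z)}$. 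If instead $D_0$ is not conjugation-stable, then $\overline{D_0}$ is a distinct component of $D$; I choose any holomorphic primitive $G$ of $F$ on $D_0$ and extend to $\overline{D_0}$ by $G(z):=\overline{G(\overline z)}$. By the second ingredient, this extension is a holomorphic primitive of $F$ on $\overline{D_0}$, and the stem identity holds on $D_0\cup\overline{D_0}$ by construction. Assembling these pieces across all components produces the desired stem primitive $G$ of $F$ on $D$, and hence the slice-regular primitive $g=\I(G)$ on $\OO_D$ whose restriction to $\OO=\OO_D\cap M$ solves the problem.

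The only delicate point is the self-conjugate case, where one must arrange the symmetry $G(\overline z)=\overline{G(z)}$ in addition to the complex-primitive condition. The key is the topological observation that a conjugation-stable connected open subset of $\C$ must meet $\R$, which provides the real base point making the path-integral primitive automatically a stem function; the non-self-conjugate case is then handled cost-free by transporting one primitive to its conjugate component via $\overline{\phantom w}$.
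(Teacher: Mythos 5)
Your proof is correct and follows essentially the same route as the paper's: lift to the stem-function level, use simple connectedness of the components of $D$ to produce holomorphic primitives, enforce the stem symmetry $G(\overline z)=\overline{G(z)}$ by treating conjugation-stable and non-stable components separately, and deduce uniqueness from the characterization of slice-constant functions. The only cosmetic differences are that the paper decomposes $F$ into real components w.r.t.\ a basis of $A$ and symmetrizes an arbitrary primitive via $\tfrac12\bigl(G(z)+\overline{G(\overline z)}\bigr)$ on slice domains, whereas you work directly with $A\otimes_\R\C$-valued integrals anchored at a real base point --- both are valid.
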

\begin{proof}
Since any axially symmetric open set is union of {}a family of slice domains or product domains, we can assume that $\OO$ is a domain of one of these types. 
We can adapt the proof given in \cite[Theorem 8]{NewCauchy} for the case of quaternions.
Let $(e_0,e_1,\ldots,e_{d-1})$ be a real basis of $A$. If $f=\I(F)$, the decomposition $F=\sum_{i=0}^{d-1}F^ie_i$ defines $d$ holomorphic stem functions $F^i:D\to\rr\otimes\cc\simeq\cc$.  

If $\OO=\OO_D$ is a slice domain, then $D$ is a simply connected subset of $\cc$. Given a holomorphic primitive $G^i:D\to\cc$ of $F^i$, for $i=0,1,\ldots,d-1$, let $\tilde G^i(z):=\frac12(G^i(z)+\overline{G^i(\overline z)})$ on $D$.
Then $\tilde G^i$ is a holomorphic stem function on $D$ such that $\dd{\tilde G^i}{z}=F^i$. The slice function $g=\I(\sum_{i=0}^{d-1}\tilde G^ie_i)$ is then a slice-regular primitive of $f$. 

If $\OO=\OO_D$ is a product domain, then $D^+:=D\cap\cc^+$ is simply connected. Then there exist holomorphic primitives $G^i_+:D^+\to\cc$ of $F^i$, for $i=0,1,\ldots,d-1$. Define $G^i_-$ on $D^-:=D\cap\cc^-$ by $G_-^i(z):=\overline{G_+^i(\overline z)}$. Then the function $G^i$ defined as $G^i_+$ on $D^+$ and as  $G^i_-$ on $D^-$ is a holomorphic stem function on $D$ with $\dd{G^i}{z}=F^i$. We conclude observing that the sum $\sum_{i=0}^{d-1} G^ie_i$ induces a slice-regular primitive of $f$. 

If $g$ and $h$ are two slice-regular primitives of $f$, then $\dd{(g-h)}x=\dd{(g-h)}{x^c}=0$. Therefore $g-h\in\SC(\OO)$,  i.e., it is induced by a locally constant stem function. 
\end{proof}

Observe that when $\OO$ is a slice domain, then a slice-constant function on $\OO$ is a constant.

An open subset $\OO_D$ such that every connected component of $D$ is simply connected is sometimes called a \emph{basic domain} in the literature (see e.g.\ \cite{GPV}).

\begin{proof}[Proof of Proposition \ref{pro:AMDec}]
Let $g$ a slice-regular primitive of $f$ on $\OO$. Then, using the Leibniz-type formula for the spherical derivative and value (see \cite[\S5]{AIM2011}) and Proposition \ref{pro:gamma_slice}(e), we get 
\begin{align*}
f=\dd g x&=\dd{}x(\vs g+\IM(x)\sd g)=\dd{}x(\vs g+(x_0-x^c)\sd g)=\dd{}x(\vs g+x_0\sd g)-x^c\dd{(\sd g)}x\\
&=\dd{(\sd{(x g)})}x-x^c\dd{(\sd g)}x= (4\cS)^{-1}\left(\Delta_\B(xg)-x^c\Delta_\B g\right).
\end{align*}
Setting
\[
g_1:=(4\cS)^{-1}\Delta_\B(xg), \quad g_2=(4\cS)^{-1}\Delta_\B g,
\]
we get $f=g_1-x^c g_2$. Since $g$ and $xg$ are slice-regular, for $m\ge3$ the validity of \eqref{eq:oddeven} follows immediately from points (c) and (c') of Theorem \ref{thm:n_odd_even}. If $m=2$, it follows from point (d') of the same Theorem.

If $f$ is only locally slice-regular, near any point $x=\alpha+J\beta\in\OO$ one can take an axially symmetric domain satisfying the topological assumption, to which $f$ extends slice-regularly. The previous argument then gives the local result.
\end{proof}

The functions $g_1$ and $g_2$ are in particular $\frac{m-1}2$-polyharmonic. Therefore Proposition \ref{pro:AMDec} refines Corollary \ref{cor:PZDec}. Observe however that now the functions are slice but not zonal in general.

\subsection{Examples: Clifford algebras, octonions, reduced quaternions}

In the quaternionic case with $M=\hh$ (and then $m=3$), the decomposition of Section \ref{sec:different} is already known (see \cite{AlmansiH,NewCauchy}) and the result proved in Theorem \ref{thm:dDeltaDec} reduces to Fueter's Theorem. We then give some examples in other hypercomplex subspaces. 

\subsubsection*{Clifford algebras $\R_m$}
Let $M=\R^{m+1}\subseteq\R_m$ be the paravector subspace of the Clifford algebra $\R_m=\R_{0,m}$ and $\B=(1,e_1,\ldots,e_m)$. Then $\dB$ is the Cauchy-Riemann operator
\[\dibar=\frac12\left(\dd{}{x_0}+e_1\dd{}{x_1}+\cdots+e_n\dd{}{x_n}\right)\]
 of $\Rn$ and $\Delta_\B$ is the Euclidean Laplacian $\Delta_{m+1}$ of $\R^{m+1}$. 

\begin{example}
Let $M=\R^6\subseteq\R_5$ be the paravector subspace of the Clifford algebra $\R_5$. 
Since $m=5$, it holds $c_m=-2$. Consider for example the power $x^3$ of the Clifford variable restricted to $\R^6$.  The biharmonic decomposition of Corollary \ref{cor:PZDec} is
\[
x^3=h_1(x)-x^c h_2(x)\text{\quad  $\forall x\in\R^6\subseteq\R_5$},
\]
with zonal biharmonics $h_1,h_2$ in $\R^6$, given by 
\begin{align*}
h_1(x)&=-\tfrac12\dibar(x^4)=4x_0(x_0^2-|\IM(x)|^2)
,\quad\text{and}\\
h_2(x)&=-\tfrac12\dibar(x^3)=3x_0^2-|\IM(x)|^2.
\end{align*}
Theorem \ref{thm:dDeltaDec} permits to refine the decomposition with functions $f_0$, $f_1$ in the kernel of the operator $\dibar\Delta$, where $\Delta=\Delta_6$. It holds $x^3=f_0+|x|^2f_1$, with
\begin{align*}
f_0(x)&=\tfrac13\left(5x_0^3-7x_0|\IM(x)|^2+10x_0^2\IM(x)-2|\IM(x)|^2\IM(x)\right)
,\\
f_1(x)&=-\tfrac13\left(2x_0+\IM(x)\right).
\end{align*}
These polynomials that can be computed applying formula \eqref{eq:pik}.
\end{example}

Proposition \ref{pro:AMDec} applied to the powers $x^k$ of the Clifford variable restricted to the paravectors, gives the next Corollary, valid for every Clifford algebra $\R_m$. 

\begin{corollary}\label{cor:xk}
For every integer $k\ge0$ and every $x\in\R^{m+1}$, it holds 
\[
x^k=-\frac1{2(m-1)(k+1)}\left(\Delta_{m+1}(x^{k+2})-x^c \Delta_{m+1}(x^{k+1})\right).
\]
The same formula holds for negative integers $k\le-2$ and $x\in\R^{m+1}\setminus\{0\}$. \qed
\end{corollary}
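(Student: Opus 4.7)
The plan is to apply Proposition \ref{pro:AMDec} directly to $f(x)=x^k$ in the Clifford setting $M=\R^{m+1}\subseteq\R_m$, where the algebra is associative (so the $M$-admissibility hypothesis is automatic) and where $\dB=\dibar$ and $\Delta_\B=\Delta_{m+1}$. The key observation is that for $k\ne-1$ the monomial $g(x):=x^{k+1}/(k+1)$ is a slice-regular primitive of $f$ with respect to the slice derivative, since $\partial g/\partial x = x^k$. This works globally on $\R^{m+1}$ when $k\ge 0$ and on $\R^{m+1}\setminus\{0\}$ when $k\le -2$.

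Once the primitive $g$ is at hand, I would invoke formula \eqref{eq:g1g2} of Proposition \ref{pro:AMDec}, which yields the decomposition $f=g_1-x^c g_2$ with
\[
g_1=(4c_m)^{-1}\Delta_\B(xg),\qquad g_2=(4c_m)^{-1}\Delta_\B g.
\]
By Artin's theorem the subalgebra generated by $x$ is associative, so $xg = x^{k+2}/(k+1)$ and the two terms simplify to
\[
g_1=\frac{1}{4c_m(k+1)}\,\Delta_{m+1}(x^{k+2}),\qquad g_2=\frac{1}{4c_m(k+1)}\,\Delta_{m+1}(x^{k+1}).
\]
Substituting $c_m=(1-m)/2$, hence $4c_m=-2(m-1)$, produces the advertised constant $-1/(2(m-1)(k+1))$ and gives the stated identity.

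The only subtlety is that for $k\le -2$ the natural domain $\R^{m+1}\setminus\{0\}$ corresponds to $D=\C\setminus\{0\}$, which is connected but not simply connected, so Proposition \ref{pro:AMDec} does not apply with its stated topological hypothesis. However, the existence of a primitive is the only reason that hypothesis is needed in the proof of that proposition, and here we already have an explicit primitive $g=x^{k+1}/(k+1)$. Equivalently, one may invoke the local version stated at the end of Proposition \ref{pro:AMDec}: around any point $x_0\ne 0$ one can choose an axially symmetric neighborhood with simply connected associated $D$, where the formula holds, and then extend by the real analyticity of both sides on the connected set $\R^{m+1}\setminus\{0\}$. I expect this topological bookkeeping for negative exponents to be the main (mild) obstacle; the positive exponent case is an immediate application of Proposition \ref{pro:AMDec}.
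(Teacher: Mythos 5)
Your proof is correct and follows exactly the route the paper intends: the corollary is stated as an immediate consequence of Proposition \ref{pro:AMDec} applied to $f(x)=x^k$ with the explicit slice-regular primitive $g(x)=x^{k+1}/(k+1)$, and your computation of the constant via $4c_m=-2(m-1)$ matches. Your extra care about the case $k\le-2$ (where $D=\C\setminus\{0\}$ is not simply connected, so one either notes that the topological hypothesis is only used to produce a primitive, which is here explicit, or applies the local version and real analyticity) is a point the paper leaves implicit, and your resolution of it is sound.
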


\begin{example}
Let again $M=\R^6\subseteq\R_5$ and $f(x)=x^3$.
The decomposition of Proposition \ref{pro:AMDec} (or Corollary \ref{cor:xk}) is 
\[
x^3=g_1(x)-x^c g_2(x),
\]
with
\begin{align*}
g_1(x)&=-\tfrac1{32}\Delta(x^5)=\tfrac52\left(x_0^3-x_0|\IM(x)|^2+x_0^2\IM(x)\right)-\tfrac12|\IM(x)|^2\IM(x)
,\quad\text{and}\\
g_2(x)&=-\tfrac1{32}\Delta(x^4)=\tfrac12\left(3x_0^2-|\IM(x)|^2+2x_0\IM(x)\right)
.
\end{align*}
It holds $\dibar g_1=-5x_0^2+|\IM(x)|^2$, $\dibar g_2=-2x_0$, and $\dibar\Delta g_j=\Delta\dibar g_j=0$ for $j=1,2$, as expected. 
\end{example}

\begin{example}\label{ex:r7}
Let $M=\R^8\subseteq\R_7$ be the paravector subspace of the Clifford algebra $\R_7$. 
It holds $c_m=-3$. Consider the slice-regular power $f(x)=x^4$ of the Clifford variable restricted to $\R^8$.  The 3-harmonic decomposition of Corollary \ref{cor:PZDec} is
\[
x^4=h_1(x)-x^c h_2(x)\text{\quad  $\forall x\in\R^8\subseteq\R_7$},
\]
with zonal 3-harmonics  
\begin{align*}
h_1(x)&=-\tfrac13\dibar(x^4)=5x_0^4-10x_0^2|\IM(x)|^2+|\IM(x)|^4,\\
h_2(x)&=-\tfrac13\dibar(x^3)=4x_0(x_0^2-|\IM(x)|^2).
\end{align*}
in $\R^8$. The decomposition of Theorem \ref{thm:dDeltaDec} is $x^4=f_0+|x|^2f_1+|x|^4f_2$, with 
\begin{align*}
f_0(x)&=\tfrac25\left(7x_0^4-12x_0^2|\IM(x)|^2+|\IM(x)|^4+14x_0^3\IM(x)-6x_0|\IM(x)|^2\IM(x)\right)
,\\
f_1(x)&=\tfrac1{10}\left(-19x_0^2+5|\IM(x)|^2-16x_0\IM(x)\right),\\
f_2(x)&=\tfrac 1{10}
\end{align*}
satisfying $\dibar\Delta f_k=0$ for $k=0,1,2$. Here $\Delta=\Delta_8$. The function $g=f_0+f_1+f_2$ solves \eqref{BVP1}, while the pair $(g,v)$, with $v=v_0+v_1+v_2=\frac45 x_0\left(7x_0^3-3|\IM(x)|^2-2\right)$, is the unique solution of the boundary problem \eqref{BVP2}. The $\dibar\Delta$-Fueter mapping $\Fdd$ sends $f$ to the triple of monogenic functions
\[
\Fdd(f)=(\Delta f_0,\Delta f_1,\Delta f_2)=\left(-\tfrac{24}5(7x_0^2-|\IM(x)|^2+2x_0\IM(x)),\,\tfrac{16}5,\,0\right).
\] 
Observe that $f$, and then also the $f_k$'s and $\Delta(f_k)$'s, are slice-preserving. In particular, all non-paravector components vanish on $M$. In the general case, these functions can have all the $2^7$ real components different from zero.
\end{example}

\subsubsection*{Octonions}
Let $A=M=\oo$ be the algebra of octonions. We adopt here the algebra isomorphism $\oo\simeq\hh\oplus\hh$ given by the Cayley-Dickson construction, with multiplication $(q_1,q_1')(q_2,q_2')=(q_1q_2-\overline{q_2'}q_1',q_2'q_1+q_1'\overline{q}_2)$ and conjugation $(q,q')^c=(\overline{q},-q')$. 
If $x=(q_1,q_1')\in\oo$, we can take $|x|=(xx^c)^{1/2}=(|q_1|^2+|q_1'|^2)^{1/2}$ as the norm $\|x \|$ on $\oo$.

As in \cite{DentoniSce}, let $\B$ be the orthonormal basis of $\oo\simeq\R^8$ formed by elements $e_h=(i_h,0)$ for $h=0,1,2,3$ and $e_h=(0,i_{4-h})$ for $h=4,5,6,7$,  where $(i_0=1,i_1,i_2,i_3)$ is the standard basis of $\hh$. In particular, $e_0=1$ is the unity of the algebra. Then the operator $\dB$ is the octonionic Cauchy-Riemann operator (firstly introduced under the name  \emph{Fueter-Moisil operator} by Dentoni and Sce in \cite{DentoniSce}) 
\[
\dibar=\frac12\left(\dd{}{x_0}+e_1\dd{}{x_1}+\cdots +e_7\dd{}{x_7}\right),
\]
where $x_0,\ldots,x_7$ are the real coordinates w.r.t.\ $\B$, and $\Delta_\B$ is the Euclidean Laplacian $\Delta=\Delta_{8}$ of $\R^8$. 
Octonionic slice-regular functions were introduced in \cite{GeStRocky}. We refer to that paper for definitions and properties. 
Theorem \ref{thm:dDeltaDec} and Corollary \ref{cor:DDeltaDecPoly} in this setting take the following form.

\begin{theorem}[Octonionic $\dibar\Delta$-decomposition]\label{thm:dDeltaDecO}
Let $f\in\sr(\OO)$, with $\OO$ an axially symmetric open subset of $\oo$.  Assume that $\OO$ is a star-like domain with centre 0.
Then there exist three octonionic slice functions $f_0,f_1,f_2$ on $\OO$ in the kernel of the operator $\dibar\Delta$ such that
\[
f(x)=f_0(x)+|x|^2f_1(x)+|x|^4f_2(x)\text{\quad  $\forall x\in\OO$.}
\]
In particular, if $f=\sum_{n=0}^Nx^na_n\in \oo[x]$ is a polynomial with octonionic coefficients, then  
\[f_k:=-\tfrac14\big(\Pi_k(\dibar(xf))-\overline x\,\Pi_k(\dibar f)\big) \text{\quad for $k=0,1,2$}\] 
is an octonionic polynomial slice function in $x_0,\ldots, x_7$ such that $\dibar\Delta f_k=0$ and 
\[
f(x)=f_0(x)+|x|^2f_1(x)+|x|^4f_2(x)\text{\quad  $\forall x\in\oo$.}
\]
If $\deg(f)=N$, then for $k=0,1,2$ it holds $\deg(f_k)=N-2k$ or $f_k\equiv0$. The $\dibar\Delta$-Fueter mapping sends $f$ to the sequence of axially left-monogenic polynomial functions $\Fdd(f)=(g_0,g_1,g_2)$, with $g_k=\frac43\,\partial\Pi_k(\dibar f)=-4\,\partial_\B\Pi_k(\sd f)\in\ker\dibar$ for $k=0,1,2$. 
\end{theorem}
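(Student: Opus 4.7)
The plan is to specialize the general $\dibar\Delta$-decomposition (Theorem \ref{thm:dDeltaDec}) and its polynomial refinement (Corollary \ref{cor:DDeltaDecPoly}) to the case $A = M = \oo$. Since $\oo$ itself is a hypercomplex subspace of dimension $8$, we have $m = \dim(M) - 1 = 7$, which is odd and $\ge 3$; moreover, because $\Q_\oo = \oo$ (as recalled just after Proposition~1 of \cite{AIM2011} in the excerpt), the $M$-admissibility hypothesis is vacuous for every octonionic function. The index range $0 \le k \le (m-3)/2 = 2$ produces exactly three summands $f_0 + |x|^2 f_1 + |x|^4 f_2$, with each $f_k \in \ker(\dibar\Delta)$ by Theorem \ref{thm:dDeltaDec}.

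For the polynomial part, I would apply Corollary \ref{cor:DDeltaDecPoly} verbatim with $m = 7$, noting that $\cS = (1-m)/2 = -3$, so the general formula
\[
f_k = -\tfrac{2}{m-1}\bigl(\Pi_k(\dB(xf)) - x^c\,\Pi_k(\dB f)\bigr)
\]
yields the closed form for each octonionic component. The property $\dibar\Delta f_k = 0$, the degree bound $\deg(f_k) = N - 2k$ (when $f_k \not\equiv 0$), and the fact that every $f_k$ is a slice function in the eight real variables $x_0, \dots, x_7$ all transfer directly from the general statement. A brief observation is needed to confirm that for a polynomial $f$ of degree $N$ the harmonic projections $\Pi_k$ automatically vanish on $h_1 := \sd{(xf)}$ and $h_2 := \sd f$ for $k > 2$, which makes the expansion finite.

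The identification of the $\dibar\Delta$-Fueter mapping $\Fdd(f) = (g_0, g_1, g_2)$ with $g_k = \tfrac43\,\partial\Pi_k(\dibar f)$ comes from computing $\Delta_\B f_k$: since $f_k = u_k - x^c v_k$ with $u_k = \Pi_k(h_1)$ and $v_k = \Pi_k(h_2)$ both harmonic, and since $\Delta_\B(x^c v_k) = 4\partial_\B v_k$ (the identity already used in the proof of Theorem \ref{thm:dDeltaDec}), one gets $\Delta_\B f_k = -4\partial_\B v_k$. Substituting $v_k = \cS^{-1}\Pi_k(\dibar f) = -\tfrac13 \Pi_k(\dibar f)$ produces the stated formula, and the axial left-monogenicity of each $g_k$ follows from Proposition \ref{pro:gamma_slice}(d) applied to the slice-regular $f_k$, since $\dB(\Delta_\B f_k) = \Delta_\B(\dB f_k) = 0$ forces $g_k \in \ker\dibar$.

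There is essentially no new difficulty: the whole argument is a substitution $m = 7$ into the general framework, with the simplification that $M$-admissibility is automatic. The main point worth flagging is the bookkeeping of the normalising constant $\cS = -3$, which is responsible for the specific rational coefficients appearing in the octonionic formulas.
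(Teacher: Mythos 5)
Your proposal is correct in method and is exactly the paper's own route: the paper offers no separate proof of Theorem \ref{thm:dDeltaDecO}, presenting it purely as the specialization of Theorem \ref{thm:dDeltaDec} and Corollary \ref{cor:DDeltaDecPoly} to $A=M=\oo$ (so $m=7$, and the $M$-admissibility hypotheses are vacuous since $\Q_{\oo}=\oo$).

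The one substantive point you should not have passed over silently is the normalizing constant. With $m=7$, Corollary \ref{cor:DDeltaDecPoly} gives $f_k=-\tfrac{2}{m-1}\bigl(\Pi_k(\dibar(xf))-\overline x\,\Pi_k(\dibar f)\bigr)$ with $-\tfrac{2}{m-1}=-\tfrac13=c_m^{-1}$, whereas the statement you are proving displays $-\tfrac14$. Your own treatment of the Fueter components is consistent with $-\tfrac13$: you set $v_k=\cS^{-1}\Pi_k(\dibar f)=-\tfrac13\Pi_k(\dibar f)$ and recover $g_k=\tfrac43\,\partial_\B\Pi_k(\dibar f)=\tfrac{8}{m-1}\,\partial_\B\Pi_k(\dibar f)$, which matches the general corollary. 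So the substitution you describe cannot produce the coefficient $-\tfrac14$; that coefficient is a misprint in the statement (the correct value is $-\tfrac13$), and a careful write-up should derive $-\tfrac13$ and flag the discrepancy rather than assert that the general formula ``yields the closed form'' as printed. Everything else --- the index range $k=0,1,2$, the sliceness and degrees of the $f_k$, and $g_k\in\ker\dibar$ obtained from $\dB\Delta_\B f_k=0$ --- transfers exactly as you describe.
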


\begin{example}
For $i=1,\ldots,7$, let $Z_i=x_i-x_0e_i$ be the octonionic Fueter polynomials, and consider the axially left-monogenic polynomial
\[
g_0(x)=\sum_{i=1}^7Z_i^2=-7x_0^2+x_1^2+\cdots+x_7^2-2x_0(x_1e_1+\cdots+x_7e_7)
\]
(an example taken from \cite{Krausshar2021}).
The function $g_0$ is the slice function on $\oo$ induced by the stem function $G_0(z)=G_0(\alpha+i\beta)=-7\alpha^2+\beta^2-2\ui\alpha\beta$. A direct computation using formula \eqref{eq:pik} shows that the triple $(g_0,g_1,g_2)$, where $g_1(x)=2/3$ and $g_2(x)=0$, is in the image of the $\dibar\Delta$-Fueter mapping, namely, 
\[\Fdd\big(\tfrac5{24}x^4\big)=(g_0,g_1,g_2).\]
Since $\oo$ has the same dimension of the paravector space of $\R_7$ ($m=7$ in both cases) and the function $x^4$ is slice-preserving, the $\dibar\Delta$-decomposition of $x^4$ has the same form as the one obtained in the case of the Clifford algebra $\R_7$ (see the previous Example \ref{ex:r7}). 
\end{example}

\subsubsection*{Reduced quaternions}
Let $M=\R^3\subseteq\R_2$ be the paravector subspace of the Clifford algebra $\R_2\simeq\hh$. $M$ can be identified with the set of \emph{reduced quaternions}, i.e., the quaternions of the form $x=x_0+ix_1+jx_2$, with $x_0,x_1,x_2\in\R$. 
If $\B=(1,i,j)$, the operator $\dB$ is the Cauchy-Riemann operator of $\R_2$
\[\dibar=\frac12\left(\dd{}{x_0}+i\dd{}{x_1}+j\dd{}{x_2}\right)\]
and $\Delta_\B$ is the Euclidean Laplacian $\Delta_3$ of $\R^{3}$. 

\begin{example}
Since $m=2$, it holds $c_m=-1/2$. 
Consider again the power $x^3$ of the Clifford (or reduced quaternion) variable $x=x_0+x_1 e_1+x_2 e_2$ on $\R^3$.   Corollary \ref{cor:zonal_decomposition} gives the zonal decomposition
\[
x^3=h_1(x)-\overline x h_2(x)\text{\quad  $\forall x\in\R^3$},
\]
with $h_1,h_2$ given by 
\begin{align*}
h_1(x)&=-2\dibar(x^4)=4x_0(x_0^2-x_1^2-x_2^2)
,\quad\text{and}\\
h_2(x)&=-2\dibar(x^3)=3x_0^2-x_1^2-x_2^2,
\end{align*}
while the decomposition $x^3=g_1(x)-\overline x g_2(x)$ of Proposition \ref{pro:AMDec} has components
\begin{align*}
g_1(x)&=-\tfrac1{8}\Delta(x^5)=\tfrac52\left(x_0^3-x_0(x_1^2+x_2^2)+x_0^2(x_1 e_1+x_2 e_2)\right)-\tfrac12(x_1^2+x_2^2)(x_1 e_1+x_2 e_2)
,\quad\text{and}\\
g_2(x)&=-\tfrac1{8}\Delta(x^4)=\tfrac12\left(3x_0^2-x_1^2-x_2^2+2x_0(x_1 e_1+x_2 e_2)\right)
,
\end{align*}
where $\Delta=\Delta_3$. 
Note that in the exceptional case ($m=2$) both decompositions give functions in the kernel of the fractional Laplacian $(-\Delta)^{1/2}$ on $\R^3$. 
\end{example}

If one takes instead the hypercomplex subspace $M'$ of $\hh$ generated by $1,j,k$, with basis $\mathcal C=(1,-k,j)$, then $\dibar_{\mathcal C}=\frac12\left(\partial_0-k\partial_1+j\partial_2\right)$ is a multiple of the \emph{Moisil-Teodorescu operator} \cite{MT1931} in the variables $x_0,x_1,x_2$
\[
\mathcal D_{MT}=i\dd{}{x_0}+j\dd{}{x_1}+k\dd{}{x_2}=2i\dibar_{\mathcal C}.
\]

\section*{Appendix: proofs of Propositions \ref{pro:powers}, \ref{pro:gamma} and Theorem \ref{teo:difference}}\label{sec:App}

\begin{proof}[Proof of Proposition \ref{pro:powers}]
We prove (a) by induction on $n$. For $n=0,1$ the equality is true, since $\sd{x}=1$. Assume that (a) holds for the power $x^{n-1}$, with $n\ge2$. Then
\begin{align*}
2\dB(x^n)&=\textstyle\sum_iv_i\partial_ix^n=\sum_iv_i\left((\partial_i x^{n-1})x+x^{n-1}(\partial_ix)\right)\\
&=\textstyle\left(\sum_iv_i\partial_i x^{n-1}\right)x+\sum_i v_i\left(x^{n-1}(\partial_ix)\right)
=2(\dB(x^{n-1}))x+\sum_i v_i\left(x^{n-1}v_i\right),
\end{align*}
where we used Artin's Theorem to get $v_i\left((\partial_i x^{n-1})x\right)=\left(v_i\partial_i x^{n-1}\right)x$, since $\partial_i x^{n-1}\in\cc_I$ for every $x\in\cc_I$. By the inductive hypothesis,
\[
2\dB(x^n)=(1-m)\sd{(x^{n-1})}\,x+x^{n-1}+\sum_{i=1}^mv_ix^{n-1}v_i.
\]
Since $\vs{(x^{n-1})}$ and $\sd{(x^{n-1})}$ are real-valued, the last term is equal to
\begin{align*}
&\sum_{i=1}^mv_ix^{n-1}v_i=\sum_{i=1}^mv_i\left(\vs{(x^{n-1})}+\IM(x)\sd{(x^{n-1})}\right)v_i=-m\vs{(x^{n-1})}+\sd{(x^{n-1})}\sum_{i=1}^mv_i \IM(x)v_i\\
&=-m\vs{(x^{n-1})}+\sd{(x^{n-1})} (m-2)\IM(x)
\end{align*}
for every $x=x_0+x_1v_1+\cdots+x_mv_m\in M$. Therefore
\begin{align*}
2\dB(x^n)&=(1-m)\sd{(x^{n-1})}\,x+x^{n-1}-m\vs{(x^{n-1})}+\sd{(x^{n-1})} (m-2)\IM(x)\\
&=(1-m)\left(\sd{(x^{n-1})}\,x+\vs{(x^{n-1})}-\sd{(x^{n-1})}\IM(x)\right).
\end{align*}
On the other hand, $\sd{(x^n)}=\sd{(x^{n-1})}\vs{(x)}+\vs{(x^{n-1})}\sd{(x)}=\sd{(x^{n-1})}\,x_0+\vs{(x^{n-1})}=
\sd{(x^{n-1})}\,x-\sd{(x^{n-1})}\,\IM(x)+\vs{(x^{n-1})}$ and the first equality in (a) is proved. The second equality is immediate from the definition of $\sd{(x^n)}$ as $(2\IM(x))^{-1}(x^n-(x^c)^n)=(x-x^c)^{-1}(x^n-(x^c)^n)$.

We now prove (b).
Since $A$ is alternative and $v_iv_j+v_jv_i=0$ for all $1\le i,j\le m$, $i\ne j$, then
\[
v_i\partial_i(v_j\partial_jf)+v_j\partial_j(v_i\partial_if)=v_i(v_j(\partial_i\partial_jf))+v_j(v_i(\partial_j\partial_if))=-(v_i,v_j,\partial_i\partial_jf)-(v_j,v_i,\partial_i\partial_jf)=0.
\]
Moreover, $\partial_0(v_i\partial_if)-v_i(\partial_i\partial_0f)=0$ for every $i\ge1$. 
It follows that 
\[
4\partial_\B\dB=4\dB\partial_\B=\partial_0(\partial_0 f)-\sum_{i=1}^m v_i\partial_i(v_i\partial_if)=\sum_{i=0}^m \partial_i(\partial_if)=\Delta_\B f,
\]
\end{proof}

\begin{proof}[Proof of Proposition \ref{pro:gamma}]
To prove point (a) of the Proposition, we compute
\[\GB(x)=-\tfrac12\sum_{i,j=1}^mv_i(v_j(x_iv_j-x_jv_i))
=\tfrac12\sum_{i\ne j}x_iv_i+\tfrac12\sum_{i\ne j}x_jv_j=(m-1)\IM(x),
\]
where we used Artin's Theorem. We now prove point (b). A direct computation shows that the equalities
\begin{equation}\label{eq:lij}
L_{ij}(xf)=(L_{ij}x)f+x\,L_{ij}f
\end{equation}
hold true on $M$ for every $i,j$ and every $f$. This implies that on $M$
\begin{equation}\label{eq:gbxf}
\GB(xf)=-\tfrac12\sum_{i,j}v_i(v_j L_{ij}(xf))=-\tfrac12\sum_{i,j}v_i(v_j((L_{ij}x)f(x)))-\tfrac12\sum_{i,j}v_i(v_j(xL_{ij}f)).
\end{equation}
Since $L_{ij}x=x_iv_j-x_jv_i$, using the first Moufang identity \eqref{moufang1} and (a) we get that the first sum in the last term of \eqref{eq:gbxf} is equal to 
\[
-\tfrac12\sum_{i,j}v_i(v_j((x_iv_j-x_jv_i)))f(x)=\GB(x)f(x)=(m-1)\IM(x)f(x).
\]
Now we consider the second sum in last term of \eqref{eq:gbxf}. We have
\begin{align*}
&\tfrac12\sum_{i,j}v_i(v_j(xL_{ij}f))+x^c\,\GB f\\
&=\tfrac12\sum_{i,j}v_i(v_j((x_0+\sum_{k=1}^m x_kv_k)(x_i\dds jf-x_j\dds if)))-
(x_0-\sum_{k=1}^mx_kv_k)\,\tfrac12\sum_{i,j} v_i(v_j(x_i\dds jf-x_j\dds if))\\
&=\tfrac12\sum_{i\ne j}\sum_{k=1}^m v_i(v_j((x_kv_k)(x_i\dds jf-x_j\dds if)))+
\tfrac12\sum_{k=1}^mx_kv_k\,\sum_{i,j} v_i(v_j(x_i\dds jf-x_j\dds if)).
\end{align*}
In the last expression we can assume that the index $k$ is different from $i$ and $j$, since from Artin's Theorem and the first Moufang identity \eqref{moufang1}, it follows that for every $i\ne j$ and every $a\in A$, it holds
\begin{equation}\label{eq:iji}
v_i(v_j(v_ia))=(v_iv_jv_i)a=v_ja=-v_i(v_i(v_ja))
\end{equation}
and
\begin{equation}\label{eq:ijj}
v_i(v_j(v_ja))=-v_ia=-(v_jv_iv_j)a=-v_j(v_i(v_ja)).
\end{equation}
Therefore
\begin{align}\label{eq:zero}
&\tfrac12\sum_{i,j}v_i(v_j(xL_{ij}f))+x^c\,\GB f\notag\\\notag
&=\tfrac12\sum x_k v_i(v_j(v_k(x_i\dds jf)))-\tfrac12\sum x_k v_i(v_j(v_k(x_j\dds if)))\\\notag
&\quad +\tfrac12\sum x_kv_k (v_i(v_j(x_i\dds jf)))-\tfrac12\sum x_kv_k (v_i(v_j(x_j\dds if)))\\
&=\tfrac12\sum\nolimits' x_ix_k\left(v_i(v_j(v_k\dds j f))-v_j(v_i(v_k\dds jf))+
v_k(v_i(v_j\dds jf))-v_k(v_j(v_i\dds jf))
\right)
\end{align}
where the symbol $\sum\nolimits'$ indicates that the sum is made over three mutually different indices $i,j,k$ in the set $\{1,\ldots,m\}$. 
If $A$ is associative, the terms in \eqref{eq:zero} are all equal and the sum reduces to
\[2\sum\nolimits' x_ix_k v_k v_i v_j\dds jf
\]
which is zero by antisymmetry w.r.t.\ the indices $i$ and $k$. 

We now consider the general case.
The first and the last terms in \eqref{eq:zero} cancel out in the sum, since they are symmetric w.r.t.\ the indices $i$ and $k$. 
In order to show that also the other terms vanish, we first prove that for every $x\in M$, it holds
\begin{equation}\label{eq:ik}
v_i(v_k x)=-v_k(v_i x)\text{\quad if $1\le i,k\le m, i\ne k$}.
\end{equation}
Let $x=x'+x''$, with $x'\in\Span(v_i,v_k)$, $x''\in\Span(v_i,v_k)^\bot\cap M$. Then
\[
v_i(v_kx')=-x'(v_iv_k)=x'(v_kv_i)=-v_k(v_ix')
\]
thanks to Artin's Theorem, as in  \eqref{eq:iji} and \eqref{eq:ijj} with $a=1$. Since $\Span(v_i,v_k)^\bot\cap M$ is generated as a real vector subspace by $\{1,v_1,\ldots,v_m\}\setminus\{v_i,v_k\}$, it suffices to prove that
\[
v_i(v_kv_j)=-v_k(v_iv_j)
\]
for every $j\in\{1,\ldots,m\}$, $j\ne i,k$, $i\ne k$. Using the alternating property of the associator $(a,b,c)=(ab)c-a(bc)$ in $A$, we get
\[
v_i(v_kv_j)+v_k(v_iv_j)=(v_iv_k)v_j-(v_i,v_k,v_j)+(v_kv_i)v_j-(v_k,v_i,v_j)=0.
\]
Since $\dds jf(x)\in M$ for every $j$, from \eqref{eq:ik} it follows that also the second sum in \eqref{eq:zero} vanishes. 
It remains to consider the third sum in \eqref{eq:zero}, i.e.,
\begin{equation}\label{eq:third}
\tfrac12\sum_{i\ne k} x_ix_k v_k(v_i(\sum_{\stackrel{j=1}{j\ne i,k}}^m v_j\dds jf))=\sum_{i\ne k} x_ix_k v_k(v_i(\dB f-\tfrac12\dds 0f-\tfrac12 v_i\dds if-\tfrac12 v_k\dds kf)).
\end{equation}
In view of \eqref{eq:ik}, since by assumption $\dB f(x)$ and $\dds 0f(x)$ belong to $M$ for every $x\in\OO$, the sum 
\[\sum_{i\ne k} x_ix_k v_k(v_i(\dB f-\tfrac12\dds 0f))
\]
vanishes. Moreover, using again \eqref{eq:iji}  and \eqref{eq:ijj}, we get
\[
\sum_{i\ne k} x_ix_k v_k(v_i(v_i\dds if+ v_k\dds kf))=\sum_{i\ne k} x_ix_k (-v_k\dds if+ v_i\dds kf)=0
\]
and this concludes the proof of (b) also in the non-associative case.
\end{proof}

We now come to the proof of Theorem \ref{teo:difference}:

\begin{proof}[Proof of Theorem \ref{teo:difference}]
From definitions, on $\OO\setminus\R$ it holds
\[
2(\dB f-\difbar f)=\sum_{i=1}^m v_i\dds if-\frac{\IM(x)}{n(\IM(x))} \sum_{i=1}^m x_i\partial_if.
\]
We compute
\begin{align*}
&\IM(x)\left(\sum_{i=1}^m x_i\partial_if+\Gamma_\B f\right)=\sum_{k=1}^mx_kv_k\sum_{i=1}^m x_i\partial_if-\tfrac12\sum_{k=1}^mx_kv_k\sum_{i,j}v_i(v_j(L_{ij}f))\\
&=\sum_{i,k}x_kx_iv_k\dds if-\tfrac12 \sum_{i,j,k}x_kx_i v_k(v_i(v_j\dds jf))+\tfrac12 \sum_{i,j,k}x_kx_j v_k(v_i(v_j\dds if))\\
&=\sum_{i,k}x_kx_iv_k\dds if+\tfrac12 \sum_{k,j}x_k^2 v_j\dds jf+\tfrac12 \sum_{\stackrel{i,j,k}{i\ne j}}x_kx_j v_k(v_i(v_j\dds if))-\tfrac12 \sum_{i,k}x_kx_i v_k\dds if\\
&=\sum_{i,k}x_kx_iv_k\dds if+\tfrac12 \sum_{k}x_k^2\sum_j v_j\dds jf+\tfrac12 \sum_{\stackrel{i,j,k}{i\ne j,j\ne k}}x_kx_j v_k(v_i(v_j\dds if))+\tfrac12 \sum_{\stackrel{i,k}{i\ne k}}x_k^2 v_k(v_i(v_k\dds if))\\
&\quad-\tfrac12 \sum_{i,k}x_kx_i v_k\dds if\\
&=\sum_{i,k}x_kx_iv_k\dds if+\tfrac12 \sum_{k}x_k^2\sum_j v_j\dds jf+\tfrac12 \sum\nolimits' x_kx_j v_k(v_i(v_j\dds if))+
\tfrac12\sum_{i\ne j} x_ix_j v_i(v_i(v_j\dds if))\\
&\quad+\tfrac12 \sum_{\stackrel{i,k}{i\ne k}}x_k^2 v_i\dds if-\tfrac12 \sum_{i,k}x_kx_i v_k\dds if
\end{align*}
\begin{align*}
&=\tfrac12\sum_{i,k}x_kx_iv_k\dds if+\sum_{k}x_k^2\sum_j v_j\dds jf-\tfrac12 \sum\nolimits'x_kx_j v_k(v_j(v_i\dds if))-
\tfrac12\sum_{i\ne j} x_ix_j v_j\dds if\\
&\quad-\tfrac12 \sum_{i}x_i^2 v_i\dds if\\
&=n(\IM(x))\sum_j v_j\dds jf-\tfrac12 \sum\nolimits' x_kx_j v_k(v_j(v_i\dds if)),
\end{align*}
where the symbol $\sum\nolimits'$ denotes a sum over all distinct indices $i,j,k$ in the set $\{1,\ldots,n\}$.

If $A$ is associative, then the sum $\sum\nolimits' x_kx_j v_k(v_j(v_i\dds if))$ vanishes, since it is antisymmetric w.r.t.\ the indices $j$ and $k$. In the general case, the vanishing of this sum follows from the same arguments used  in Proposition \ref{pro:gamma} when it was proved that the sum \eqref{eq:third} is zero. 
Therefore we have
\[
\frac{\IM(x)}{n(\IM(x))}\left(\sum_{i=1}^m x_i\partial_if+\Gamma_\B f\right)=\sum_{j=1}^m v_j\dds jf\text{\quad on $\OO\setminus\rr$}
\]
and the thesis
\[
2(\dB f-\difbar f)=\sum_{i=1}^m v_i\dds if-\frac{\IM(x)}{n(\IM(x))} \sum_{i=1}^m x_i\partial_if=\frac{\IM(x)}{n(\IM(x))} \Gamma_\B f=(\IM(x))^{-1}\Gamma_\B f.
\]
\end{proof}

\section*{Aknowledgments}
The author is a member the INdAM Research group GNSAGA and was supported by the grants ``Progetto di Ricerca INdAM, Teoria delle funzioni ipercomplesse e applicazioni'', and PRIN ``Real and Complex Manifolds: Topology, Geometry and holomorphic dynamics''.


\end{document}